\documentclass[11pt]{article}
\usepackage{color,amsmath, amsthm, amsfonts, amssymb, dsfont, graphicx, listings, graphics, epsfig, enumerate, bbm,dsfont,stmaryrd
}

\usepackage[nodisplayskipstretch]{setspace}
\doublespacing
\usepackage{etoolbox}


\makeatletter
\def\thm@space@setup{%
	\thm@preskip=0ex 
	\thm@postskip=\thm@preskip 
}
\makeatother

\usepackage{titlesec} 
\titlespacing{\section}{0pt}{2ex}{1ex}
\titlespacing{\subsection}{0pt}{1ex}{0ex}
\titlespacing{\subsubsection}{0pt}{0.5ex}{0ex}
\setlength{\belowcaptionskip}{-12pt} 
\usepackage{multirow}

\usepackage[margin=1in]{geometry}
\usepackage[fleqn,tbtags]{mathtools}
\usepackage[english]{babel}

\usepackage[round]{natbib}   
\setcitestyle{aysep={}} 
\setcitestyle{citesep={,}} 

\usepackage{mathrsfs}  
\usepackage{comment} 
\usepackage{enumitem} 
\setlist[itemize]{noitemsep, topsep=0pt} 
\setlist[enumerate]{noitemsep, topsep=0pt} 
\usepackage{color,soul} 


\usepackage{braket} 
\usepackage{xcolor}
\usepackage{accents} 
\usepackage[font={small}]{caption} 
\usepackage{booktabs} 
\usepackage{algorithm} 
\usepackage{algorithmic} 
\usepackage{tikz} 
\usepackage{proof-at-the-end}
\usepackage{placeins} %
\usepackage{slashbox} 
\usepackage{diagbox} 

\usepackage{versions}
\excludeversion{dem} 
\allowdisplaybreaks 


\addto\captionsenglish {}
\makeatletter \renewenvironment{proof}[1][\proofname] {\par\pushQED{\qed}\normalfont\topsep6\p@\@plus6\p@\relax\trivlist\item[\hskip\labelsep\bfseries#1\@addpunct{.}]\ignorespaces}{\popQED\endtrivlist\@endpefalse} \makeatother
\newtheorem{thm}{Theorem}[section]
\newtheorem{cor}[thm]{Corollary}
\newtheorem{lem}[thm]{Lemma}
\newtheorem{prop}[thm]{Proposition}

\theoremstyle{definition}

\newtheorem{rem}[thm]{Remark}


\numberwithin{equation}{section}


\newcommand*{\dt}[1]{%
	\accentset{\mbox{\large\bfseries .}}{#1}}

\newlength{\dhatheight}

\newcommand*\circled[1]{\tikz[baseline=(char.base)]{
			\node[shape=circle,draw,inner sep=1.5pt] (char) {#1};}} 

\newcommand{\EE}{\mathbb{E}}
\newcommand{\RR}{\mathbb{R}}
\newcommand{\PP}{\mathbb{P}}
\newcommand{\NN}{\mathbb{N}}
\newcommand{\ZZ}{\mathbb{Z}}
\newcommand{\FF}{\mathbb{F}}
\newcommand{\TT}{\mathbb{T}}

\newcommand{\II}{\mathbb{I}}

\newcommand{\cC}{\mathcal{C}}

\newcommand{\cF}{\mathcal{F}}

\newcommand{\cH}{\mathcal{H}}

\newcommand{\cT}{\mathcal{T}}


\newcommand{\tG}{\widetilde{G}}
\newcommand{\tV}{\widetilde{V}}
\newcommand{\tS}{\widetilde{S}}
\newcommand{\tJ}{\widetilde{J}}
\newcommand{\tpi}{\widetilde{\pi}}

\newcommand{\E}{\mathbb{E}}

\newcommand{\of}[1]{\ensuremath{\left( #1 \right)}}
\newcommand{\cb}[1]{\ensuremath{ \left\{ #1 \right\} }}
\newcommand{\sqb}[1]{\ensuremath{ \left[ #1 \right] }}

\def\prehp(#1,#2){\ensuremath{  #1 \cdot #2 }}

\newcommand{\condition}[1]{\ensuremath{ \left. #1 \,\right|\, }} 

\title{End-of-Life Inventory Management Problem: \\ Results and Insights
}
\author{
Emin Ozyoruk \thanks{Booth School of Business, University of Chicago, Chicago, Illinois, 60637, USA, eozyoruk@chicagobooth.edu.}
\and
Nesim Kohen Erkip \thanks{Department of Industrial Engineering, Bilkent University, Ankara, 06800, Turkey, nesim@bilkent.edu.tr.}
\and
\c{C}a\u{g}{\i}n Ararat\thanks{Department of Industrial Engineering, Bilkent University, Ankara, 06800, Turkey, cararat@bilkent.edu.tr.}
}

\begin{document}
\maketitle
\setlength{\belowdisplayskip}{1pt} \setlength{\abovedisplayskip}{1pt} 

\begin{abstract}
	\noindent We consider a manufacturer who manages the end-of-life phase and takes one of the three actions at each period: (1) place an order, (2) use existing inventory, (3) stop holding inventory and use an outside/alternative source. Two examples of this source are discounts for a new generation product and delegating operations. Demand is described by a non-homogeneous Poisson process, and the decision to stop holding inventory is described by a stopping time. After formulating this problem as an optimal stopping problem with additional decisions and presenting its dynamic programming algorithm, we use martingale theory to facilitate the calculation of the value function. Moreover, we show analytical results to understand the additional difficulties of the problem solved, as well as structural results on optimal stopping times. Furthermore, we devise an expandable taxonomy and categorize the models in the literature. Analytical insights from the models as well as an extensive numerical analysis show the value of our approach. The results indicate that the loss can be high in case the manufacturer does not exploit flexibility in placing orders or use an outside source. Several managerial insights are obtained through numerical analysis as well as structural results to facilitate decision-making during the end-of-life horizon.
\\
\\[-10pt]
\textbf{Keywords and phrases:} Management of end-of-life problem, end-of-life inventory management, optimal stopping with additional decisions, non-homogeneous Poisson process \\
\\[-10pt]
\end{abstract}

\section{Introduction and Literature Review}\label{intro}
\subsection{Motivation and Literature Review}
While rapid technological developments have been shortening the life-cycle of products sold in the market, competition and customer satisfaction have made the firms increase the warranty periods of those products. To fix a product in case of failures, a firm holds spare parts inventory for long periods and even after the product is no longer produced. This leads to a challenging problem of inventory management of spare parts in the end-of-life phase – a time frame within the product’s life-cycle that begins when the product is no longer produced and that ends at the expiration date of all customers’ warranties  \citep{fortuin1980all}. Original equipment manufacturers strive to properly manage the inventory in the end-of-life phase since spare parts are held for long periods although the demand for them can be quite low. For instance, in electronics industry, a manufacturer may need to keep spare parts for 4-30 years after the product is discontinued from production \citep{teunter2002inventory}. It might seem tempting to pile up abundant inventory to obey customer warranties; however, this may result in excessive holding and scrapping costs given that the demand is expected to be low. Indeed, HP suffered from huge obsolescence costs due to end-of-life write-offs \citep{callioni2005inventory}, and in general, after-sales services can be a significant source of profit for the firms \citep{shi2019optimal}. As a result, several strategies have been developed to control inventory and mitigate the risk of over- and under-stocking of spare parts in the end-of-life phase. 

Early approaches for inventory control in the end-of-life phase attempt to use classical inventory models while aiming to calibrate the parameters pertinent to this phase. For instance, \citet[p.~363, Subsection\@ 8.5.1]{silver2016inventory} review the studies that develop extensions of the economic order quantity (EOQ) model while assuming a deterministic and decreasing demand rate. Those studies find the number of replenishments to make as well as the timing and sizes of these replenishments. Simultaneously, several studies are motivated by the intermittent demand structure in this phase, devising inventory models with stochastic demand. Extensions of the newsvendor model, for example, are developed where the parameters (e.g., mean and standard deviation of demand) are estimated from available data. Such studies are reviewed by \citet[p.~364, Subsection 8.5.2]{silver2016inventory} as well. 

The practically oriented approaches often assume that the original equipment manufacturer can place a single order at the beginning of the end-of-life phase, and they propose complementary business strategies. The motivation behind the single-order assumption is that a component manufacturer might decide to stop producing certain spare parts, thereby requiring the original equipment manufacturer to place a final order. This final order is also called last-time buy, final buy, end-of-life buy, or buy all-time requirements. On the other hand, complementary strategies aim to support the final buy in case of a discrepancy between the realized demand and the order quantity. The wide literature on business strategies complementing a final order includes, but is not limited to, repairing defective spare parts collected from customers \citep{behfard2015last,behfard2018last} while repairing may not be feasible for some of them \citep{van2009final}, buying back functional or dysfunctional used products to take them apart and obtain the recoverable spare parts \citep{pourakbar2014Phaseout,kleber2012dynamic}, considering budget constraints \citep{hur2018end} or multiple spare parts in the bill-of-materials of a main product \citep{bradley2009lifetime}, extending customer contracts \citep{pincce2015role, leifker2014determining}, designing a new product to replace the obsolete one (design refresh) \citep{shen2014modeling,shi2020optimal}, partially scrapping spare parts in case of over-stocking \citep{pourakbar2012end}, differentiating customers based on demand criticality or service contracts \citep{pourakbar2012customer}, re-manufacturing \citep{shi2019optimal,bayindir2007assessing}, finding outside/alternative sources \citep{pourakbar2012end,frenk2019exact,van2013last,jack2000optimal}, and finally, obviating the need to place a final order at time zero \citep{cattani2003good,teunter2002inventory,pincce2011inventory}. Common to all the studies above is the fact that the end-of-life management problem considered is more than an inventory problem and hence several actions should be considered simultaneously.

In this study, we focus on the last two strategies described above. The benefit of a complementary strategy that finds an outside/alternative source, instead of holding spare parts inventory, can be two-fold. On the one hand, in case the demand for spare parts exceeds the available inventory, the manufacturer can start using the outside source as a back-up source and avoid underage costs. On the other hand, it can be used to get rid of excess inventory in case of insufficient demand, decreasing overage costs. Some examples of this outside/alternative source can be to purchase expedited spare parts supply from a third-party supplier, to replace the failed product with a new generation product \citep{pourakbar2012customer,frenk2019exact, van2013last,jack2000optimal}, or to substitute with another spare part having the same functionality \citep{shi2020optimal}. If the cost of such a source decreases over time (for instance, due to price erosion of a new generation product), then this strategy can become truly valuable. 

Within the literature incorporating an outside/alternative source, \citet{pourakbar2012end} consider a manufacturer who places a final order at time zero and can decide to use an outside/alternative source at each of the forthcoming time periods. \citet{frenk2019exact} assume that the manufacturer makes a static decision (at time zero) on the final order quantity and on the time to stop holding inventory (called switching time). \cite{frenk2019optimal} extends the model in \cite{frenk2019exact} with more general parameters and describe the decision to stop holding inventory by a stopping time, solving an optimal stopping problem by means of a dynamic programming algorithm. \cite{shi2020optimal} consider a design refresh program that substitutes an obsolete part with an alternative part, while modeling this problem as a two-stage stochastic dynamic program. 

We could find a few recent studies which analyze the benefit of providing flexibility in placing orders in the end-of-life phase. We consider two such flexibilities: timing of the final (or first) order, and having multiple orders. Regarding the first flexibility, it is reasonable to accept the existence of a time point at which the manufacturer places a final order. Still, such a time point may need to be found after completing an in-depth analysis since, after all, the component manufacturers might be willing to produce the spare parts as long as it is profitable to do so. Among the studies allowing flexibility in placing orders in the end-of-life phase, \citet{cattani2003good} analyze the effects of delaying a final order rather than placing it at time zero, and determine the optimal timing of the final buy from an aggregated supply chain perspective by including both the manufacturer and the supplier. They also characterize the benefit of delaying a final order under different demand scenarios. The second flexibility we consider is the possibility of multiple orders during the end-of-life horizon. There are numerous examples in the early and more recent literature -- here we present a few. \citet{inderfurth2008decision} devise a dynamic programming model to help manufacturers who can place extra production/procurement orders as well as remanufacture the recoverable spare parts. \citet{inderfurth2013advanced} further explore the problem studied by \citet{inderfurth2008decision}, and devise an advanced heuristic that provides near-optimal solutions and that can quickly solve real-life problem instances. \citet{teunter2002inventory} devise a continuous-time solution when demand is described by a Poisson process (with constant rate) and find an optimal base-stock policy where order-up-to levels decrease over a finite time horizon. \citet{pincce2011inventory} also provide a continuous-time formulation; their model mainly differs from \citet{teunter2002inventory} in that partial obsolescence is allowed, that is, intensity rate drops to a lower level at a known future time instance. Also see \citet{david1997dynamic} for a dynamic programming approach when demand is deterministic. 

Finally, we review the components of the objective functions considered. As expected, most of the objective functions of the above studies are related to costs but are varying in type and in the timing of charging costs. However, many of them assume that costs are charged at discrete time points \mbox{\citep{shi2020optimal}}. Additionally, many of them assume that there is no fixed cost of ordering (understandable since one ordering instance is allowed) \mbox{\citep{pourakbar2012end}}. Hence, having non-zero inventory at time zero would not have a significant effect on the solutions proposed. Many of those studies, by construction, assume lost-sales for the excess demand or the demand is satisfied from outside sources \mbox{\citep{frenk2019exact}}, though some of them allow for backordering and penalize both the time and units backordered \mbox{\citep{teunter2002inventory}}. Finally, some of the costs are assumed to be constant over time; exceptions are allowing for discounting, a decreasing cost of alternative source over time \mbox{\citep{frenk2019exact}}, or an increasing unit procurement cost after time zero \mbox{\citep{teunter2002inventory}}. 

In our approach, we consider an end-of-life problem where one makes use of the flexibilities considered above while properly considering their costs (fixed cost per order, as well as others) simultaneously.

\subsection{Novelty of the Approach and Contributions}

This study analyzes the value of providing flexibility in placing orders while making use of strategies related to the end-of-life phase. In more general terms, we consider a multi-period, lost-sales inventory problem where lost sales can be compensated by an outside source, as well as the outside source can become the main source if we decide so (modeled by a stopping time). The novelty of our study is the incorporation of all the following features during modeling and solution stages.
\begin{itemize}[leftmargin=*]
	\item \textit{Flexibility in ordering:} Instead of being required to place a single order at time zero, the manufacturer has the flexibility to place orders at any time and can limit the number of orders. Additionally, a time point can be found at which the manufacturer does not choose to place any orders afterward, i.e., the timing of the final order. All these aspects bring in several flexibilities: Allowing for multiple orders, chances of delaying the first order and limiting the number of orders.
	\item \textit{Strategic switch to an alternative/outside source:} The manufacturer can stop holding inventory and use an alternative/outside source to satisfy demand (strategically decide not to use internal sources). This source has a relatively high per-unit cost; however, it can be useful in avoiding excessive penalty and holding costs in the future. Also, the manufacturer does not need to put an effort into the use of this source. Such a source can be another option in cases where redesigning products or spare parts may cannibalize design resources that could otherwise be used for designing new products \citep{bradley2008product}. The manufacturer’s decision to stop holding inventory is described by a stopping time.
	\item \textit{Demand variability:} The demand for spare parts is described by a non-homogeneous Poisson process. We also assume a non-increasing intensity function to fit the problem description, though for most of our results such an assumption is not necessary. 
	\item \textit{Cost components:} We consider fixed ordering costs as well as non-stationary costs. Additionally, we compute expected costs when these are charged continuously (rather than at discrete time points). This allows us to naturally operate with periods which are not necessarily spaced equally in time. Another motivation for the continuous-time calculation of costs is that the manufacturer may not be able to review the inventory for long periods, so we may miss correct representation of costs. For instance, in our model, we describe the exact time that the inventory on hand hits zero and hence lost-sales is observed by using a stopping time.
	\item \textit{Decision structure and solution methodology:} The manufacturer's problem is to make one of the three decisions at each period: (1) place an order for spare parts, (2) do nothing and use existing inventory to satisfy demand, or (3) stop holding inventory permanently and use outside/alternative source until the end. We cast this combined inventory control and optimal stopping problem as an ``optimal stopping problem with additional decisions'' that can be solved by means of a stochastic dynamic programming (DP) algorithm (see \cite{oh2016characterizing} for the definition of optimal stopping problems with additional decisions). Note that DP might be considered as an expensive tool for solution for tactical and operational problems. However, most of the typical end-of-life problems are expected to warrant the cost of the solution approach -- which is to be paid once.
\end{itemize}

Below we summarize our main contributions to the end-of-life inventory management problem.
\begin{itemize}[leftmargin=*]
	\item We propose a flexible, expandable taxonomy for the end-of-life inventory management problem which facilitates grouping the existing studies based on their available decisions.
	\item We propose a general framework for the problem by combining several issues raised in the literature and solving it using dynamic programming. We numerically show its benefits over the other approaches.
	\item We allow continuous cost computations by using martingale theory to facilitate the calculation of the value function, allowing the decision maker to adjust the period definition, as well as the use of non-equal time periods without compromising the exactness of the cost terms computed.
	\item We provide some structural insights on the problem, as well as on some special cases. Specifically, we present the relation of the model with the inventory literature and utilize several properties of the optimal stopping time under various considerations to support decision-making during the horizon. Under the premise that an optimal strategy (obtained with stochastic DP) is implemented, we obtain the distribution of stopping times to further support communication with the outside source provider. 
	\item We discuss important managerial insights that will likely support decision-making throughout the life cycle of the problem. (1) Some of these insights follow extensive computations. Remarks are given on the parameter settings for which the flexibilities in the model bring substantial savings, allowing decision makers to focus. (2) Additional insights follow the structural results on stopping times -- mainly, these results may be shared with the supplier of the alternative source to ensure smooth transition of service operations to the supplier. (3) Finally, following numerical solutions, several managerial insights are proposed to control the parameter settings of the end-of-life problem. Specifically, the effects of controlling customer arrival rates, making monetary arrangements to support development of the alternative source and extending the warranty period to decrease the expected cost of the end-of-life period are considered so that the outcome becomes more attractive for the manufacturing firm.
\end{itemize}

The rest of this paper is organized as follows. Section \ref{Subsection: Representation of Expected Costs} formally defines the costs considered and their computations, whereas Section \ref{Subsection: D8F DP formulation} defines our problem and presents the main DP algorithm. Section \ref{Subsection: Taxonomy} presents a taxonomy for related problems. Section \mbox{\ref{Section: Structural Results}} provides the analytical results to run the DP algorithms as well as several structural results on stopping times. In Section \mbox{\ref{Section: Numerical Analysis}}, we provide extensive numerical computations to analyze the advantages of our approach, as well as to come up with several managerial insights. Finally, in the last section, we conclude, and present practical implications as well as possible extensions.

\section{Problem Definition and Details} \label{Section: Problem Definition}
Let $(\Omega, \cH, \PP)$ be a probability space and let $T\in[0,\infty)$. We start by assuming that the demand for spare parts is described by a non-homogeneous Poisson process $N = (N_t)_{t\in[0,T]}$ with an intensity function $\lambda\colon[0,T]\rightarrow\RR_+$ and the associated mean value function $t\mapsto \Lambda(t):=\int_{0}^{t}\lambda(u)du$. We assume that $\lambda$ is right-continuous with left-limits, piece-wise smooth (that is, differentiable except at finitely many points) and non-increasing. Most of the results in this study can be recovered without the assumption that $\lambda$ is non-increasing. Still, such assumption can be more appropriate to describe the demand for spare parts in the end-of-life phase. The manufacturer periodically reviews the inventory level and for brevity of notation, we assume that the lengths of time periods are identical. Our model can be easily adjusted for non-identical period lengths. At each time point $k\in\TT := \{0,1,\dots,T\}$, the manufacturer observes the current inventory level $x\in\ZZ_+:=\{0,1,\ldots\}$ and decides whether to stop or continue holding inventory. We sometimes refer the time interval $[k,k+1]$ as the $k^{\text{th}}$ period.

\subsection{Representation of Expected Costs}\label{Subsection: Representation of Expected Costs}
After observing inventory $x\in\ZZ_+$ at time $k\in\TT$, the manufacturer may decide to continue holding inventory. In this case, an order $\mu_k(x)\in \ZZ_+$ can be placed, where the function $\mu_k\colon\ZZ_+\rightarrow\ZZ_+$ specifies the order amount. The order cost function $c\colon\ZZ_+\rightarrow\RR_+$ is given by
\begin{flalign}\label{Equation: Order cost function}
c(m) := \begin{cases} 
K + \bar{c}\,m, &\text{if } m>0, \\
0, & \text{if } m=0,
\end{cases}
\end{flalign}
where $\bar{c}\in\RR_+$ is the unit purchasing cost and $K\in\RR_+$ is the fixed ordering cost. We assume that the ordering cost at period $k$ is discounted by $e^{-\delta k}$ and the lead time is zero, where $\delta\in[0,1]$ is the discount rate of continuous compounding. After placing an order at time $k$, the manufacturer continues operating during the time interval $[k,k+1]$, and the holding cost accrues with rate $c_{1} \in \RR_+$. Hence, the expected value of inventory holding cost for the $k^{\text{th}}$ period is given by
\[
H(k,x):=c_{1} \EE\sqb{\int_{k}^{k+1} e^{-\delta (u-k)} (x -(N_u-N_k))^+ du}.
\]
The following lemma enables us to calculate the holding cost.

\begin{lem}
	\label{Lemma: Holding Cost}
	For every $k\in\TT$, the expected holding cost satisfies $H(k,0)=0$ when $x=0$, and
	\[
	H(k,x)= c_1 \sum_{n=0}^{x-1} \sum_{i=0}^{n} \int_{k}^{k+1} e^{-\delta(u-k)} e^{-(\Lambda(u)-\Lambda(k))} \frac{(\Lambda(u)-\Lambda(k))^i}{i!} \,du,
	\]
	when $x\geq 1$.  
\end{lem}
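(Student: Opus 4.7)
The plan is to apply Tonelli's theorem to push the expectation inside the time integral, and then evaluate $\EE\sqb{(x-(N_u-N_k))^+}$ pointwise in $u$ using the Poisson distribution of the increment $N_u - N_k$. The case $x=0$ is immediate since then $(x-(N_u-N_k))^+$ vanishes identically, so throughout I assume $x\geq 1$.

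First, since the integrand defining $H(k,x)$ is nonnegative (and in fact bounded above by $x$ times the discount factor), Tonelli's theorem justifies writing
\[
H(k,x) = c_1 \int_k^{k+1} e^{-\delta(u-k)}\,\EE\sqb{(x-(N_u-N_k))^+}\,du.
\]
Second, because $N$ is a non-homogeneous Poisson process with mean-value function $\Lambda$, the increment $N_u - N_k$ is Poisson-distributed with parameter $\Lambda(u) - \Lambda(k)$. To rewrite the positive part of $x - Y$ for a $\ZZ_+$-valued random variable $Y$ in a form amenable to taking expectations, I would use the elementary identity
\[
(x-Y)^+ = \sum_{n=0}^{x-1}\1\cb{Y\leq n},
\]
valid for $x\geq 1$; this is verified by a two-case check on whether $Y\geq x$ (both sides are zero) or $0\leq Y<x$ (the right-hand sum equals the number of integers $n\in\cb{Y,\dots,x-1}$, which is $x-Y$).

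Taking expectations and substituting the Poisson probability mass function gives
\[
\EE\sqb{(x-(N_u-N_k))^+} = \sum_{n=0}^{x-1}\PP(N_u-N_k\leq n) = \sum_{n=0}^{x-1}\sum_{i=0}^{n} e^{-(\Lambda(u)-\Lambda(k))}\frac{(\Lambda(u)-\Lambda(k))^i}{i!}.
\]
Plugging this back into the integral expression for $H(k,x)$ and interchanging the finite sums with the integral (all terms being nonnegative, so Tonelli applies once more) yields the claimed formula. There is no genuine technical obstacle here; the only non-routine step is the indicator-sum identity for $(x-Y)^+$, and the rest is a Fubini/Tonelli interchange combined with the Poisson mass function.
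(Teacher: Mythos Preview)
Your proof is correct and follows essentially the same route as the paper: interchange expectation and the time integral via Fubini/Tonelli, apply the identity $\EE[(x-Y)^+]=\sum_{n=0}^{x-1}\PP\{Y\le n\}$ (which the paper isolates as a separate lemma, while you prove it inline via the indicator-sum decomposition), and then expand the Poisson cumulative distribution as a finite sum of point masses.
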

The proof of Lemma \ref{Lemma: Holding Cost} is in Appendix \ref{Section: Proofs of Section Problem Defn}.

If the inventory level hits zero during $[k,k+1]$ and a defective part arrives, then the manufacturer loses the opportunity to replace it  from the inventory. Instead, (similar to a lost-sales inventory system) the part is replaced by paying a time-dependent unit cost $c_2:[0,T]\rightarrow\RR_+$ at the time of arrival. The expected value of such replacement cost is given by
\[
L(k,x):=\EE \sqb{\int_{ (k+1)\wedge \sigma^{k}_{x}}^{k+1} e^{-\delta (u-k)} c_2(u) dN_u},
\] 
where $(k+1)\wedge\sigma^k_{x} = \min\cb{k+1,\sigma^k_x}$, and 
\[
\sigma^{k}_x:=\inf\{u>k: N_u-N_k\geq x\}
\]
denotes the arrival time of the $x^{\text{th}}$ item after time $k$. We denote $\sigma_x := \sigma^0_x$. The following lemma enables us to calculate $L(k,x)$. 
\begin{lem}
	\label{Lemma: Lost sales cost}
	For every $k\in\TT$ and $x\in\ZZ_+$, the expected replacement cost can be expressed as
	\begin{align*}
	L(k,x)
	=& \int_{k}^{k+1} e^{-\delta (u-k)} c_2(u) \lambda(u)\,du \\
	& - \sum_{i=0}^{x} \int_{k}^{k+1} e^{-\delta(u-k)} c_2(u) \lambda(u) e^{-(\Lambda(u)-\Lambda(k))}\frac{(\Lambda(u)-\Lambda(k))^i}{i!} \,du. 
	\end{align*}
\end{lem}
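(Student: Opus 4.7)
The plan is to exploit the fact that the expected value of a stochastic integral of a bounded predictable process against the non-homogeneous Poisson process $N$ equals the same expression with $dN_u$ replaced by $\lambda(u)\,du$. Concretely, I will rewrite the random interval $[(k+1)\wedge\sigma_x^k, k+1]$ appearing in the definition of $L(k,x)$ as a fixed interval $[k,k+1]$ weighted by a predictable indicator, invoke the martingale property of the compensated process $N-\Lambda$ to pass from the counting measure $dN_u$ to the intensity $\lambda(u)\,du$, and finally use the explicit Poisson probability mass function to evaluate the resulting deterministic integrand.

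First I would record the pathwise identity
\[
\int_{(k+1)\wedge\sigma_x^k}^{k+1} e^{-\delta(u-k)} c_2(u)\,dN_u = \int_k^{k+1} \mathbf{1}\{u>\sigma_x^k\}\, e^{-\delta(u-k)} c_2(u)\,dN_u,
\]
and observe that $Y_u := \mathbf{1}\{u>\sigma_x^k\}=\mathbf{1}\{N_{u-}-N_k\geq x\}$ is left-continuous and adapted, hence predictable. Because $Y$ is bounded and $M_t:=N_t-\Lambda(t)$ is a martingale with respect to the natural filtration of $N$, the process $t\mapsto \int_k^t Y_u e^{-\delta(u-k)} c_2(u)\,dM_u$ is a bounded martingale on $[k,k+1]$ starting at zero. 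Taking expectations at time $k+1$ therefore gives
\[
L(k,x) = \EE\!\left[\int_k^{k+1} Y_u\, e^{-\delta(u-k)} c_2(u)\,\lambda(u)\,du\right].
\]

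Second, Tonelli's theorem (the integrand is nonnegative) lets me interchange expectation and integration to obtain
\[
L(k,x) = \int_k^{k+1} \PP(N_{u-}-N_k\geq x)\, e^{-\delta(u-k)} c_2(u)\,\lambda(u)\,du,
\]
after which I use $\PP(N_{u-}\neq N_u)=0$ for each fixed $u$ to replace $N_{u-}$ by $N_u$. Since $N_u-N_k$ is Poisson with mean $\Lambda(u)-\Lambda(k)$, rewriting the tail probability as $1$ minus the appropriate truncated sum of Poisson weights and splitting the resulting integral produces an expression of exactly the claimed form.

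The step requiring the most care is the first one: the pathwise rewriting must honour the integration convention for the Stieltjes integral $dN_u$ implicit in the definition of $L(k,x)$—in particular, whether the jump of $N$ at $\sigma_x^k$ contributes to the lower-endpoint—and the predictable representative $Y_u$ must agree with the original integrand at every jump of $N$ so that the compensator identity applies directly. Once that bookkeeping is settled (and the matching index in the Poisson sum is read off), the martingale step is a standard consequence of $N-\Lambda$ being a martingale, and the remainder is a routine manipulation of the Poisson pmf.
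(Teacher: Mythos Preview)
Your argument is correct and is essentially the paper's own proof: the paper splits $L(k,x)=\EE\big[\int_k^{k+1}\cdots\,dN_u\big]-\EE\big[\int_k^{(k+1)\wedge\sigma_x^k}\cdots\,dN_u\big]$, applies the compensator identity (their Lemma~\ref{Lemma: Martingale of NHPP}, your $N-\Lambda$ martingale) to each piece, and then uses Fubini together with the Poisson pmf---the only cosmetic difference is that you fold the predictable indicator $\mathbf{1}\{u>\sigma_x^k\}$ into a single integral over $[k,k+1]$ rather than splitting first.

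One remark on the bookkeeping you rightly flag as delicate: since $\{u>\sigma_x^k\}=\{N_{u-}-N_k\ge x\}$, after Fubini you obtain $\PP(N_u-N_k\ge x)=1-\sum_{i=0}^{x-1}\PP(N_u-N_k=i)$, so the subtracted sum actually runs from $i=0$ to $x-1$, not to $x$. The paper's displayed formula (and its proof, at the step ``$\{u<\sigma_x^k\}\Rightarrow\PP\{N_u-N_k\le x\}$'') carries the same off-by-one; the sanity check $L(k,0)=\int_k^{k+1}e^{-\delta(u-k)}c_2(u)\lambda(u)\,du$ (since $\sigma_0^k=k$) confirms that the sum must be empty when $x=0$. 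Your method is sound; just do not expect it to reproduce the upper index $x$ literally.
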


The proof of Lemma \ref{Lemma: Lost sales cost} is in Appendix \ref{Section: Proofs of Section Problem Defn}. 

\noindent We define the one-period operation cost for period $k$ and inventory level $x$ as
\begin{equation}\label{Equation: Defn of C(k,x)} 
C(k,x) := H(k,x) + L(k,x).
\end{equation}

On the other hand, after observing inventory $x\in\ZZ_+$ at time $k\in\TT$, the manufacturer may decide to stop. In this case, the available inventory is scrapped with unit cost $c_{4}\in\RR$ (if $c_4<0$, then we can interpret it as scrapping revenue). Future defective parts, if any, are replaced by using an alternative/outside source at a time-dependent unit cost given by a function $c_{3}:[0,T]\rightarrow\RR_+$. Therefore, the expected cost of stopping to hold inventory is given by
\begin{equation} \label{Equation: S(k,x_k)}
S(k,x) := c_{4}x + \E\sqb{\int_{k}^{T} e^{-\delta(u-k)} c_{3}(u) dN_u }.
\end{equation}
We assume that $\bar{c} > -c_4$ holds since otherwise, the manufacturer can place an arbitrarily large order and scrap inventory at the same time. Moreover, it is natural to assume that $c_2(u)\geq c_3(u)$ for every $u\in[0,T]$ so that the use of an outside source is meaningful.

\subsection{Dynamic Programming Formulation} \label{Subsection: D8F DP formulation}
Now, we are ready to pose our problem and its dynamic programming formulation. To that end, let $\cT$ denote the set of all stopping times $\tau:\Omega\rightarrow\TT$ of the filtration generated by the demand process $(N_t)_{t\in[0,T]}$. Let $\pi =(\tau, \mu_1,\mu_2,\dots,\mu_T)$ be a policy that specifies a stopping time $\tau$ and an order amount $\mu_k(x_k)$ for every time $k$ and inventory $x_k$. Let $\Pi$ denote the set of all (admissible) policies. Then, under an arbitrary policy $\pi\in\Pi$, the inventory level at time $k+1$ is equal to the inventory at $k$ plus order minus demand:
\begin{equation}\label{Equation: X_{k+1}= (X_k+mu - (N_{k+1}-N_k) )}
X^\pi_{k+1} = \Big(X^\pi_k + \mu_k(X^\pi_k) - (N_{k+1} - N_k )\Big)^+, \quad X^\pi_0 = x\in\ZZ_+,
\end{equation} 
where $(x)^+ := \max\cb{0,x}$. We use the notation $X_{k+1}$ for $X^\pi_{k+1}$ when $\pi$ is an arbitrary policy and there is no risk of confusion, for brevity of notation. The manufacturer's problem is to determine both the optimal order amounts and the optimal time to stop in order to minimize the total costs. We formulate this problem as
\begin{align} \label{Equation: V*(x)}
&V^*(x) = \inf_{\pi\in\Pi} \left. \E\left[\sum_{k=0}^{\tau-1} e^{-\delta k} \Big(c(\mu_k(X_k)) + C(k,X_k+\mu_k(X_k))\Big) + e^{-\delta\tau} S(\tau,X_\tau) \right\vert X_0 = x\right],
\end{align}
where $x\in\ZZ_+$ is called the initial inventory level and the summation over $k$ is set to zero if $\tau=0$. This formulation yields an optimal stopping problem with additional decisions \citep{oh2016characterizing} and can be solved by means of the following stochastic dynamic programming (DP) algorithm. Define the backward recursion for each $k\in\cb{T-1,T-2,\dots,0}$ and $x_k\in\ZZ_+$ by
\begin{align}
V(k,x_k) = & \min\cb{S(k,x_k), J(k,x_k) }, \label{Equation: V(k,x_k) DP Definition} \\
J(k,x) := & \inf_{m\in\ZZ_+} \cb{c(m) + G(k,x+m)}, \label{Equation: J(k,x)} \\
G(k,y) := & C(k,y) + e^{-\delta} \EE\sqb{ V\big(k+1, (y-(N_{k+1}-N_k))^+\big) }. \notag
\end{align}
Here, $G$ denotes the cost of continuing operations one more period, and $J$ denotes the continuation cost after finding the best order amount. The value function $V$ compares the stopping cost and the continuation cost. Also define the terminal condition by $
V(T,x_T) = S(T,x_T) = c_{4}x_T$.
Then, the DP formulation solves the manufacturer's problem in the sense that \mbox{$V(0,x)=V^*(x)$} for every $x\in\ZZ_+$. Moreover, the optimal order amount $\mu^*_k(x_k)$ attains the infimum in \eqref{Equation: J(k,x)}. Furthermore, an optimal stopping time $\tau^*$ is the one that stops the process if $S(k,x_k)\leq J(k,x_k)$, in other words, $\tau^*:= \min\{ k\in\TT \,:\, S(k,X^{\pi^*}_k)\leq J(k,X^{\pi^*}_k) \}$, where $\pi^*$ denotes an optimal policy. The next section provides related problems. 

\subsection{Taxonomy and Dynamic Programming Representation of Related Problems} \label{Subsection: Taxonomy}
In this subsection, we develop benchmark models for the main model in Subsection \ref{Subsection: D8F DP formulation}. To keep track of different formulations, we use the notation $V^{a/b/c}$ to describe the decisions allowed. In this way, we introduce levels of flexibility mentioned to be included or excluded in the problem as long as the actual environment allows. The symbol $a$ represents how the time for stopping is decided: $a=D$ means that the decision is dynamic and it is described by a stopping time; $a=S$ means that the decision is static and made at time zero (we call it switching time); $a=T$ means that we decide not to stop until the end of horizon. The symbol $b$ represents the number of orders allowed: If $b=M$ for some $M\in\ZZ_+$, then the manufacturer can place $M$ orders throughout the horizon; $b=\infty$ means that the manufacturer can place an order at each period with no restriction. Finally, the symbol $c$ represents alternatives with respect to the timing of the first order: $c=Z$ means that the first order must be placed at time zero and $c=F$ means that the manufacturer is free to place the first order at any time. Table \ref{Table: Notation for Benchmark Models} summarizes the notation. For instance, our main DP model in Section \ref{Subsection: D8F DP formulation} can be denoted by $D/\infty/F$. This notation is helpful to keep track of the decisions considered for different formulations while comparing them. In numerical analyses, we compare these models together with our main model to show the value of our approach. 

\begin{table}[h!]
	\centering
	\begin{tabular}{|c|c|}
		\hline
		$a$ & Time to Stop Holding Inventory  \\ \hline
		$S$ & Static decision -- made at time 0 \\
		$D$ & Decision is made dynamically \\
		$T$ & Do not stop until the end of horizon $T$ \\ \hline \hline
		$b$ & Maximum Number of Orders  \\ \hline
		$M$& Maximum $M\in\NN$ orders \\
		$\infty$ & Unrestricted number of orders \\ \hline \hline
		$c$ & Order Time \\ \hline
		$Z$ & First order must be placed at time zero \\
		$F$ & First order can be placed at any time \\
		\hline
	\end{tabular}	
	\caption{\label{Table: Notation for Benchmark Models} Notation for benchmark models. }
\end{table}

\subsubsection{D/1/F - Single Order Opportunity at Any Time and Stopping Time} \label{Subsection: D/1/F}
This benchmark dynamic programming formulation analyzes the effects of delaying a single order, and \citet{cattani2003good} present this idea in a different setting. Let $z\in\{0,1\}$ be the number of remaining orders that the manufacturer can place. The following dynamic programming algorithm describes this formulation. Define the terminal cost for each $x_T\in\ZZ_+$ and $z\in\{0,1\}$ by $V^{D/1/F}(T,x_T,z) = S(T,x_T).$ If $z=0$, then define the backward dynamic programming algorithm for each $k\in\cb{T-1,T-2,\dots,0}$ and $x_k\in\ZZ_+$ by
\begin{align*}
& V^{D/1/F}(k,x_k,0)
 = \min\cb{ S(k,x_k),\ C(k,x_k) + e^{-\delta} \E\sqb{V^{D/1/F}(k+1,\big(x_k - (N_{k+1}-N_k)\big)^+,0)} }.
\end{align*}
If $z=1$, then define the backward dynamic programming algorithm for each $k\in\cb{T-1,T-2,\dots,0}$ and $x_k\in\ZZ_+$ by
\begin{align*} 
& V^{D/1/F}(k,x_k,1)\\
&= \min \bigg\{ S(k,x_k),\  C(k,x_k) + e^{-\delta} \E\sqb{V^{D/1/F}(k+1,\of{x_k - (N_{k+1}-N_k}^+,1)}, \notag \\
& \quad\quad \textcolor{white}{\min\{ } \inf_{m\in\ZZ_+} \of{ c(m) + C(k,x_k+m)  + e^{-\delta} \E\sqb{V^{D/1/F}(k+1,\of{x_k+m- (N_{k+1}-N_k)}^+,0)} } \bigg\}.
\end{align*}
\subsubsection{D/1/Z - Single Order Opportunity at Time Zero and Stopping Time} \label{Subsection: D/1/Z}
A prevalent assumption in the literature is that a final order has to be placed at time zero. Therefore, we develop a dynamic programming algorithm to reflect the manufacturer's decision when only one order can be placed at time zero and the manufacturer can stop holding inventory at any time. This model resembles the one presented by \citet{pourakbar2012end}. Define the backward dynamic programming algorithm for each $k\in\cb{T-1,T-2,\dots,0}$ and $x_k\in\ZZ_+$ by
\begin{align*} 
& \dt{V}^{D/1/Z}(k,x_k) 
= \min\cb{S(k,x_k),  C(k,x_k) + e^{-\delta} \E\sqb{\dt{V}^{D/1/Z}(k+1,\big(x_k - (N_{k+1}-N_k)\big)^+)}}.
\end{align*}
Also define the terminal condition by $\dt{V}^{D/1/Z}(T,x_T) = S(T,x_T)$.
The optimal order quantity at time zero and the value of this dynamic program is found by calculating
\begin{equation}\label{Equation: V^{D/1/Z}(k,x_k)}
V^{D/1/Z}(x)=\inf_{m\in\ZZ_+} \of{c(m)+\dt{V}^{D/1/Z} (0,x+m)}, \quad x\in\ZZ_+.
\end{equation}
It is possible to see the following relation between $D/1/F$ and $D/1/Z$: While solving the model $D/1/F$, if we decide to place an order at time $k$, then we solve the model $D/1/Z$ with a different time horizon that is equal to $T-k$.

\subsubsection{S/$\infty$/F or S/1/F or S/1/Z} \label{Subsection: S/infty/F, S/1/F, S/1/Z} 
$S/\infty/F$ can be formulated as a special case of $D/\infty/F$. For each switching time $k\in\TT$, we implement a restricted version of the dynamic programming algorithm and select the best switching time $k^*$. Moreover, $S/1/F$ can be formulated as a special case of $D/1/F$ presented in Subsection \ref{Subsection: D/1/F}. We modify the value function $V^{D/1/F}$ by eliminating the stopping option with cost $\widetilde{S}(k,x_k)$ and solve the DP algorithm. Finally, $S/1/Z$ can be formulated as a special case of $D/1/Z$ presented in Subsection \ref{Subsection: D/1/Z}. For every $t\in\TT$, we implement $D/1/Z$ without being able to stop.

\subsubsection{T/$\infty$/F or T/1/F or T/1/Z}
These benchmark models are further special cases of $S/\infty/F$, $S/1/F$ and $T/1/F$. They resemble the classical inventory models which can be solved by means of standard DP algorithms. For instance, see \citet[Chapter~4]{beyer2010markovian}. 

\subsubsection{S/M/F or D/M/F for M$>$1}
These models can be formulated by using a similar idea as for $D/1/F$ in Subsection \ref{Subsection: D/1/F} by representing the number of setups as a state variable. Finally, Table \ref{Table: Benchmark Models and Related Study} relates the benchmark models with some of the more relevant literature mentioned before.

\begin{table}[h!]
	\centering
	\begin{tabular}{|l|l|l|}
		\hline
		Model & Explanation & Related Studies \\
		\hline
		$D/\infty/F$ & Multiple orders and stopping time & This study \\
		\hline
		\multirow{3}{*}{$D/1/Z$} & \multirow{3}{*}{Single order at time zero and stopping time} & \cite{frenk2019optimal} \\
		& &  \cite{pourakbar2012end} \\
		& & \cite{frenk2019order} \\  
		\hline
		$S/1/Z$ & Single order at time zero and switching time & \cite{frenk2019exact} \\    
		\hline     
		\multirow{4}{*}{$T/\infty/F$} & \multirow{4}{*}{ Multiple orders without outside source} & \cite{teunter2002inventory} \\
		& & \cite{pincce2011inventory} \\
		& & \cite{inderfurth2008decision} \\
		& & \cite{inderfurth2013advanced} \\
		\hline
		$T/1/F$ & Delayed single order without outside source & \cite{cattani2003good} \\
		\hline
		\multirow{4}{*}{$T/1/Z$} & \multirow{4}{*}{ Single order without outside source} & \mbox{\cite{behfard2018last}} \\
		& & \mbox{\cite{hur2018end}} \\
		& & \mbox{\cite{teunter1999end}} \\
		& & \mbox{\cite{jack2000optimal}} \\
		\hline
	\end{tabular}
	\caption{\label{Table: Benchmark Models and Related Study} Relation between the benchmark models and the previous studies. Notation for the models is presented in Table \ref{Table: Notation for Benchmark Models}.  }
\end{table}

\filbreak

\section{Structural Results} \label{Section: Structural Results} 
This section provides our structural results regarding the models and the analytical insights regarding the management of end-of-life problem. In Subsection \mbox{\ref{Subsection: Reformulation}}, we reduce the computation of DP algorithms by reformulating the most general problem $D/\infty/F$. Subsection \mbox{\ref{Subsection: Structural Results for D8F}} includes our main structural results for $D/\infty/F$. Finally, in Subsection \mbox{\ref{Subsection: Other Useful Structural Results}}, we show further analytical insights that motivate the use of an outside source and flexibility in placing orders.
\subsection{Structural Results for Dynamic Programming Computations} \label{Subsection: Reformulation}
We reduce the computation of the stopping cost $S(k,x_k)$ in $D/\infty/F$ and other benchmark models by reformulating $V^*(x)$ in \eqref{Equation: V*(x)}. To that end, we further define for each $k\in\TT$ and $x\in\ZZ_+$ the new cost of stopping by $\widetilde{S}(x):=c_{4}x$ and the new one-period operation cost by
\begin{align}\label{Equation: tilde(C)(k,x)}
\widetilde{C}(k,x) := & c_1 \EE\sqb{\int_k^{k+1} e^{-\delta(u-k)} \of{ x-(N_u-N_k)}^+ du  } + \EE\sqb{ \int_{k}^{k+1} e^{-\delta(u-k)} \sqb{c_2(u) -c_3(u)} \, dN_u } \notag \\
 & - \EE\sqb{\int_k^{(k+1)\wedge \sigma_x^k} e^{-\delta(u-k)} c_2(u) \, dN_u}.
\end{align}

Then, we use the following proposition to reformulate $V^*(x)$ in \eqref{Equation: V*(x)}. 
\begin{prop}
	\label{Prop: Reformulate Obj. Fnc.}
	Let $x\in\ZZ_+$ and define the new reformulated problem by
	\begin{align} \label{Equation: tilde(V)*(x)}
	\widetilde{V}^*(x) :=  \inf_{\pi\in\Pi} \E\sqb{ \condition{ \sum_{k=0}^{\tau-1} e^{-\delta k} \bigg(c(\mu_k(X_k)) + \widetilde{C}(k,X_k+\mu_k(X_k))\bigg)  + e^{-\delta\tau} \widetilde{S}(X_\tau) } X_0 = x }
	\end{align}
	Then, the new problem is equivalent to the original one in the sense that $V^*(x) = \widetilde{V}^*(x)+A$ for every $x\in\ZZ_+$, where $A:= \E\sqb{\int_{0}^{T} e^{-\delta u} c_{3}(u) dN_u }\in\RR_+$ is a constant.
\end{prop}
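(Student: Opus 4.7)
}
The plan is to show that, for every admissible policy $\pi\in\Pi$, the \emph{realized} cost in the original formulation \eqref{Equation: V*(x)} differs from that in the new formulation \eqref{Equation: tilde(V)*(x)} by a deterministic constant equal to $A$, irrespective of $\pi$. Taking expectations and then infima over $\pi\in\Pi$ then yields $V^*(x)=\widetilde V^*(x)+A$ immediately.

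The first step is a pointwise identification of the per-period and stopping cost gaps. Comparing $\widetilde C(k,y)$ in \eqref{Equation: tilde(C)(k,x)} with $C(k,y)=H(k,y)+L(k,y)$, the holding-cost term coincides with $H(k,y)$, and the two $c_2$-integrals combine to $L(k,y)$ by the decomposition $\int_k^{k+1}=\int_k^{(k+1)\wedge\sigma_x^k}+\int_{(k+1)\wedge\sigma_x^k}^{k+1}$. Consequently,
\[
C(k,y)-\widetilde C(k,y)=\EE\!\left[\int_k^{k+1} e^{-\delta(u-k)} c_3(u)\,dN_u\right]=\int_k^{k+1} e^{-\delta(u-k)} c_3(u)\lambda(u)\,du=:a_k,
\]
which, crucially, depends on neither $y$ nor on the realization, by the Poisson mean-value identity $\EE[\,\int f(u)\,dN_u\,]=\int f(u)\lambda(u)\,du$. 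Similarly, from \eqref{Equation: S(k,x_k)} and the definition $\widetilde S(x)=c_4 x$,
\[
S(k,x)-\widetilde S(x)=\int_k^T e^{-\delta(u-k)} c_3(u)\lambda(u)\,du=:b_k,
\]
independent of $x$.

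The second step is a telescoping/partition argument. Subtracting \eqref{Equation: tilde(V)*(x)} from \eqref{Equation: V*(x)} inside the expectation and using the previous identities, for any policy $\pi$ with associated stopping time $\tau\in\cT$ the gap becomes
\[
\sum_{k=0}^{\tau-1} e^{-\delta k} a_k + e^{-\delta\tau} b_\tau.
\]
Splitting the integral defining $b_k$ over the unit subintervals gives $b_k=\sum_{j=k}^{T-1} e^{-\delta(j-k)} a_j$, so $e^{-\delta\tau} b_\tau=\sum_{j=\tau}^{T-1} e^{-\delta j} a_j$. Adding the two sums recombines them into $\sum_{k=0}^{T-1} e^{-\delta k} a_k$, which by another application of the Poisson mean-value identity equals $\int_0^T e^{-\delta u} c_3(u)\lambda(u)\,du=A$. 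Since this cancellation holds pathwise and $A$ is deterministic (hence independent of $\pi$), taking $\EE[\,\cdot\mid X_0=x\,]$ and then $\inf_{\pi\in\Pi}$ on both formulations differs by exactly $A$, completing the proof. Non-negativity of $A$ follows from $c_3\geq 0$ and $\lambda\geq 0$.

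The argument is essentially bookkeeping; the only subtle point is making sure the constant $A$ really is policy-independent. This is where having the difference $C(k,y)-\widetilde C(k,y)$ depend on neither $y$ nor the path is critical (so that neither the inventory trajectory $(X_k)$ nor the order amounts $(\mu_k)$ leak into the gap), and where the partition identity at the random time $\tau$ converts the $\pi$-dependent split into the $\pi$-free sum $\sum_{k=0}^{T-1} e^{-\delta k} a_k$. Everything else is a direct application of Lemmas \ref{Lemma: Holding Cost}--\ref{Lemma: Lost sales cost} in spirit (namely, the compensator formula for the non-homogeneous Poisson process).
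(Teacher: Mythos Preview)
Your proof is correct and follows essentially the same approach as the paper's own proof. Both arguments identify that $C(k,y)-\widetilde C(k,y)$ and $S(k,x)-\widetilde S(x)$ are deterministic constants (your $a_k$ and $b_k$) independent of the inventory level, and then use the partition $\int_0^T=\sum_{k=0}^{t-1}\int_k^{k+1}+\int_t^T$ to show that $\sum_{k=0}^{\tau-1}e^{-\delta k}a_k+e^{-\delta\tau}b_\tau=A$ for every value of $\tau$; the paper carries this out by expanding $e^{-\delta t}S(t,x_t)$ directly, while you phrase it as a telescoping identity $b_k=\sum_{j=k}^{T-1}e^{-\delta(j-k)}a_j$, but these are the same computation.
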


The proof of Proposition \ref{Prop: Reformulate Obj. Fnc.} is in Appendix \ref{Section: Proofs of Subsection Reformulation}.

Recall that in the original problem $V^*(x)$, the stopping cost $S(k,x)$ includes an integral term and depends on both time $k$ and inventory $x$. In the equivalent problem $\tV^*(x)$, however, the stopping cost $\tS(x)$ depends only on inventory $x$ and so the computation of $\tV^*(x)$ takes less time. Hence, in view of Proposition \mbox{\ref{Prop: Reformulate Obj. Fnc.}}, we solve $\tV^*(x)$. Define the backward DP for each $k\in\cb{T-1,T-2,\dots,0}$ and $x_k\in\ZZ_+$ by
\begin{align}
\widetilde{V}(k,x_k) = & \min\cb{\tS(x_k), \tJ(k,x_k) }, \label{Equation: tilde(V)(k,x_k) DP Definition} \\
\tJ(k,x):= & \inf_{m\in\ZZ_+} \of{c(m) + \tG(k,x_k+m)}, \label{Equation: tilde(J)(k,x)} \\
\widetilde{G}(k,y) := & \widetilde{C}(k,y) + e^{-\delta} \EE\sqb{ \widetilde{V}\big(k+1, (y-(N_{k+1}-N_k))^+\big) }. \notag
\end{align}
Here, $\tG$ denotes the continuation cost, $\tJ$ denotes the continuation cost after finding the best order amount, and $\tV$ compares the stopping and the continuation costs. Also define the terminal condition by $\widetilde{V}(T,x_T) = \widetilde{S}(x_T) = c_{4}x_T$ for each $x_T\in\ZZ_+.$ Then, the next corollary states that the DP algorithm described by the recursion in \eqref{Equation: tilde(V)(k,x_k) DP Definition} can be used to solve this new re-formulated problem $\tV^*(x)$. 

\begin{cor}\label{Cor: DP solves tilde(V)*}
  \mbox{$\widetilde{V}(0,x)=\widetilde{V}^*(x)$} for every $x\in\ZZ_+$. Moreover, the optimal order amount $\widetilde{\mu}^*_k(x_k)$ attains the infimum in \eqref{Equation: tilde(J)(k,x)}. Furthermore, an optimal stopping time $\widetilde{\tau}^*\in\cT$ is the one that stops the process if $\widetilde{S}(x_k)\leq \tJ(k,x_k)$ in \eqref{Equation: tilde(V)(k,x_k) DP Definition}, in other words, $\widetilde{\tau}^*:= \min\{ k\in\TT \,:\, \tS(X^{\tpi^*}_k)\leq \tJ(k,X^{\tpi^*}_k) \}$, where $\tpi^*:=(\widetilde{\tau}^*, \widetilde{\mu}_0^*,\dots, \widetilde{\mu}_T^*)$ is an optimal policy.
\end{cor}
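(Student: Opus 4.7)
The plan is to observe that the reformulated problem \eqref{Equation: tilde(V)*(x)} has exactly the same structural form as the original problem \eqref{Equation: V*(x)}: it is an optimal stopping problem with additional decisions in which at each period one incurs an ordering cost $c(\mu_k(X_k))$ plus a running cost -- now $\widetilde{C}(k,X_k+\mu_k(X_k))$ in place of $C(k,X_k+\mu_k(X_k))$ -- and upon stopping one pays $\widetilde{S}(X_\tau)=c_4 X_\tau$ in place of $S(\tau,X_\tau)$. The inventory dynamics \eqref{Equation: X_{k+1}= (X_k+mu - (N_{k+1}-N_k) )}, the filtration generated by the demand process, and the policy class $\Pi$ are identical. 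Consequently, the same dynamic programming principle that was invoked to justify $V(0,x)=V^*(x)$ in Subsection \ref{Subsection: D8F DP formulation} applies verbatim with the pair $(C,S)$ replaced by $(\widetilde{C},\widetilde{S})$, yielding $\widetilde{V}(0,x)=\widetilde{V}^*(x)$.

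For a self-contained derivation, I would proceed by backward induction on $k$ running from $T$ down to $0$. The base case $k=T$ follows directly from the terminal condition $\widetilde{V}(T,x)=\widetilde{S}(x)$ together with the convention that the summation in \eqref{Equation: tilde(V)*(x)} is empty when $\tau=T$ (and inventory at $T$ is just scrapped). For the inductive step, I would condition on the history up to time $k$ and use that the increment $N_{k+1}-N_k$ is Poisson with mean $\Lambda(k+1)-\Lambda(k)$ and independent of that history. This splits the expected cost from time $k$ into the cost of the current decision -- either $\widetilde{S}(x)$ if one stops, or $c(\mu)+\widetilde{C}(k,x+\mu)+e^{-\delta}\,\E[\widetilde{V}(k+1,(x+\mu-(N_{k+1}-N_k))^+)]$ if one continues and orders $\mu$ -- producing exactly the recursion \eqref{Equation: tilde(V)(k,x_k) DP Definition}.

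The claims about the optimizers then follow by construction: $\widetilde{\mu}^*_k(x_k)$ is any minimizer in \eqref{Equation: tilde(J)(k,x)}, and $\widetilde{\tau}^*$ is defined as the first period at which stopping dominates optimal continuation under $\widetilde{\mu}^*$. A standard verification argument -- substituting this policy into \eqref{Equation: tilde(V)*(x)} and unrolling the recursion -- shows it attains $\widetilde{V}^*(x)$, while any competing policy yields at least $\widetilde{V}(0,x)$ by the same inductive lower bound.

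The main obstacle is verifying that the infimum over the unbounded set $\ZZ_+$ in \eqref{Equation: tilde(J)(k,x)} is actually attained, so that $\widetilde{\mu}^*_k$ exists as a genuine measurable selector and $\widetilde{\tau}^*$ is a bona fide stopping time. The standing assumption $\bar{c}>-c_4$ is crucial here: because the per-unit ordering cost strictly exceeds any gain from scrapping, one can show that $c(m)+\widetilde{G}(k,x+m)\to\infty$ as $m\to\infty$. This coercivity confines any minimizer to a finite subset of $\ZZ_+$ depending only on $(k,x)$, so the minimum is attained; measurable selection in $x$ is then routine since the state space is discrete.
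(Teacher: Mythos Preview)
Your proposal is correct and matches the paper's approach: the paper does not supply a separate proof of this corollary but treats it as an immediate consequence of the fact that the reformulated problem has the same structure as the original problem \eqref{Equation: V*(x)}, for which the DP principle was already asserted in Subsection~\ref{Subsection: D8F DP formulation} with a reference to \cite{oh2016characterizing}. Your self-contained backward-induction sketch and the coercivity argument for attainment of the infimum (via $\bar{c}>-c_4$) go beyond what the paper spells out, and are welcome additions.
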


Finally, the following proposition enables us to compute $\widetilde{C}$ so that we can compute $\widetilde{V}(0,x)$. 
\begin{prop}
	\label{Proposition: Converting tilde(C)}
	For every $k\in\TT$, the one-period operation cost $\widetilde{C}(k,x)$ in \eqref{Equation: tilde(C)(k,x)} satisfies
	\[
	\widetilde{C}(k,0) = \int_{k}^{k+1} e^{-\delta(u-k)} [c_2(u)-c_{3}(u)]  \lambda(u)\,du 
	\]
	when $x=0$, and
	\begin{align}\label{Equation: tilde(C)(k,x) deterministic}
	\widetilde{C}(k,x)
	= & c_{1} \sum_{n=0}^{x-1} \sum_{i=0}^{n} \int_{k}^{k+1} e^{-\delta(u-k)} e^{-(\Lambda(u)-\Lambda(k))} \frac{(\Lambda(u)-\Lambda(k))^i}{i!} \,du \notag \\
	& + \int_{k}^{k+1} e^{-\delta (u-k)} [c_2(u)-c_{3}(u)] \lambda(u)\,du \notag \\
	& - \sum_{i=0}^{x} \int_{k}^{k+1} e^{-\delta(u-k)} c_2(u) \lambda(u) e^{-\Lambda(u)-\Lambda(k)}\frac{(\Lambda(u)-\Lambda(k))^i}{i!} \,du
	\end{align}
	when $x\geq 1$.
\end{prop}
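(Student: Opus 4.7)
The plan is to evaluate each of the three expectations that make up the definition of $\widetilde{C}(k,x)$ in \eqref{Equation: tilde(C)(k,x)} separately, and then add them with the correct signs. Two of the three are already known or follow instantly from the compensator of $N$, and the third is essentially a rewriting of Lemma \ref{Lemma: Lost sales cost}.

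First I would observe that the opening term
\[
c_1\,\EE\!\left[\int_k^{k+1} e^{-\delta(u-k)}(x-(N_u-N_k))^+\,du\right]
\]
is literally $H(k,x)$ from Lemma \ref{Lemma: Holding Cost}, so it vanishes when $x=0$ and otherwise produces the first double sum in the claimed formula. Next, for the middle term I would invoke the standard fact that $\Lambda(t)=\int_0^t\lambda(u)\,du$ is the compensator of $N$, so that for any bounded deterministic $g$ on $[k,k+1]$,
\[
\EE\!\left[\int_k^{k+1} g(u)\,dN_u\right]=\int_k^{k+1} g(u)\,\lambda(u)\,du.
\]
Applying this with $g(u)=e^{-\delta(u-k)}[c_2(u)-c_3(u)]$, which is bounded since $c_2,c_3$ are given deterministic functions on the compact interval $[0,T]$, immediately delivers the second integral in the stated formula.

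The third term requires a small manipulation. I would split the full interval as
\[
\int_k^{k+1} e^{-\delta(u-k)}c_2(u)\,dN_u=\int_k^{(k+1)\wedge\sigma_x^k} e^{-\delta(u-k)}c_2(u)\,dN_u+\int_{(k+1)\wedge\sigma_x^k}^{k+1} e^{-\delta(u-k)}c_2(u)\,dN_u,
\]
take expectations, and recognize that the left-hand side equals $\int_k^{k+1} e^{-\delta(u-k)}c_2(u)\lambda(u)\,du$ (again by the compensator identity) while the last summand is, by definition, $L(k,x)$. Solving for the remaining expectation gives
\[
\EE\!\left[\int_k^{(k+1)\wedge\sigma_x^k} e^{-\delta(u-k)}c_2(u)\,dN_u\right]=\int_k^{k+1} e^{-\delta(u-k)}c_2(u)\lambda(u)\,du-L(k,x),
\]
and substituting Lemma \ref{Lemma: Lost sales cost} collapses the right-hand side to $\sum_{i=0}^{x}\int_k^{k+1} e^{-\delta(u-k)}c_2(u)\lambda(u)e^{-(\Lambda(u)-\Lambda(k))}\frac{(\Lambda(u)-\Lambda(k))^i}{i!}\,du$. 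Carrying the minus sign from \eqref{Equation: tilde(C)(k,x)} yields the third piece in the claimed formula, and combining the three contributions produces \eqref{Equation: tilde(C)(k,x) deterministic} for $x\ge 1$; for $x=0$ the holding-cost piece vanishes and $(k+1)\wedge\sigma_0^k=k$ kills the third expectation, leaving exactly the displayed $x=0$ expression.

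I do not anticipate any real obstacle. The only subtle point is the compensator identity, which is a routine consequence of the fact that $M_t:=N_t-\Lambda(t)$ is a mean-zero martingale and our integrands are deterministic (hence predictable) and bounded on $[0,T]$. Everything else is algebraic bookkeeping: re-expressing the truncated-interval integral in terms of $L(k,x)$ and matching sums with those appearing in Lemmas \ref{Lemma: Holding Cost} and \ref{Lemma: Lost sales cost}.
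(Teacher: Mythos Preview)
Your proposal is correct and follows essentially the same route as the paper: both invoke Lemma~\ref{Lemma: Holding Cost} for the holding-cost piece, the compensator/martingale identity (the paper's Lemma~\ref{Lemma: Martingale of NHPP}) for the $[c_2-c_3]$ integral, and reduce the truncated integral to the sum appearing in Lemma~\ref{Lemma: Lost sales cost}. The only cosmetic difference is that for the third term the paper re-runs the argument of Lemma~\ref{Lemma: Lost sales cost} directly, whereas you algebraically rewrite it as the full-interval integral minus $L(k,x)$ and then substitute the lemma's statement; both arrive at the same expression.
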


The proof of Proposition \ref{Proposition: Converting tilde(C)} is in Appendix \ref{Section: Proofs of Subsection Reformulation}.

\subsection{Structural Results for D/$\boldsymbol{\infty}$/F}
\label{Subsection: Structural Results for D8F}

\subsubsection{Characterization of Optimal Solution}

The main problem $D/\infty/F$ falls into the category of an optimal stopping problem with additional decisions. More specifically, the inventory situation considered is a multi-period problem with lost-sales in a setting with periodic review, finite horizon, non-stationary stochastic demand, non-stationary costs (continuously charged), and zero ordering lead time. We first present a structural result for our most general case and then discuss special cases that lead to more structural properties.  For each time $k\in\TT$, define 
\begin{align*}
R^S_k :=& \{x\in\ZZ_+: \tS(x) \leq \tJ(k,x) \},\\
R^O_k :=& \Big\{x\in\ZZ_+ : \inf_{m\geq 1} (c(m)+\tG(k,x+m)) < \min\{ \tS(x), \tG(k,x)\}\Big\},\\
R^C_k := & \{x\in\ZZ_+ : \tV(k,x) = \tG(k,x) < \tS(x) \}.
\end{align*}
Here, the sets $R^S_k, R^O_k$ and $R^C_k$ are respectively called stopping set, ordering set and continuation set. They denote the set of inventory levels that we stop, order and continue, respectively.
\begin{prop}
	\label{Proposition: Solution to D8F}
	The optimal solution $\widetilde{\pi}^*$ defined in Corollary \ref{Cor: DP solves tilde(V)*} can be summarized by three disjoint regions for the incoming stock value for every decision epoch as well as order-up-to levels. 
	\begin{enumerate}[leftmargin=*]
		\item The ordering region $\{(k,x)\in\TT\times\ZZ_+ : x\in R^O_k\}$, the stopping region $\{(k,x)\in\TT\times\ZZ_+: x\in R^S_k\}$, and the continuation region $\{(k,x)\in\TT\times\ZZ_+ : x\in R^C_k\}$ are disjoint subsets of $\TT\times\ZZ_+$. 
		\item $\tpi^*$ is a policy such that if $X^{\tpi^*}_k\in R^S_k$, then we stop; if $X^{\tpi^*}_k\in R^C_k$, then we continue without taking an action; if $X^{\tpi^*}_k\in R^O_k$, then we place an order and increase inventory to an order-up-to level. Hence, for each inventory level in $R^O_k$, there exists an optimal order-up-to level. 
	\end{enumerate}
\end{prop}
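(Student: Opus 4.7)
The plan is to verify three things in order: (i) pairwise disjointness of the three regions, which is essentially bookkeeping against the DP recursion~\eqref{Equation: tilde(V)(k,x_k) DP Definition}; (ii) attainment of the infimum defining $R^O_k$, so that the phrase ``order-up-to level'' makes sense, which requires a coercivity argument leveraging $\bar c>-c_4$; (iii) identification of the DP-optimal actions from Corollary~\ref{Cor: DP solves tilde(V)*} with the prescribed actions on each region.

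For (i) I would chase the defining inequalities. If $x\in R^O_k$, then $\tJ(k,x)\le \inf_{m\ge 1}(c(m)+\tG(k,x+m))<\tS(x)$ rules out $x\in R^S_k$, while $\tJ(k,x)<\tG(k,x)$ forces $\tV(k,x)\le \tJ(k,x)<\tG(k,x)$ and rules out $x\in R^C_k$. Finally $R^S_k\cap R^C_k=\emptyset$ because $\tV(k,x)=\tS(x)$ on $R^S_k$, whereas $\tV(k,x)=\tG(k,x)<\tS(x)$ on $R^C_k$. A case split on whether $\tS(x)\le \tJ(k,x)$ and on whether $\inf_{m\ge 1}(c(m)+\tG(k,x+m))<\tG(k,x)$ additionally shows that every $(k,x)$ lies in one of the three regions, so the partition is complete and the policy prescription is well-defined.

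For (ii) I would establish the coercivity of $m\mapsto c(m)+\tG(k,x+m)$ on $\ZZ_+$. A backward induction initialized at $\tV(T,z)=c_4 z$ yields a linear lower bound of the form $\tV(\ell,z)\ge c_4 z-B_\ell$ for some finite $B_\ell\ge 0$, using $\EE[(y-(N_{\ell+1}-N_\ell))^+]\le y$ when $c_4<0$. Plugging this into $\tG(k,y)=\tC(k,y)+e^{-\delta}\EE[\tV(k+1,\cdot)]$, together with the constant lower bound on $\tC$ that follows from $H\ge 0$, $L\ge 0$, and boundedness of $\EE[\int_k^{k+1}c_3\,dN]$, gives
\[
c(m)+\tG(k,x+m)\ge (\bar c+e^{-\delta}c_4)\,m+\text{const}.
\]
The standing assumption $\bar c>-c_4$ (combined with $e^{-\delta}c_4\ge c_4$ when $c_4<0$) makes the coefficient strictly positive, so the right-hand side diverges as $m\to\infty$ and the infimum is attained at some finite $m^*\ge 1$; then $x+m^*$ is the optimal order-up-to level.

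For (iii) I would invoke Corollary~\ref{Cor: DP solves tilde(V)*}: $\tpi^*$ stops at epoch $k$ iff $\tS(X^{\tpi^*}_k)\le \tJ(k,X^{\tpi^*}_k)$, which is precisely $X^{\tpi^*}_k\in R^S_k$. On $R^C_k$ we have $\tJ(k,x)=\tG(k,x)=c(0)+\tG(k,x)$, so the infimum in $\tJ$ is achieved at $m=0$ and the optimal action is to continue without ordering. On $R^O_k$ the infimum in $\tJ$ is strictly less than both $\tG(k,x)$ and $\tS(x)$, so it is achieved by some $m^*\ge 1$ supplied by (ii), corresponding to raising inventory to the order-up-to level $x+m^*$. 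I expect the main obstacle to be the coercivity step in (ii): without the backward induction giving the linear lower bound on $\tV$ and without the sign condition $\bar c+c_4>0$, one cannot rule out that stocking and scrapping large quantities drives $\tJ(k,x)$ to $-\infty$, which would invalidate the very existence of an order-up-to level.
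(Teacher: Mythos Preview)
Your proposal is correct and follows the same skeleton as the paper's proof: a three-way case split on the pair of inequalities $\tS(x)\lessgtr\tJ(k,x)$ and $\tG(k,x)\lessgtr\inf_{m\ge 1}(c(m)+\tG(k,x+m))$ for part~1, and an appeal to Corollary~\ref{Cor: DP solves tilde(V)*} to read off the optimal action on each region for part~2.

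The notable difference is your step~(ii). The paper's proof does \emph{not} argue that the infimum defining $\tJ_1(k,x)=\inf_{m\ge 1}(c(m)+\tG(k,x+m))$ is actually attained; it simply writes ``it is optimal to place an order of $\widetilde\mu^*_k\circ X_k^{\tpi^*}\ge 1$ where $\widetilde\mu^*_k$ is defined as in Corollary~\ref{Cor: DP solves tilde(V)*},'' i.e.\ it treats attainment as already packaged into the corollary. Your coercivity argument---propagating a linear lower bound $\tV(\ell,z)\ge c_4 z-B_\ell$ backward from $\tV(T,z)=c_4 z$, then combining with $c(m)\ge \bar c m$ and $\bar c+c_4>0$---genuinely fills that gap and makes the existence of an order-up-to level self-contained. (A small refinement: when $c_4>0$ the discount factor erodes the slope to $e^{-\delta(T-\ell)}c_4$ rather than $c_4$, but this is still nonnegative, so $\bar c+e^{-\delta}\cdot\text{slope}\ge \bar c>0$ and coercivity holds either way; only the $c_4<0$ case actually needs the hypothesis $\bar c>-c_4$.) In short, your route is the paper's route plus a rigor step the paper leaves implicit.
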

The proof of Proposition \ref{Proposition: Solution to D8F} is in Appendix \ref{Section: Proofs of Subsection Structural Results for D8F}.\\
Proposition \mbox{\ref{Proposition: Solution to D8F}} states an intuitive result that, among all different order amounts and stopping times, we can describe an optimal solution by using disjoint ordering, continuation and stopping regions. Moreover, breaking the ties between costs does not cause any problem so that the regions can be disjoint.

Next, we discuss two special cases for the environment described by Proposition \mbox{\ref{Proposition: Solution to D8F}}. The first special case of our model can be obtained by removing the possibility to stop, or in other words, by requiring the manufacturer to continue until the end-of-horizon. In such a case, a time-dependent $(s,S)$ policy is optimal to characterize the ordering decisions. We refer the reader to \citet[Theorem 4.2]{beyer2010markovian} for the optimality of $(s,S)$ policy in a more general structure, except that we compute costs in continuous time. Still, \citet[Theorem 4.2]{beyer2010markovian} applies here since \cite{frenk2019exact} shows that the one-period operation cost $C$ is convex.

The second special case of our model is obtained by setting the setup cost as $K=0$, allowing backordering, and restricting the penalty cost to take place at review periods only (rather than using a stopping time $\sigma_x$). In such a case, an optimal stopping policy is the two-sided threshold policy that stops the process at time $t$ if the inventory level $x_t\notin[\underbar{x}_t,\bar{\mathrm{x}}_t]$, where $\underbar{x}_t, \bar{\mathrm{x}}_t\in\RR$ are constants. We refer the reader to \citet[Proposition 12]{oh2016characterizing} for the proof. Under our general framework, however, we do not observe such thresholds in the numerical analysis. For instance, when the inventory level drops to zero, the manufacturer may wish to place an order rather than to stop, eliminating the possibility of a lower threshold.

Finally, it is worth mentioning that the non-stationary parameters and the existence of a positive setup cost limit the known analyses to be implemented. We illustrate these limitations by solving $D/\infty/F$ numerically with a data set presented by \cite{frenk2019exact} and $K=2000$. Table \ref{Table: Subset of Regions for Remark} demonstrates the ordering, stopping and continuation regions. We can see that the ordering and stopping regions are intertwined (e.g.,\@ $k=23$). Therefore, even if there exists a threshold re-order level, it is possible that we stop when inventory is below the threshold, rather than placing an order. Moreover, the regions are neither convex nor do they have monotone boundaries. The stopping region does not have monotone boundary possibly because of the decreasing intensity rate and the cost of outside source. On the one hand, if the intensity rate declines over time, then the manufacturer may not take a preventive stopping action (which may prevent lost-sales) although the inventory is small (e.g.,\@ $x\in\{3,4,5\}$). This makes the stopping region smaller in the last stages. On the other hand, when the cost of outside source declines over time, the stopping decision becomes more attractive, making the stopping region larger. Hence, the stopping region may be affected oppositely by the intensity rate and the cost of outside source.

\begin{table}[h!]
	\centering
	\begin{small}
	\renewcommand{\arraystretch}{0.9}
	\begin{tabular}{|c|ccccccccccc|}
		\hline
		\diagbox[width=3em]{$x$}{$k$} & 0                    & 18                    & 22                    & 23                    & 24                    & 25                    & 40                    & 43                    & 44                    & 48                    & 49                    \\ \hline
		\multicolumn{1}{|l|}{846} & \multicolumn{1}{l}{} & \multicolumn{1}{l}{}  & \multicolumn{1}{l}{}  & \multicolumn{1}{l}{}  & \multicolumn{1}{l}{}  & \multicolumn{1}{l|}{} & \multicolumn{1}{l}{}  & \multicolumn{1}{l}{}  & \multicolumn{1}{l}{}  & \multicolumn{1}{l}{}  & \multicolumn{1}{l|}{} \\ \cline{8-8}
		770                       &                      &                       &                       &                       &                       &                       & \multicolumn{1}{c|}{} &                       & \multicolumn{2}{c}{Stop}                      &                       \\ \cline{9-9}
		375                       &                      &                       &                       &                       &                       &                       &                       & \multicolumn{1}{c|}{} &                       &                       &                       \\ \cline{10-10}
		336                       &                      &                       &                       &                       &                       &                       &                       &                       & \multicolumn{1}{c|}{} &                       &                       \\ \cline{11-11}
		327                       &                      &                       &                       &                       & \multicolumn{3}{c}{Continue}                                          &                       &                       & \multicolumn{1}{c|}{} &                       \\ \cline{2-4} \cline{12-12} 
		7                         &                      &                       & \multicolumn{1}{c|}{} &                       &                       &                       &                       &                       &                       &                       &                       \\
		6                         &                      &                       & \multicolumn{1}{c|}{} &                       &                       &                       &                       &                       &                       &                       &                       \\ \cline{5-5} \cline{8-9}
		5                         &                      &                       &                       & \multicolumn{1}{c|}{} &                       & \multicolumn{1}{c|}{} &                       & \multicolumn{1}{c|}{} &                       &                       &                       \\ \cline{7-7}
		4                         &                      & \multicolumn{2}{c}{Order}                     & \multicolumn{1}{c|}{} & \multicolumn{1}{c|}{} &                       &                       & \multicolumn{1}{c|}{} &                       &                       &                       \\ \cline{6-6} \cline{12-12} 
		3                         &                      &                       &                       & \multicolumn{1}{c|}{} &                       &                       &                       & \multicolumn{1}{c|}{} &                       & \multicolumn{1}{c|}{} &                       \\ \cline{5-5} \cline{10-11}
		2                         &                      &                       & \multicolumn{1}{c|}{} &                       &                       &                       &                       &                       &                       &                       &                       \\
		1                         &                      &                       & \multicolumn{1}{c|}{} &                       &                       & \multicolumn{3}{c}{Stop}                                              &                       &                       &                       \\ \cline{4-4}
		0                         &                      & \multicolumn{1}{c|}{} &                       &                       &                       &                       &                       &                       &                       &                       &                       \\ \hline
	\end{tabular}
	\end{small}
	\caption{An illustrative optimal solution and the corresponding regions. Horizontal axis represents time $k$ and vertical axis represent inventory level $x$.}
	\label{Table: Subset of Regions for Remark}
\end{table}

\FloatBarrier

\subsection{Structural Results Regarding Stopping Times} \label{Subsection: Other Useful Structural Results}
Stopping time plays a crucial role in the end-of-life inventory management problem. With the scope defined in this paper, stopping time indicates the time after which all the subsequent demand is to be satisfied by an alternative source. The alternative source can be any origin (own, any supplier, a competitor, a remanufacturing repair shop, etc. -- we simply call it the outside source) which agreed to cover the demand for the part considered until the end of horizon. Hence, it is important for the end-of-life management executives to warn the outside source on the possibilities of the time to switch  - a crucial information for the operation of the outside source. Hence, in this section we aim to obtain some structural results on stopping times so as to smooth out the problems that may occur during the realization of this transition. 

The results we have in the following subsections are valid for any $t$, where $t$ represents the time at which we review the inventory system. The only information we need to know is the state information (time $t$ and inventory $x$) for the details to work on. Finally, we note that even if we allow $t$ to be an element of $[0,T]$, we present all the results for $t=0$ to avoid additional notation.

\subsubsection{Bounds on the Switching Time}\label{Subsubsection: Bound on the Switching Time}
We consider the manufacturer at a later stage in the end-of-life phase (any time $t$) when existing inventory looks sufficient, and no further order is expected to be placed. We aim to extract bounds on the time to stop when the manufacturer is willing to set an exact time (which is called the switching time) and communicate it with the provider of outside source. 

We consider $t=0$ without loss of generality. Let $\cC(x,\tau)$ denote the total cost incurred if the inventory is $x$ and if we decide to stop at $\tau\in \TT$. Then, the function $\cC:\ZZ_+\times\TT\rightarrow \RR_+$ is defined by
\begin{align*}
\cC(x,\tau):=&\EE\sqb{ \sum_{k=0}^{\tau-1} e^{-\delta k} C(k,X_k)+ e^{-\delta \tau} S(\tau,X_\tau) }
\end{align*}
which is equivalent to
\begin{align} \label{Equation: cC(x,tau) Defn}
\cC(x,\tau) = & c_{1} \EE\sqb{\int_{0}^{\tau} e^{-\delta u} (x-N_u)^+ du }
+ \EE \sqb{ \int_{\tau \wedge \sigma_x}^{\tau} e^{-\delta u} c_{2} (u) dN_u }  \notag \\
&+ \EE\sqb{\int_{\tau}^{T} e^{-\delta u} c_{3}(u) dN_u }
+ c_{4} e^{-\delta \tau} \EE[(x- N_\tau)^+]. 
\end{align}

While offering the next insights, we allow $\tau$ to be continuous in order to take derivative of $\cC$ with respect to $\tau$. Hence, we use the same form of $\cC(x,\tau)$ in \mbox{\eqref{Equation: cC(x,tau) Defn}} while allowing $\tau$ to be any value in $[0,T]$. In other words, we extend the definition of $\cC$ as function $\cC: \ZZ_+\times[0,T]\rightarrow\RR_+$ given by \mbox{\eqref{Equation: cC(x,tau) Defn}}. We derive the insights by finding upper and lower bounds on the solution of the following problem:
\begin{equation}\label{Equation: V^(S/1/Z) fixed x_0}
\inf_{\tau\in[0,T]} \cC(x,\tau).
\end{equation}
To analyze the properties of $\cC$, we make the same general assumptions with \mbox{\cite{frenk2019exact}}: \\
\hypertarget{NON-INC}{NON-INC:} The functions $c_3$ and $u\mapsto \widetilde{c}_2(u):=c_2(u)-c_3(u)$ are right-continuous with left-limits, piece-wise smooth (that is, differentiable except at finitely many points) and non-increasing. \\
\hypertarget{POS}{POS:} The functions $c_3$ and $\widetilde{c}_2$ are non-negative (maps from $[0,T]$ to $\RR_+$) and the quantities $c_4$ and $c_1-\delta c_4$ are in $\RR_+$. 

The assumption POS means that the parameters are positive and it is quite general as we are already minimizing the costs. On the other hand, NON-INC states that the functions are non-increasing. Such assumption is more relevant in the end-of-life phase. \mbox{The cost of outside source, $c_3$,} decreases over time, as the manufacturer is better prepared to use such a source. Moreover, the lost sales penalty, $\widetilde{c}_2$, decreases over time, as the manufacturer is better prepared for the possibility of an insufficient inventory while nearing the end of horizon.

Let $\tau^*\in[0,T]$ denote a best switching time that attains the infimum in \eqref{Equation: V^(S/1/Z) fixed x_0}. The following two propositions show upper and lower bounds on $\tau^*$. 

\begin{prop} \label{Proposition: Upper bound on tau^*}
	Assume NON-INC and POS. Let $\tau^{ub}$ be the smallest $\tau$ value satisfying
	\begin{equation} \label{Equation: Upper bound on tau^*}
	\tilde{c}_2(T) \geq \PP\cb{N_\tau \leq x-1} [c_{2}(\tau) + c_{4}  ].
	\end{equation}
	If the inequality does not hold for any $\tau\in[0,T]$, then let $\tau^{ub}=T$. Then, $\tau^* \leq \tau^{ub}$.
\end{prop}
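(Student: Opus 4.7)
The plan is to show that $\cC(x,\cdot)$ is non-decreasing on $[\tau^{ub},T]$; this immediately forces any minimizer of \eqref{Equation: V^(S/1/Z) fixed x_0} to lie in $[0,\tau^{ub}]$, proving $\tau^*\leq \tau^{ub}$ (the case $\tau^{ub}=T$ is trivial). The monotonicity will be established by differentiating $\cC(x,\tau)$ in $\tau$ and checking that the derivative is non-negative once the defining inequality of $\tau^{ub}$ first kicks in.

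First I would rewrite each of the four summands of \eqref{Equation: cC(x,tau) Defn} as a deterministic integral in $\tau$. The holding, outside-source and scrap terms follow from Fubini and the Poisson mass function. For the lost-sales term, I would note that
\[
\int_{\tau\wedge\sigma_x}^{\tau} e^{-\delta u} c_2(u)\, dN_u = \int_{0}^{\tau} e^{-\delta u} c_2(u)\, \1_{\{N_{u-}\geq x\}}\, dN_u,
\]
whose integrand is predictable; applying the Poisson compensator formula $\EE[\int_0^{\tau} H_u\, dN_u] = \int_0^{\tau} \EE[H_u]\lambda(u)\,du$ then gives $\int_0^{\tau} e^{-\delta u} c_2(u)\lambda(u)\PP\{N_u \geq x\}\,du$. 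Differentiating term by term (the only delicate step being $\frac{d}{d\tau}\EE[(x-N_\tau)^+]=-\lambda(\tau)\PP\{N_\tau\leq x-1\}$, which follows from the Poisson mass function by a telescoping sum) and using $\PP\{N_\tau \geq x\} = 1 - \PP\{N_\tau \leq x-1\}$, I expect to obtain
\[
\frac{d\cC}{d\tau}(x,\tau) = e^{-\delta\tau}\left[(c_1-\delta c_4)\EE[(x-N_\tau)^+] + \lambda(\tau)\big(\widetilde{c}_2(\tau) - (c_2(\tau)+c_4)\PP\{N_\tau\leq x-1\}\big)\right].
\]

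For the sign argument, POS gives $c_1-\delta c_4\geq 0$ and $\EE[(x-N_\tau)^+]\geq 0$, so the first bracketed summand is non-negative. For the second, I would note that $c_2=\widetilde{c}_2+c_3$ is non-increasing by NON-INC and that $\PP\{N_\tau\leq x-1\}$ is trivially non-increasing in $\tau$, so $\tau \mapsto (c_2(\tau)+c_4)\PP\{N_\tau\leq x-1\}$ is non-increasing. Hence the inequality in \eqref{Equation: Upper bound on tau^*} persists for every $\tau\in[\tau^{ub},T]$, and combined with $\widetilde{c}_2(\tau)\geq\widetilde{c}_2(T)$ it upgrades to $\widetilde{c}_2(\tau)\geq (c_2(\tau)+c_4)\PP\{N_\tau\leq x-1\}$ on that interval. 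The derivative is therefore non-negative on $[\tau^{ub},T]$ off the finite set where $\lambda,c_2,c_3$ fail to be smooth, and the continuity of $\cC(x,\cdot)$ yields the desired monotonicity. The main obstacle is the very first step: carefully reducing the lost-sales stochastic integral to a predictable form and applying the compensator, so that after differentiation the quantities $\widetilde{c}_2(\tau)$ and $\PP\{N_\tau \leq x-1\}$ appearing in \eqref{Equation: Upper bound on tau^*} emerge naturally.
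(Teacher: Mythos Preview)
Your proposal is correct and follows essentially the same approach as the paper: compute $\partial_\tau \cC(x,\tau)$, observe that the $(c_1-\delta c_4)\EE[(x-N_\tau)^+]$ term is nonnegative under POS, and show the remaining bracket is nonnegative on $[\tau^{ub},T]$ by combining $\widetilde c_2(\tau)\geq\widetilde c_2(T)$ with the monotonicity of $\tau\mapsto (c_2(\tau)+c_4)\PP\{N_\tau\leq x-1\}$. The only cosmetic differences are that the paper routes through an auxiliary lemma and the form $\PP\{N_\tau\geq x\}\widetilde c_2(\tau)\geq \PP\{N_\tau\leq x-1\}[c_3(\tau)+c_4]$ before simplifying to your expression, and it verifies the monotonicity of $(c_2(\tau)+c_4)\PP\{N_\tau\leq x-1\}$ by explicit differentiation rather than your cleaner ``product of nonnegative non-increasing functions'' observation.
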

The proof of Proposition \ref{Proposition: Upper bound on tau^*} is in Appendix \ref{Section: Proof of Results in Subsubsection: Inventory at 0 affects switching time}. 
\begin{prop}\label{Proposition: Lower bound on tau^*}
	Assume NON-INC and POS. Also assume that $\lambda$ is a non-increasing function. Let $\tau^{lb}$ be the largest $\tau$ value satisfying $\lambda(\tau) \geq 1$ and
	\begin{equation} \label{Equation: Lower bound on tau^*}
	\PP\cb{N_\tau\leq x-1} [c_{2}(\tau) + c_{4}  ] \geq x(c_{1}-\delta c_{4}) + \tilde{c}_2(0).
	\end{equation}
	If the inequality does not hold for any $\tau\in[0,T]$, then let $\tau^{lb}=0$. Then, $\tau^{lb}\leq \tau^*$.
\end{prop}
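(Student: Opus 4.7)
The plan is to establish $\tau^*\geq\tau^{lb}$ by showing that $\cC(x,\cdot)$ is non-increasing on $[0,\tau^{lb}]$, which I would deduce from the pointwise sign condition $\partial\cC/\partial\tau(x,\tau)\leq 0$. First I would differentiate each of the four summands in \eqref{Equation: cC(x,tau) Defn}. The holding, outside-source, and scrap terms give, respectively, $c_1 e^{-\delta\tau}\EE[(x-N_\tau)^+]$, $-e^{-\delta\tau}c_3(\tau)\lambda(\tau)$, and $-\delta c_4 e^{-\delta\tau}\EE[(x-N_\tau)^+]-c_4 e^{-\delta\tau}\lambda(\tau)\PP\{N_\tau\leq x-1\}$, where the last one uses the identity $\frac{d}{d\tau}\EE[(x-N_\tau)^+]=-\lambda(\tau)\PP\{N_\tau\leq x-1\}$, obtained by differentiating the Poisson probability series term by term.

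The delicate term is the expected lost-sales cost. Since $\mathbf{1}_{\{\sigma_x<u\}}=\mathbf{1}_{\{N_{u-}\geq x\}}$ is predictable, the compensator formula for the non-homogeneous Poisson process gives
\begin{equation*}
\EE\!\left[\int_{\tau\wedge\sigma_x}^{\tau}e^{-\delta u}c_2(u)\,dN_u\right]=\int_0^{\tau}e^{-\delta u}c_2(u)\PP\{N_u\geq x\}\lambda(u)\,du,
\end{equation*}
whose $\tau$-derivative is $e^{-\delta\tau}c_2(\tau)\PP\{N_\tau\geq x\}\lambda(\tau)$. Summing the four contributions and using $\PP\{N_\tau\geq x\}=1-\PP\{N_\tau\leq x-1\}$ yields, at any point of continuity of $c_2,c_3,\lambda$,
\begin{equation*}
\tfrac{\partial\cC}{\partial\tau}(x,\tau)=e^{-\delta\tau}\Bigl\{(c_1-\delta c_4)\EE[(x-N_\tau)^+]+\lambda(\tau)\tilde c_2(\tau)-\lambda(\tau)\PP\{N_\tau\leq x-1\}[c_2(\tau)+c_4]\Bigr\}.
\end{equation*}

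To show this quantity is non-positive, I would dominate the first two terms inside the braces. Under POS the coefficient $c_1-\delta c_4$ is non-negative and $\EE[(x-N_\tau)^+]\leq x$ trivially, giving $(c_1-\delta c_4)\EE[(x-N_\tau)^+]\leq x(c_1-\delta c_4)$; under NON-INC, $\tilde c_2(\tau)\leq \tilde c_2(0)$; and with the assumption $\lambda(\tau)\geq 1$, the expression inside the braces is bounded above by $\lambda(\tau)\bigl\{x(c_1-\delta c_4)+\tilde c_2(0)-\PP\{N_\tau\leq x-1\}[c_2(\tau)+c_4]\bigr\}$, which is $\leq 0$ precisely when \eqref{Equation: Lower bound on tau^*} holds at $\tau$.

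Finally, to upgrade from a pointwise to an interval statement, observe that the LHS of \eqref{Equation: Lower bound on tau^*} is a product of two non-negative functions of $\tau$: $\PP\{N_\tau\leq x-1\}$ is non-increasing because $N_\tau$ is a.s.\ non-decreasing, and $c_2(\tau)+c_4=\tilde c_2(\tau)+c_3(\tau)+c_4$ is non-increasing by NON-INC. Hence if the inequality holds at $\tau^{lb}$ it holds on $[0,\tau^{lb}]$; likewise $\lambda(\tau)\geq\lambda(\tau^{lb})\geq 1$ throughout, by non-increasingness of $\lambda$. Consequently $\partial\cC/\partial\tau(x,\tau)\leq 0$ at every point of differentiability in $[0,\tau^{lb}]$, and since only finitely many kinks can occur by piecewise smoothness, absolute continuity of $\cC(x,\cdot)$ forces it to be non-increasing on $[0,\tau^{lb}]$, whence $\tau^*\geq\tau^{lb}$. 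The main obstacle is the careful derivative of the lost-sales integral via the predictable compensator; once that is in hand, the rest is monotonicity bookkeeping.
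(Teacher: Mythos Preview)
Your proof is correct and follows essentially the same route as the paper: derive the same expression for $\partial\cC/\partial\tau$, bound $\EE[(x-N_\tau)^+]\leq x$ and $\tilde c_2(\tau)\leq\tilde c_2(0)$, use $\lambda(\tau)\geq 1$ to factor, and then propagate the inequality to all of $[0,\tau^{lb}]$ via monotonicity of $\tau\mapsto\PP\{N_\tau\leq x-1\}[c_2(\tau)+c_4]$. The only cosmetic difference is that you compute the derivative directly from \eqref{Equation: cC(x,tau) Defn} via the compensator, whereas the paper differentiates an equivalent integral representation of $\cC$ borrowed from \cite{frenk2019exact}; the resulting formula and the bounding steps coincide.
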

The proof of Proposition \ref{Proposition: Lower bound on tau^*} is in Appendix \ref{Section: Proof of Results in Subsubsection: Inventory at 0 affects switching time}. 

We extract the following insights for the bounds on the best switching time $\tau^*$. 
\begin{itemize}[leftmargin=*]
	\item The upper and lower bounds depend on the inventory level at the later stage, $x$, the demand until we stop, $N_\tau$, and the cost of outside source, $c_2(\tau)$.
	\item The upper bound $\tau^{ub}$ can be less than the remaining time in the horizon, $T$, if the number of arrivals up to $\tau^{ub}$ sufficiently exceeds the inventory level $x$ (so that the term $\PP\{N_{\tau^{\text{ub}}} \leq x-1\}$ is small) and if the outside source is cheap (so that the term $c_2(\tau^{\text{ub}})$ is small). Hence, an insufficient inventory level and a decreasing cost of outside source prompt the use of such a source.
	\item The observation for the lower bound complements the one for the upper bound. Indeed, $\tau^{lb}$ can be larger than $0$ if the number of arrivals before $\tau^{lb}$ is considerably less than inventory level $x$ (so that the term $\PP\{N_{\tau^{\text{lb}}} \leq x-1\}$ is small) and if the cost of outside source is still high (so that the term $c_2(\tau^{\text{lb}})$ is large). In such a case, it may be better to delay the use of the outside source.
\end{itemize}
Hence, the manufacturer, while being at a later stage in the phase, is more motivated to stop before the last period $T$ as long as the inventory level is sufficient to prevent ordering but not enough to cover the demand, and the outside source is relatively cheap. The properties above enable the manufacturer to communicate with the provider of the outside source.

\subsubsection{Stopping Times and Final Order Quantity}
\label{Subsubsection: S is increasing in tau}
Our next analysis reveals that when the system is at a later stage of the horizon, the decision on the time to stop affects the size of the final order. In turn, this creates a domino effect between order-up-to levels, where the previous levels are impacted by the subsequent levels. Thus, order quantities and stopping times are related. Such relation motivates the use of a model that incorporates both multiple orders and stopping time.

To materialize this relation and use the results for more information on the stopping times, we analyze the manufacturer's decisions at a later stage when a final order and a time to stop are to be decided together so that the outside source provider can be informed. Without loss of generality, we define $t=0$ as the current period, since one can update the non-stationary parameters to accommodate for a change in current time. Here, we aim to extract the relation between the switching time and the final order size. Thus, the objective is to choose an order amount $m$ and a switching time $\tau$ that solve
\begin{equation*} 
\inf_{\substack{\tau\in [0,T], \\ m\in\ZZ_+} } \of{ c(m) + \cC(m+x_0,\tau) }, \quad x_0\in\II,
\end{equation*}
where $c$ is the ordering cost function defined in \mbox{\eqref{Equation: Order cost function}} and $\cC$ is the combined operation cost function defined in \mbox{\eqref{Equation: cC(x,tau) Defn}}. By proceeding as in \cite{frenk2019exact}, we can show that $x\mapsto \cC(x,\tau)$ is a discrete-convex function for each fixed $\tau$, under the assumptions NON-INC and POS above. Then, we can follow the arguments presented by \cite{porteus2002foundations} and state that the $(s(\tau),S(\tau))$-policy is an optimal policy to describe the ordering decision, where the order-up-to level $S(\tau)$ and the re-order level $s(\tau)$ depend on the switching time $\tau$. Moreover, the following proposition shows that $\tau\mapsto S(\tau)$ is a non-decreasing function when suitable conditions hold. \vspace{2mm}

\begin{prop}\label{Proposition: S is increasing in tau}
	Assume NON-INC and POS. Also assume that $\lambda$ is a non-increasing function. Then, for every $\epsilon, \tau_1, \tau_2\in[0,T]$ such that (i) $\tau_1 < \tau_2$, (ii) $S(\tau_1) \geq S(\tau_2)$, (iii) $S(\tau_1) \leq \Lambda(\tau_2)$ and
	$$
	(iv)\,\, c_{1} \leq \sqb{\displaystyle\frac{\Lambda(u) - S(\tau_1)}{\Lambda(u)}} \lambda(u)c_{3}(u), \text{ for all } u\in[\tau_2,\tau_2+\epsilon],
	$$
	we have $S(\tau_2) \leq S(\tau_2+\epsilon)$. 
\end{prop}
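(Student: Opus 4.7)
I will view $S(\tau)$ as the unconstrained minimizer of the discrete-convex map $y\mapsto \bar{c}\,y+\cC(y,\tau)$ and reduce the claim to a sign condition on the $\tau$-derivative of the discrete first difference $\Delta(y,\tau):=\cC(y+1,\tau)-\cC(y,\tau)$. Under NON-INC and POS, the argument of \cite{frenk2019exact} shows that $y\mapsto \cC(y,\tau)$ is discrete-convex, so $S(\tau)$ satisfies $\Delta(S(\tau)-1,\tau)\le -\bar{c}\le \Delta(S(\tau),\tau)$. Since $y\mapsto \Delta(y,\tau)$ is non-decreasing, showing $\Delta(S(\tau_2)-1,\tau_2+\epsilon)\le -\bar{c}$ is sufficient to conclude $S(\tau_2+\epsilon)\ge S(\tau_2)$, and optimality at $\tau_2$ already gives $\Delta(S(\tau_2)-1,\tau_2)\le -\bar{c}$. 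Thus the whole argument reduces to showing that $\tau\mapsto \Delta(S(\tau_2)-1,\tau)$ is non-increasing on $[\tau_2,\tau_2+\epsilon]$.

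\textbf{Derivative computation.} Rewriting the lost-sales piece of $\cC$ as $\int_0^\tau e^{-\delta u}c_{2}(u)\mathbf{1}_{u>\sigma_y}\,dN_u$ and using the density $\lambda(u)\PP\{N_u=y\}$ of the $(y+1)$-st arrival time $\sigma_{y+1}$, I would first derive
\[
\Delta(y,\tau)=c_{1}\!\int_0^\tau\! e^{-\delta u}\PP\{N_u\le y\}\,du-\!\int_0^\tau\! e^{-\delta u}c_{2}(u)\lambda(u)\PP\{N_u=y\}\,du+c_{4}e^{-\delta\tau}\PP\{N_\tau\le y\}.
\]
Differentiating in $\tau$ and using the Poisson identity $\partial_\tau\PP\{N_\tau\le y\}=-\lambda(\tau)\PP\{N_\tau=y\}$, one obtains
\[
\partial_\tau\Delta(y,\tau)=e^{-\delta\tau}\bigl[(c_{1}-\delta c_{4})\PP\{N_\tau\le y\}-(c_{2}(\tau)+c_{4})\lambda(\tau)\PP\{N_\tau=y\}\bigr].
\]

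\textbf{Closing via a Poisson tail bound.} The key estimate is $\PP\{N_\tau\le y\}/\PP\{N_\tau=y\}=\sum_{j=0}^{y}y(y-1)\cdots(y-j+1)/\Lambda(\tau)^j\le \Lambda(\tau)/(\Lambda(\tau)-y)$ whenever $\Lambda(\tau)>y$, obtained by bounding each term by $(y/\Lambda(\tau))^j$ and summing the geometric series. Set $y=S(\tau_2)-1$ and $\tau\in[\tau_2,\tau_2+\epsilon]$: condition (ii) gives $y\le S(\tau_1)-1<S(\tau_1)$; condition (iii) together with $\Lambda$ non-decreasing gives $\Lambda(\tau)\ge \Lambda(\tau_2)\ge S(\tau_1)>y$, validating the tail bound and the inequality $(\Lambda(\tau)-y)/\Lambda(\tau)\ge (\Lambda(\tau)-S(\tau_1))/\Lambda(\tau)$; POS gives $c_{1}-\delta c_{4}\le c_{1}$; and the standing model assumption $c_{2}\ge c_{3}$ combined with $c_{4}\ge 0$ gives $c_{2}(\tau)+c_{4}\ge c_{3}(\tau)$. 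Chaining these with condition (iv) yields $c_{1}-\delta c_{4}\le (\Lambda(\tau)-y)/\Lambda(\tau)\cdot (c_{2}(\tau)+c_{4})\lambda(\tau)$, which combined with the Poisson tail bound makes the bracketed quantity in $\partial_\tau\Delta$ non-positive throughout $[\tau_2,\tau_2+\epsilon]$, establishing the needed monotonicity.

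\textbf{Main obstacle.} The most delicate step is the derivative computation, where the lost-sales term involves the random hitting time $\sigma_y$; converting it into an integral against Lebesgue measure requires using the density of $\sigma_{y+1}$ (equivalently, the Poisson compensator of $N$). Once that formula is in hand, the Poisson tail bound is what converts the comparison between $\PP\{N_\tau\le y\}$ and $\PP\{N_\tau=y\}$ into exactly the form of condition~(iv); the appearance of $c_{3}$ rather than the sharper $c_{2}+c_{4}$ in (iv) is a convenient strengthening justified by the standing assumptions $c_{2}\ge c_{3}$ and $c_{4}\ge 0$.
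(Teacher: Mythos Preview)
Your argument is correct, and it reaches the conclusion by a genuinely different route than the paper's proof. The paper does not work directly with $\partial_\tau\Delta(y,\tau)$; instead it telescopes $\Delta_x\cC(S(\tau_2),\tau)=\Delta_x\cC(0,\tau)+\sum_{x=0}^{S(\tau_2)-1}\Delta_x^2\cC(x,\tau)$ and proves two auxiliary lemmata: one showing $\tau\mapsto\Delta_x\cC(0,\tau)$ is decreasing, and one showing each second difference $\tau\mapsto\Delta_x^2\cC(x-1,\tau)$ is decreasing. In the second-difference lemma, the factor $(\Lambda(u)-x)/\Lambda(u)$ that matches condition~(iv) appears from differentiating the Poisson pmf $u\mapsto\PP\{N_u=x\}$, via $\psi'(u)=-\lambda(u)\PP\{N_u=x\}[1-x/\Lambda(u)]$. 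You instead produce the same factor from the Poisson tail-ratio estimate $\PP\{N_\tau\le y\}/\PP\{N_\tau=y\}\le\Lambda(\tau)/(\Lambda(\tau)-y)$, applied to the single closed-form expression for $\partial_\tau\Delta(y,\tau)$. Your route is shorter---one derivative computation and one combinatorial bound, rather than two lemmata plus a telescoping sum---and it makes the logical reduction to the first-order characterization of $S(\tau)$ more transparent; as a bonus, your argument does not actually invoke the non-increasing assumption on $\lambda$, whereas the paper's second-difference lemma uses it in intermediate bounding steps. The paper's decomposition, on the other hand, isolates where each of the model assumptions (on $c_4$, $\tilde c_2$, $c_3$) enters, which has some expository value.
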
 

The proof of Proposition \ref{Proposition: S is increasing in tau} is in Appendix \ref{Section: Proof of Results in Subsubsection S is increasing in tau}. \\
Proposition \ref{Proposition: S is increasing in tau} states that small perturbations in switching time (from $\tau_1$ to $\tau_2$) may not affect the order-up-to level; however, moderate deviations (from $\tau_1$ to $\tau_2+\epsilon$) raise the level. If the manufacturer decides to stop at a later time without updating the level (conditions (\textit{i}) and (\textit{ii})), but expected total demand exceeds the level (condition (\textit{iii})), and the cost rate of outside source, $\lambda(u)c_3(u)$, is still high in the infinitesimal future (condition ($iv$)), then the order-up-to level increases. Proposition \ref{Proposition: S is increasing in tau} enables us to extract the following insights.
\begin{itemize}[leftmargin=*]
	\item The time when the manufacturer decides to stop has an impact on the previous order amounts.
	\item If the disposal time of the inventory is delayed moderately (in the sense of conditions ($i$)-($iv$)), then the final order amount that will be used to satisfy the demand increases.
	\item In case the outside source is not available before some time (say, $\tau_2$), then most likely we need to increase the order-up-to level.
\end{itemize} 
This analysis reveals that stopping time and order levels are alternatives and complements for managing the end-of-life inventory system. The use of both strategies simultaneously is likely to fine-tune the results, yielding less expected costs.

\subsubsection{Distribution of Optimal Stopping Time}\label{Subsubsection: Distribution of Stopping Time}
In the previous two subsections, we consider the time periods after which it is unlikely to place an order, and analyze possible properties of the switching time. We consider those structural results as important since they can be utilized to support decision-making in the later stages of the end-of-life problem. Here we go one-step further. Once the optimal solution strategy to the DP problem is obtained, we can trace the solution in any period, as explained by Proposition \ref{Proposition: Solution to D8F}. It turns out that one can do better: using the backwards trace, it is possible to obtain the distribution of the stopping time that is dictated by the optimal solution of DP.

To be more specific, optimal stopping decisions are functions of inventory level at designated times. Hence, the randomness of the stopping times is solely dependent on the demand process and optimal strategy implemented, where the latter can be summarized by when-to-order and how-much-to-order decisions together with stopping decisions. Given the discrete nature of time, it is possible to compute the distribution of stopping times. In Appendix \mbox{\ref{Sec: Computation of Distribution of stopping time}}, we show that we can compute the probabilities of the stopping time distribution efficiently by using the stated recursions. One can use the structure of the recursive relations to come up with the distribution of stopping times at any point in time given the inventory level. Of course, this might require an expensive operation; however, depending on the characteristic and value of the inventory carried, it might be reasonable to put the effort. \mbox{\cite{hur2018end}} describes such an environment where all the effort is spent to estimate the demand distribution for the whole horizon to implement an order policy. Thus, the comments made at the beginning of Section \mbox{\ref{Section: Structural Results}} regarding use of stopping times can be realized with the knowledge of the relevant probability distribution at any time.

\section{Numerical Analysis}
\label{Section: Numerical Analysis}

In Section \ref{Section: Structural Results} we demonstrate with some structural results that the flexibilities that can be considered within the end-of-life inventory management problem are promising. This section provides numerical results regarding the output of the dynamic programming algorithms and the analytical results presented earlier. For numerical calculations, we further assume that the cost of outside source is given as $c_{3}(u)=\bar{c}_{3} e^{-\gamma u}$, where $\bar{c}_{3}\in\RR_+$ is a constant and $\gamma\in\RR_+$ is the decline rate. Moreover, we assume that $c_2(u)=\bar{c}_2+c_{3}(u)$ for some $\bar{c}_2\in\RR_+$ which we interpret as the penalty of lost sales. We allow $c_4$ to be negative or positive. Finally, to facilitate computations, we assume that the intensity function $\lambda$ is piecewise constant whose value changes at every $t\in\cb{0,1,\dots,T-1}$ and it is constant during $[t,t+1)$. Table \ref{Table: Our numerical Values} shows the set of parameter values and Table \ref{Table: Intensity Functions} shows the set of intensity functions used in the numerical analysis. We note that the ranges as stated in Table \ref{Table: Our numerical Values} incorporate in relative terms the case data considered by \cite{frenk2019exact}. Additionally, we specify a fixed ordering cost and various forms of the intensity function to represent the rate of decrease, as presented in Table \ref{Table: Intensity Functions}. Note that for different cases, the demands are all comparable as the total expected demand over the horizon is kept constant. The parameter settings and their corresponding numbers are presented in Appendix \ref{Section: Number of Parameter Settings}.

We code our models by using MATLAB and run them on a laptop computer with an Intel(R) Core(TM) i7-7700HQ processor with 2.80GHz CPU. The computation of one-period operation cost $C(k,x)$ (for all $k$ and $x$ values) takes approximately 800 and 1750 seconds of CPU time when $T=50$ and $T=100$, respectively. The computation of DP algorithm takes approximately 25 and 60 seconds of CPU time when $T=50$ and $T=100$, respectively. We verified our code by comparing the output of benchmark models and the output presented by \cite{frenk2019exact}. 

\begin{table}[h!]
	\centering
	\begin{tabular}{|l|l|}
		\hline
		Parameter Name & Set of Values \\
		\hline 
		Unit Procurement Cost & $\bar{c}=100$ \\
		Setup Cost & $K\in\cb{0, 10\bar{c}, 50\bar{c} }$ \\
		Holding Cost & $c_{1}=0.01\bar{c}$ \\
		Penalty Cost & $\bar{c}_2 \in \cb{2\bar{c}, 10\bar{c} }$ \\ 
		$c_{3}(0)$ at time zero & $\bar{c}_3=2\bar{c}$  \\
		Discount of $c_{3}$ & $\gamma\in\cb{10^{-6}, 0.01}$ \\
		Scrapping Cost & $c_{4}\in\cb{\bar{c}/4, -\bar{c}/4 }$ \\ 
		Planning Horizon & $T\in\cb{50,100}$ \\ 
		Time Discount &$\delta\in\cb{10^{-6},0.005}$ \\
		Expected Total Demand & $\int_{0}^{T}\lambda(u)\,du=500$ \\
		Intensity Functions & Convex, Concave, Linear, Constant \\
		& Presented in Table \ref{Table: Intensity Functions}  \\
		\hline
	\end{tabular}
	\caption{\label{Table: Our numerical Values} Parameter values used in numerical analysis. The total number of parameter settings is 384. Base case parameters are $\bar{c}_2=2\bar{c}, \gamma=0.01, c_4 = \bar{c}/4, T=50, \delta=0.005$, with convex $\lambda$. } 
\end{table}

\begin{table}[h!]
	\centering
	\def\arraystretch{1.2}
	\begin{tabular}{|l|c|c|}
		\hline
		Function Type & \multicolumn{2}{c|}{Expression} \\
		\hline 
		& $T=50$ & $T=100$ \\ \cline{2-3} 
		Convex & $\lambda(t)=\lambda_0(0.9)^t$ & $\lambda(t)=\lambda_0(0.96)^t$ \\
		Concave & $\lambda(t)=\lambda_0 - (0.045 t)^3$ & $\lambda(t)=\lambda_0-(0.015 t)^3$ \\
		Linear & $\lambda(t)=\lambda_0-0.392 t$ & $\lambda(t)=\lambda_0-0.099 t$ \\
		Constant & $\lambda(t)=\lambda_0$ & $\lambda(t)=\lambda_0$ \\
		\hline
	\end{tabular}
	\caption{  Piecewise constant intensity functions $\lambda$ used in numerical analysis. The value of $\lambda(t)$ changes at every $t\in\cb{0,1,\dots,T-1}$ and it is constant during $[t,t+1)$. The initial point $\lambda_0\in\RR_+$ is selected such that expected total demand $\int_0^T \lambda(t)\,dt$ is equal to $500$.}
	\label{Table: Intensity Functions}
\end{table}

\FloatBarrier

We present our computational results in two subsections. In Subsection \ref{Subsection: Analysis of the Benefits of the Proposed Approach}, we compare the benefits of our approach with the benchmark models. In Subsection \ref{Subsection: Sensitivitiy Analysis}, we analyze the effects of problem parameters. These analyses give us further insights on how our approach can be used to handle the end-of-life management problem effectively.

\subsection{Analysis of the Benefits of the Proposed Approach }\label{Subsection: Analysis of the Benefits of the Proposed Approach}

In this subsection, we compare all models by using the parameter values in Table \ref{Table: Our numerical Values}. To be more specific, we consider the settings (as numbered) which are presented in Appendix \ref{Section: Number of Parameter Settings}. As comparison basis, we consider the percentage increase in the expected discounted total cost over the horizon for not employing a model which utilizes more flexibility (or flexibilities) over the assumed current model with those flexibilities. We summarize the benefits in four subsections: The first three subsections assess the contribution of any specific flexibility over the current one in an isolated manner, whereas the fourth subsection analyzes combination effects. 

We initially demonstrate the benefits over a base case with the parameters $\bar{c}_2=2\bar{c}=200, \gamma=0.01, c_4 = \bar{c}/4=25, T=50, \delta=0.005$, convex $\lambda$. The setting number for base case is 1 in Appendix \ref{Section: Number of Parameter Settings}. The base case is a setting which is one of the closest to the parameters used in Frenk et al (2019a). We then show results for all the settings and report minimum, average, and maximum percentage increase in the expected discounted cost if those flexibilities are not considered. We end each subsection with a remark summarizing the findings.

\subsubsection{Loss for Not Allowing Multiple Orders}

We first present the comparisons under the base case. Table \ref{Table: T1ZvsT8F} presents percent loss if the number of orders is limited to 1 compared to the possibility of multiple ordering under two cases: with no stopping time ($a=T$ as given in the taxonomy) and with stopping time ($a=D$). The case with stopping time is presented in parentheses for various values of $x$ and $K$.

\begin{table}[h!]
	\centering
	\begin{tabular}{|c|ccc|}
		\hline
		\diagbox[width=3em]{$K$}{$x$}  & 0     & 100   & 250 \\
		\hline
		0     & 17.2 (12.1)  & 21.4 (15.1)  & 31.1 (21.1) \\
		1000  & 6.3 (2.8)  & 8.6 (4.4)  & 15.3 (8.9) \\
		5000  & 0.5 (0.0)  & 2.6 (1.9)  & 9.3 (7.2) \\
		\hline
	\end{tabular}%
	\caption{\label{Table: T1ZvsT8F} Percentage difference $100\times\frac{T/1/Z - T/\infty/F}{T/\infty/F}$ for different initial inventory $x$ and setup cost $K$ values. The numbers in parentheses show the value of $100\times\frac{D/1/Z - D/\infty/F}{D/\infty/F}$ under the same parameter setting. Base case parameters are used. }
\end{table}%

\FloatBarrier

As expected, the use of stopping time is an effective tool as observed with lower percentages in parentheses. As expected, when $x=0$ and $K=5000$, we have small percentages indicating that the traditional approach, assuming a large fixed ordering cost and hence ordering only once, has a strong logic. However, with some initial inventory, one can observe that the penalty of not employing a more flexible approach can be significant. 

We present our results for all our runs in Table \ref{Table: D1ZvsD8F}. Note that we only report the expected percent loss figures under the case where we employ stopping time. For the moderate value of the fixed ordering cost ($K=1000$), the average penalty percentages for different initial inventory values are all above 10\%. When we analyze the settings where we attain maximum or minimum values, we notice that most are the settings where we assume a constant intensity rate over time (\#125, \#113, \#121). This is expected as the use of flexibilities under some settings are more pronounced when the system is almost stationary or have no effect for the remaining settings.

\begin{table}[h!]
	\centering
	\begin{tabular}{|c|ccc|ccc|ccc|}
		\hline
	 	\diagbox[width=3em]{$K$}{$x$}  &     & 0    &          &     & 100  &          &     & 250  &          \\ \hline
	 	&     &  \%  & Set\#    &     &   \% &  Set\#   &     &  \%  &  Set\# \\
		& Max & 60.4 & 125 & Max & 73.4 & 125 & Max & 70.5 & 62  \\
		0     & Avg & 24.7 &          & Avg & 29.7 &          & Avg & 32.3 &          \\
		& Min & 9.0  & 11  & Min & 11.2 & 11  & Min & 3.1  & 121 \\ \hline
		& Max & 31.9 & 125 & Max & 43.3 & 125 & Max & 45.3 & 62  \\
		1000  & Avg & 10.0 &          & Avg & 14.2 &          & Avg & 17.7 &          \\
		& Min & 1.6  & 11  & Min & 2.6  & 12  & Min & 0.0  & 121 \\ \hline
		& Max & 11.1 & 125 & Max & 22.3 & 125 & Max & 26.9 & 109 \\
		5000  & Avg & 1.9  &          & Avg & 6.2  &          & Avg & 10.5 &          \\
		& Min & 0.0  & 27  & Min & 0.0  & 113 & Min & 0.0  & 49  \\ \hline
	\end{tabular}
	\caption{ \label{Table: D1ZvsD8F} Percentage difference $100\times\frac{D/1/Z - D/\infty/F}{D/\infty/F}$ for different initial inventory $x$ and setup cost $K$ values. For each $x$ and $K$, we present maximum, average and minimum values over all parameter settings. Set\# shows the setting numbers that attain the maximum or minimum. Setting numbers are presented in Appendix \ref{Section: Number of Parameter Settings}. } 
\end{table}
\FloatBarrier

\begin{rem}\label{Remark: Allowing multiple orders is important}
	Allowing multiple orders is important for systems with reasonable fixed ordering cost. However, the advantages may be offset by the use of stopping time and/or delaying the first order when there is some initial inventory.
\end{rem}

\subsubsection{Loss for Not Utilizing Stopping Time}
We present the comparisons under the base case parameters in two tables. Table \ref{Table: T1ZvsS1Z} presents the percent loss if only one order is given at time zero under two cases: with a switchover time ($a=S$ as given in the taxonomy) as to no stopping time ($a=T$) and with a stopping time ($a=D$) as to a switchover time ($a=S$). The latter case is presented in parentheses for various values of $x$ and $K$. Table \ref{Table: T8FvsD8F} presents the percent loss if multiple orders are allowed with a stopping time ($a=D$) as to no stopping time ($a=T$).

\begin{table}[h!]
	\centering
	\begin{tabular}{|c|ccc|}
		\hline
		\diagbox[width=3em]{$K$}{$x$}  & 0     & 100   & 250 \\
		\hline
		0     & 0.7 (4.2)   & 0.8 (5.2)  & 1.2 (7.8) \\
		1000  & 0.6 (4.1)  & 0.8 (5.0)  & 1.1 (7.5) \\
		5000  & 0.6 (3.9)  & 0.7 (4.6)  & 1.0 (6.6) \\
		\hline
	\end{tabular}%
	\caption{\label{Table: T1ZvsS1Z} Percentage difference $100\times\frac{T/1/Z - S/1/Z}{S/1/Z}$ for different initial inventory $x$ and setup cost $K$ values. The numbers in parentheses show the value of $100\times\frac{S/1/Z - D/1/Z}{D/1/Z}$ under the same parameter setting. Base case parameters are used. }
\end{table}%

\begin{table}[h!]
	\centering
	\begin{tabular}{|c|ccc|}
		\hline
		\diagbox[width=3em]{$K$}{$x$}  & 0     & 100   & 250 \\
		\hline
		0     & 0.4   & 0.5   & 0.7 \\
		1000  & 1.4   & 1.8   & 2.7 \\
		5000  & 3.9   & 4.6   & 5.6 \\
		\hline
	\end{tabular}%
	\caption{\label{Table: T8FvsD8F} Percentage difference $100\times\frac{T/\infty/F - D/\infty/F}{D/\infty/F}$ for different initial inventory $x$ and setup cost $K$ values. Base case parameters are used.}
\end{table}%

\FloatBarrier

Note that determining a switching time at the beginning does not constitute much improvement over no stopping time. However, moving to a stopping time improves the results, even under multiple orders. 

We present our results for all our runs in Table \ref{Table: S1ZvsD1Z}. Note that we only report the expected percent loss figures under the case where we employ stopping time as compared to switching time. The results show that the fixed ordering cost does not significantly affect the outcome as we limit ourselves to a single order. When we analyze the settings where we attain the least loss values, we notice that most are the settings where we assume a constant intensity rate over time (\#111). This is expected as we probably resort to stopping decision occasionally when we have almost stationary demand. Similarly, \#24 seems to be the setting where we attain maximum loss for not using stopping time. Setting \#24 is the case where demand is large at the beginning, the horizon is long and finally we gain a positive return (disposal value) when we stop with inventory, all indicating that a more precise selection of the disposal time increases the benefits.

\begin{table}[h!]
	\centering
	\begin{tabular}{|c|ccc|ccc|ccc|}
		\hline
		\diagbox[width=3em]{$K$}{$x$} &     & 0   &          &     & 100  &          &     & 250  &          \\ \hline
		&     & \%  & Set\#    &     & \%   & Set\#    &     & \%   &  Set\#  \\
		& Max & 8.9 & 24  & Max & 10.6 & 24  & Max & 14.8 & 24  \\
		0     & Avg & 4.6 &          & Avg & 5.5  &          & Avg & 8.0  &          \\
		& Min & 2.1 & 111 & Min & 2.4  & 111 & Min & 3.4  & 111 \\ \hline
		& Max & 8.8 & 24  & Max & 10.4 & 24  & Max & 14.4 & 24  \\
		1000  & Avg & 4.5 &          & Avg & 5.4  &          & Avg & 7.8  &          \\
		& Min & 2.0 & 111 & Min & 2.4  & 111 & Min & 3.3  & 111 \\ \hline
		& Max & 8.3 & 24  & Max & 9.7  & 24  & Max & 13.1 & 24  \\
		5000  & Avg & 4.2 &          & Avg & 4.9  &          & Avg & 7.0  &          \\
		& Min & 1.9 & 111 & Min & 2.0  & 89  & Min & 3.0  & 111 \\ \hline
	\end{tabular}
	
	\caption{\label{Table: S1ZvsD1Z} Percentage difference $100\times\frac{S/1/Z - D/1/Z}{D/1/Z}$ for different initial inventory $x$ and setup cost $K$ values. For each $x$ and $K$, we present maximum, average and minimum values over all parameter settings. Set\# shows the setting numbers that attain the maximum or minimum. Setting numbers are presented in Appendix \ref{Section: Number of Parameter Settings}. }
\end{table}

\FloatBarrier

\begin{rem}\label{Remark: Stopping time is important}
	Disposing the available inventory seems to be a critical decision, especially for some remaining time and inventory level combinations (see Subsection \ref{Subsection: Other Useful Structural Results} for supporting analytical results). Moreover, the dynamic selection of this time (via stopping time) as compared to determining at the beginning can be valuable in case the manufacturer has such flexibility. However, we notice that under the case where we allow for multiple orders, the effect of stopping time is reduced though not diminished. 
\end{rem}

\subsubsection{Loss for Not Delaying the First Order}
We first present the comparisons under the base case. Table \ref{Table: T1ZvsT1F} presents the percent loss if we allow for only one order but may delay the time to order under two cases: with a switchover time ($a=S$ as given in the taxonomy) and with a stopping time ($a=D$). The case with stopping time is presented in parentheses for various values of $x$ and $K$. 

\begin{table}[h!]
	\centering
	\begin{tabular}{|c|ccc|}
		\hline
		\diagbox[width=3em]{$K$}{$x$}  & 0     & 100   & 250 \\
		\hline
		0     & 0.0 (0.0)   & 2.3 (2.0)  & 10.1 (7.9) \\
		1000  & 0.0 (0.0)  & 2.3 (2.0)  & 9.9 (7.8) \\
		5000  & 0.0 (0.0)  & 2.2 (1.9)  & 9.1 (7.2) \\
		\hline
	\end{tabular}%
	\caption{\label{Table: T1ZvsT1F} Percentage difference $100\times\frac{T/1/Z - T/1/F}{T/1/F}$ for different initial inventory $x$ and setup cost $K$ values. The numbers in parentheses show the value of $100\times\frac{D/1/Z - D/1/F}{D/1/F}$ under the same parameter setting. Base case parameters are used. }
\end{table}%

\FloatBarrier

As we expect, there is no difference when $x=0$. Also, the results show that the fixed ordering cost does not significantly affect the outcome as we limit ourselves to a single order. As $x$ grows, we observe an increase in the losses. Note that, stopping time is a powerful tool as it partially compensates the mistake in the timing of the first order, and hence the benefits we observe in parentheses are smaller. 

We present our results for all our runs in Table \ref{Table: D1ZvsD1F}. Note that we only report the expected percent loss figures under the case where we also utilize stopping time. The observations made for Table \ref{Table: T1ZvsT1F} are valid here, as well. However, when looking at the percentages, the maximum values here are significant. When we analyze the settings where we attain the maximum loss values, we notice that most are the settings where we assume a constant intensity rate over time (\#125, \#109). This is expected as the use of flexibilities under some settings are more pronounced when the system is almost stationary.

\begin{table}[h!]
	\centering
	\begin{tabular}{|c|ccc|ccc|ccc|}
		\hline
		\diagbox[width=3em]{$K$}{$x$} &     & 0   &        &     & 100  &          &     & 250      &          \\ \hline
		&     & \%  & Set\#  &     & \%   & Set\#    &     & \%       & Set\# \\
		& Max & 0.0 & 1 & Max & 13.5 & 125 & Max & 32.3     & 125 \\
		0     & Avg & 0.0 &        & Avg & 6.2  &          & Avg & 14.8     &          \\
		& Min & 0.0 & 1 & Min & 1.2  & 12  & Min & 1.3      & 121 \\ \hline
		& Max & 0.0 & 1 & Max & 13.4 & 125 & Max & 31.3     & 61       \\
		1000  & Avg & 0.0 &        & Avg & 6.1  &          & Avg & 13.9     &          \\
		& Min & 0.0 & 1 & Min & 1.1  & 12  & Min & 0.0      & 121 \\ \hline
		& Max & 0.0 & 1 & Max & 13.2 & 125 & Max & 26.9 & 109 \\
		5000  & Avg & 0.0 &        & Avg & 5.2  &          & Avg & 10.4 &          \\
		& Min & 0.0 & 1 & Min & 0.0  & 113 & Min & 0        & 49  \\ \hline
	\end{tabular}
	
	\caption{\label{Table: D1ZvsD1F} Percentage difference $100\times\frac{D/1/Z - D/1/F}{D/1/F}$ for different initial inventory $x$ and setup cost $K$ values. For each $x$ and $K$, we present maximum, average and minimum values over all parameter settings. Set\# shows the setting numbers that attain the maximum or minimum. Setting numbers are presented in Appendix \ref{Section: Number of Parameter Settings}.  }
\end{table}

\begin{rem}\label{Remark: Delaying order is important.}
	In case the manufacturer is given the opportunity to order at any time, the cutoff initial inventory level which prevents ordering at time zero can be quite low. Therefore, the prevalent assumption that a final order is to be placed at time zero can be a strong assumption, possibly leading to significant losses. 
\end{rem}

\FloatBarrier

\subsubsection{Value of Combining the Features}

When we combine all the effects, the overall results indicate promising savings. Here we only present the comparisons under the base case. 

Table \ref{Table: D1ZvsD8F_2} presents the cases where we implement the optimal stopping strategy and record the percent loss if we do not use the flexibility of delaying the first order as well as multiple order opportunities. As noted before, the optimal stopping time can compensate the advantages when using other flexibilities. Nevertheless, we have significant losses if we do not implement other flexibilities even in the case where we have small $K$ and small $x$ values. Of course, the advantages reduce (or disappear for $x=0$) with larger $K$ values.  

\begin{table}[h!]
	\centering
	\begin{tabular}{|c|ccc|}
		\hline
		\diagbox[width=3em]{$K$}{$x$} & 0     & 100   & 250 \\
		\hline
		0     & 12.1  & 15.1  & 21.1 \\
		1000  & 2.8   & 4.4   & 8.9 \\
		5000  & 0.0   & 1.9   & 7.2 \\
		\hline
	\end{tabular}%
	\caption{\label{Table: D1ZvsD8F_2} Percentage difference $100\times\frac{D/1/Z - D/\infty/F}{D/\infty/F}$ for different initial inventory $x$ and setup cost $K$ values. Base case parameters are used.}
\end{table}

\FloatBarrier

Tables \ref{Table: T1ZvsD8F} and \ref{Table: T1ZvsD1F}, on the other hand, give us another interpretation. If we do not use our full flexibility scheme compared to the standard final order approach with no stopping, then our losses can be as much as 32\% when $x=250$ and $K=0$ in Table \ref{Table: T1ZvsD8F}. Almost half of the loss comes from not using the flexibility of ordering at any time even if we are going to order at most once; 17.7\% when $x=250$ and $K=0$ in Table \ref{Table: T1ZvsD1F}. Similar deductions can be made when comparing other cases. Of course, as $x$ gets smaller, we see the effect of delaying the order diminishing.

\begin{table}[h!]
	\centering
	\begin{tabular}{|c|ccc|}
		\hline
		\diagbox[width=3em]{$K$}{$x$}  & 0     & 100   & 250 \\
		\hline
		0     & 17.6  & 22.0  & 32.0 \\
		1000  & 7.7   & 10.6  & 18.4 \\
		5000  & 4.5   & 7.4   & 15.5 \\
		\hline
	\end{tabular}%
	\caption{\label{Table: T1ZvsD8F} Percentage difference $100\times\frac{T/1/Z - D/\infty/F}{D/\infty/F}$ for different initial inventory $x$ and setup cost $K$ values. Base case parameters are used.}
\end{table}%

\begin{table}[h!]
	\centering
	\begin{tabular}{|c|ccc|}
		\hline
		\diagbox[width=3em]{$K$}{$x$}  & 0     & 100   & 250 \\
		\hline
		0     & 4.9   & 8.1   & 17.7 \\
		1000  & 4.8   & 7.9   & 17.2 \\
		5000  & 4.5   & 7.4   & 15.5 \\
		\hline
	\end{tabular}%
	\caption{\label{Table: T1ZvsD1F} Percentage difference $100\times\frac{T/1/Z-D/1/F}{D/1/F}$ for different initial inventory $x$ and setup cost $K$ values. Base case parameters are used.}
\end{table}%

\FloatBarrier

\begin{rem}\label{Remark: Considering joint effects is important}
	Considering the joint effect of stopping time, order frequency and delaying the first order, we conclude that the fixed cost of ordering and the inventory level at time zero play an important role. If the initial inventory is large (in our numerical experiments, we take the largest $x$ value to be the half of the expected total horizon demand), then the management is advised to search for the feasibility of implementing a stopping time, as well as delaying the first order. However, if initial inventory level is small, then it is important to consider the possibility of implementing a stopping time and the multiple order option concurrently.
\end{rem} 

\subsection{Sensitivity Analysis and Managerial Insights} \label{Subsection: Sensitivitiy Analysis}

In this subsection, we analyze the performance of our proposed model in detail.  We consider pairwise comparisons of expected discounted total cost given different levels of a parameter utilized in the model.  We summarize these sensitivities in seven subsections: effect of demand structure, effect of outside source, effect of time horizon, effect of penalty cost, effect of time discount and effect of scrapping cost, and finally effect of incorrectly specifying the demand structure.

We only consider limited number settings to demonstrate the sensitivity.  We report all these sensitivities for different $K$ and $x$ values. We end each subsection with a managerial insight summarizing and generalizing the findings. The key approach while generating these insights is not necessarily related to the current decision framework only, but to support further decision-making needed to handle the complete end-of-life management problem as well. 

\subsubsection{Effect of Demand Structure} 
Table \ref{Table: V_convex vs. V_concave,T=50} shows the effect of demand structure on the model $D/\infty/F$ by showing the percent difference when intensity function $\lambda$ is concave and convex. If both the initial inventory and the setup cost are low ($x=0, K=0$), then the cost under convex demand is higher. The reason is that more demand is satisfied earlier when the intensity is convex. Therefore, the costs are discounted less. On the other hand, if the initial inventory is low yet the setup cost is high ($x=0, K=5000$), the cost under concave demand is higher, since more setup might be needed throughout the horizon under concave demand, as the decline rate of demand is lower. If we start with a very large initial inventory ($x=400$) implying that we may not need much ordering, then holding cost component dominates and hence we have a much higher cost for the concave case as inventory is depleted much slower.

Table \ref{Table: V_convex vs. V_concave, T=100} shows the results when $T=100$. The trend is similar to what is said for $T=50$. However, as the horizon is longer and total expected demand is the same for both time horizons, the expected drop in on-hand inventory for the convex case relative to concave is less, and hence percent differences for large $x$ values are not as large as the case $T=50$.

\noindent\textbf{Insight 1:} If we have either high initial inventory level $x$ or high setup cost $K$, it might be wise to encourage (even give incentives to) customers to come earlier -- hence make the demand rate look like convex compared to the original one. On the other hand, with small $x$ and $K$ combinations (northwest part of Table \ref{Table: V_convex vs. V_concave,T=50}), we may look for strategies making customers come later.

\begin{table}[h!]
	\centering
	\begin{tabular}{|c|ccccc|}
		\hline
		\diagbox[width=3em]{$K$}{$x$}  & 0      & 100    & 250    & 300   & 400    \\ \hline
		0    & -5.1 & -5.5 & -0.5 & 4.3 & 27.7 \\
		1000 & -1.5 & -2.9 & -0.3 & 3.1 & 21.0 \\
		5000 & 4.5  & 0.8  & 0.0  & 1.7 & 5.7  \\ \hline
	\end{tabular}
	\caption{\label{Table: V_convex vs. V_concave,T=50}  Percentage difference $100\times (V_{Concave} - V_{Convex}$) / $V_{Convex}$: Comparison of $\widetilde{V}(0,x)+A$ when demand is convex and concave (see Table \ref{Table: Intensity Functions} for definitions). The relevant parameters are $T=50, c_2 = 2\bar{c}, \gamma=0.01, c_4=c/4, \delta=0.005$. } 
\end{table}

\begin{table}[h!]
	\centering
	\begin{tabular}{|c|ccccc|}
		\hline
		\diagbox[width=3em]{$K$}{$x$} & 0       & 100     & 250     & 300     & 400    \\ \hline
		0    & -11.8 & -12.8 & -3.1  & 5.7   & 39.6 \\
		1000 & -11.1 & -14.4 & -11.2 & -5.6  & 28.1 \\
		5000 & -11.3 & -17.6 & -22.4 & -15.8 & 27.0 \\ \hline
	\end{tabular}
	\caption{\label{Table: V_convex vs. V_concave, T=100} Percentage difference  $100\times (V_{Concave} - V_{Convex}$) / $V_{Convex}$: Comparison of $\widetilde{V}(0,x)+A$ when demand is convex and concave (see Table \ref{Table: Intensity Functions} for definitions). The relevant parameters are $T=100, c_2 = 2\bar{c}, \gamma=0.01, c_4=c/4, \delta=0.005$. } 
\end{table}

\FloatBarrier

\subsubsection{Effect of Outside Source / Alternative Policy} The percentages in Table \ref{Table: V_{G=10^-6} - V_{G=0.01}) / V_{G=0.01}, T=100} show the effect of an outside/alternative source by presenting $\widetilde{V}(0,x)+A$ with a decreasing cost of this source $(\gamma=0.01)$ versus and a nearly constant cost ($\gamma=10^{-6}$) over time (time discount is fixed to $\delta=0.005$). The benefit of a decreasing unit cost of the outside source is observed, as the case where the cost is constant over the horizon yield higher total expected cost for every $x$ and $K$.

When $x=0, K=0$, the manufacturer may not use the alternative policy at all since the cost of procurement can be sufficiently low. When $x=0$ and $K=5000$ however, the manufacturer would prefer placing a large order at time zero, and then using the alternative policy if needed. Hence, as $K$ increases, the value of having a decreasing unit cost in the alternative policy also increases.

When $x$ is in the region (350, 450) for any $K$, it is likely that the manufacturer utilizes initial inventory and then switches to alternative policy, instead of placing an order. Hence, given the cost structure of the alternative policy, one can observe the highest percent values in the expected total cost differential in this region of $x$.

When $x=550$ or higher, the manufacturer may not use the alternative policy at all until the stopping time, since the initial inventory seems sufficiently high to cover the demand. Note that, for $x=550$, it is likely that the optimal stopping time is realized closer to $T$. Therefore, any change in the unit cost of alternative policy over time has practically no impact on the expected total cost. Of course, as $x$ goes higher (which might not be very reasonable for the problem structure), we see that the optimal solution may prefer stopping before the end of the horizon (almost at the same time for all $K$ values) and starting to use the alternative source for the remaining part of the horizon. The fact that percentages are higher simply reflect the unit cost difference in the alternative policy in the cases compared. 

\noindent \textbf{Insight 2:} It is shown in other parts of the study that the existence of an alternative policy (or an outside source) can be essential for flexibility needed in the environment. Hence, the cost of this alternative becomes critical in the effectiveness of the approach. Thus, larger percentages in Table \ref{Table: V_{G=10^-6} - V_{G=0.01}) / V_{G=0.01}, T=100} demonstrate the fact that it may be better to support the development of an alternative source so that it will become more cost-efficient (cheaper) over time. This might be realized by giving incentives to other parties for developing technologies to lower the manufacturing price. 

\begin{table}[h!]
	\centering
	\begin{tabular}{|c|ccccccc|}
		\hline
  \diagbox[width=3em]{$K$}{$x$} & 0  & 100  & 250   & 350   & 450  & 550 & 700  \\ \hline
		0                   & 1 & 1  & 1   & 2  & 3  & 0 & 5  \\
		1000                & 4 & 5  & 7   & 11 & 15 & 0 & 5  \\
		5000                & 8 & 11 & 12 & 26 & 30  & 0 & 5  \\ \hline
	\end{tabular}
	\caption{\label{Table: V_{G=10^-6} - V_{G=0.01}) / V_{G=0.01}, T=100} Percentage difference $100\times (V_{\gamma=10^{-6}} - V_{\gamma=0.01}) / V_{\gamma=0.01}$: Comparison of $\widetilde{V}(0,x)+A$ when $\gamma=10^{-6}$ and $\gamma=0.01$ to show the effect of alternative policy. The relevant parameters are $T=100, c_2 = 2\bar{c}, c_4=\bar{c}/4, \delta=0.005$, convex intensity. } 
\end{table}

\subsubsection{Effect of Time Horizon} Table \ref{Table: (V_{T=50} - V_{T=100}) / V_{T=100}} shows the effect of time horizon $T$ by presenting $\widetilde{V}(0,x)+A$ when $T=50$ and $T=100$. When $x=0, K=0$, the expected total cost under $T=100$ is lower, since the manufacturer places small orders later in time, utilizing time-discount. On the other hand, when $x=0, K=5000$, a sufficiently large order is placed at time zero. Since this purchasing cost occurs at time zero in both cases, the costs are similar.

For a relatively small range for $x$ (300-330) (for instance, when $x=331$) and large $K$, the manufacturer does not place an order and uses the alternative policy. If $T=50$, then this policy is used earlier at a time when the unit cost of the alternative policy is relatively higher and discount has less effect. This results in a 13\% difference in relative total expected costs.  On the other hand, for $x=331, K=0$, the manufacturer can place small orders instead of using the alternative policy or facing penalty. This explains a very small percentage difference observed. When $x$ is larger than the range given above, for smaller $K$ values, we start to observe the negative effects of longer horizon, as longer horizon brings more carrying cost over time and hence greater expected costs for the case with $T=100$.

\noindent\textbf{Insight 3:} The selection of time horizon which is equivalent to setting a warranty period is not considered in the current work. On the other hand, extending the warranty period will always be preferable by customers. Hence if one observes benefits of extending, it might be potentially a beneficial managerial move. If there are moderate to high setup cost $K$ values, and relatively low initial inventory $x$ values (south middle east part of Table \ref{Table: (V_{T=50} - V_{T=100}) / V_{T=100}}) it might be wiser to extend the horizon to $T = 100$ under the knowledge that demand will be flatter through 100 periods.

\begin{table}[h!]
	\centering
	\begin{tabular}{|c|cccccc|}
		\hline
	  \diagbox[width=3em]{$K$}{$x$} & 0   & 100 & 250 & 331  & 400      & 435   \\ \hline
   	     0                   & 7 & 8 & 7 & 1  & -12   & -24 \\
	  	 1000                & 3 & 5 & 6 & 3  & -6    & -14 \\
 		 5000                & 1 & 4  & 6 & 13 & 15    & -1  \\ \hline
	\end{tabular}
	\caption{\label{Table: (V_{T=50} - V_{T=100}) / V_{T=100}} Percentage difference $100\times (V_{T=50} - V_{T=100}) / V_{T=100}$: Comparison of $\widetilde{V}(0,x)+A$ when $T=50$ and $T=100$ to show the effect of time horizon. The relevant parameters are $c_2 = 2\bar{c}, c_4=\bar{c}/4,\gamma = 0.01, \delta=0.005$, convex intensity. } 
\end{table}

\FloatBarrier

\subsubsection{Effect of Penalty Cost} Table \ref{Table: V_{p=1000} - V_{p=200}) / V_{p=200}, G=0.01} and Table \ref{Table: V_{p=1000} - V_{p=200}) / V_{p=200}, G=10^(-6)} show the effect of penalty $\bar{c}_2$ by presenting $\widetilde{V}(0,x)+A$ when $\bar{c}_2=2\bar{c}$ and $\bar{c}_2=10\bar{c}$. Note that for $x$ values which are in between 0 and the expected total demand (for instance $x=300$), the change in unit penalty cost is expected to have its highest impact, since the firm takes the risk of penalty for not placing an order. Nevertheless, even if we change penalty cost by a factor of 5, the increase in the optimal value of the expected total discounted cost is negligible (less than 2\% in all cases). The reason is that the manufacturer can stop holding inventory and use the alternative policy to avoid penalty cost.

\noindent\textbf{Insight 4:} As we have an existing alternative (which is much cheaper than the larger penalty cost in Table \ref{Table: Our numerical Values}), practically there is no significant difference observed after changing the penalty cost. Hence, with the existence of such an alternative, the firm might announce to pay large penalties for not satisfying demand to attract more demand to begin with. This shows the importance of creating such an alternative. Additionally, if the cost of alternative decreases over time (periods where the risk of paying the penalty is more), then it will be even better for decreasing expected costs.

\begin{table}[h!]
	\centering
	\begin{tabular}{|c|cccc|}
		\hline
	 \diagbox[width=3em]{$K$}{$x$}	& 0     & 100   & 250   & 300   \\ \hline
		0    & 0.4 & 0.5 & 0.7 & 0.8 \\
		1000 & 0.5 & 0.7 & 0.9 & 1.1 \\
		5000 & 0.6 & 0.8 & 1.0 & 1.8 \\ \hline
	\end{tabular}
	\caption{\label{Table: V_{p=1000} - V_{p=200}) / V_{p=200}, G=0.01} Percentage difference $100\times (V_{\bar{c}_2=10\bar{c}} - V_{\bar{c}_2=2\bar{c}}) / V_{\bar{c}_2=2\bar{c}}$: Comparison of $\widetilde{V}(0,x)+A$ when $\bar{c}_2=2\bar{c}$ and $\bar{c}_2=10\bar{c}$. The relevant parameters are $T= 100, c_4=\bar{c}/4,\gamma = 0.01, \delta=0.005$, convex intensity. } 
\end{table}

\begin{table}[h!]
	\centering
	\begin{tabular}{|c|cccc|}
		\hline
		& 0     & 100   & 250   & 600   \\ \hline
		0    & 0.4 & 0.5 & 0.6 & 0.0 \\
		1000 & 0.4 & 0.5 & 0.7 & 0.0 \\
		5000 & 0.3 & 0.6 & 0.7 & 0.0 \\ \hline
	\end{tabular}
	\caption{\label{Table: V_{p=1000} - V_{p=200}) / V_{p=200}, G=10^(-6)} Percentage difference $100\times (V_{\bar{c}_2=10\bar{c}} - V_{\bar{c}_2=2\bar{c}}) / V_{\bar{c}_2=2\bar{c}}$: Comparison of $\widetilde{V}(0,x)+A$ when $\bar{c}_2=2\bar{c}$ and $\bar{c}_2=10\bar{c}$. The relevant parameters are $T= 100, c_4=\bar{c}/4,\gamma = 10^{-6}, \delta=0.005$, convex intensity. } 
\end{table}

\subsubsection{Effect of Time Discount} Table \ref{Table: V_{delta=10^{-6}} - V_{delta=0.005}) / V_{delta=0.005}} shows the effect of time discount $\delta$ by presenting $\widetilde{V}(0,x)+A$ when $\delta=0.005$ and $\delta=10^{-6}$. The expected total cost is always higher when time discount is close to zero, as can be predicted. The effect of discount decreases in $K$, for smaller $x$ values, since a large order is placed at time zero rather than later on. However, for intermediate $x$ values (350-450) the effect is reversed or disappears, since it is likely that an order is needed later in the horizon; hence, the value of $K$ becomes critical.

\noindent\textbf{Insight 5:} Time discount shows the sensitivity of results on the total discounted expected cost for varying horizon lengths. In the numerical experiments, the time discount value seems to be effective for different $x$ values rather than $K$; and hence reiterating the importance of initial inventory. This length will be different for industries, and hence the essential insight will be a function of the industry considered. 

\begin{table}[h!]
	\centering
	\begin{tabular}{|c|ccccc|}
		\hline
		\diagbox[width=3em]{$K$}{$x$} & 0    & 100  & 250  & 350  & 450  \\ \hline
		0    & 11 & 13 & 18 & 20 & 17 \\
		1000 & 8  & 11 & 15 & 19 & 18 \\
		5000 & 6  & 8  & 14 & 21 & 18 \\ \hline
	\end{tabular}
	 \caption{ \label{Table: V_{delta=10^{-6}} - V_{delta=0.005}) / V_{delta=0.005}} Percentage difference $100\times (V_{\delta=10^{-6}} - V_{\delta=0.005}) / V_{\delta=0.005}$: Comparison of $\widetilde{V}(0,x)+A$ when $\delta=0.005$ and $\delta=10^{-6}$. The relevant parameters are $T= 100, c_2 = 2\bar{c}, c_4=\bar{c}/4,\gamma = 0.01$, convex intensity.  } 
\end{table}

\subsubsection{Effect of Scrapping Cost}

Table \ref{Table: V_{scr=c/4} - V_{scr=-c/4}) / V_{scr=-c/4}} shows the impact of scrapping cost $c_4$ by presenting $\widetilde{V}(0,x)+A$ when $c_4=\bar{c}/4$ and $c_4=-\bar{c}/4$. If $x<450$, then it is most likely that the inventory is used to satisfy the demand; therefore, we may not need to scrap inventory. In this region of $x$, we stop holding inventory only if the inventory level is about to hit zero and the risk of penalty arises. In such a case, only a negligible amount of inventory is scrapped, implying that the scrapping cost has a negligible effect on the expected total cost. On the other hand, if $x>450$, then the excess inventory may need to be scrapped. Therefore, the scrapping cost can have an impact on the expected total cost.

\begin{table}[h!]
	\centering
	\begin{tabular}{|c|cccccc|}
		\hline
		\diagbox[width=3em]{$K$}{$x$} & 0     & 100   & 250   & 450   & 500   & 550    \\ \hline
		0    & 0.0 & 0.0 & 0.0 & 0.1 & 2.8 & 15.1 \\
		1000 & 0.1 & 0.1 & 0.1 & 0.2 & 2.8 & 15.1 \\
		5000 & 0.1 & 0.1 & 0.1 & 0.2 & 2.8 & 15.1 \\ \hline
	\end{tabular}
	\caption{\label{Table: V_{scr=c/4} - V_{scr=-c/4}) / V_{scr=-c/4}} Percentage difference $100\times (V_{c_4=\bar{c}/4} - V_{c_4=-\bar{c}/4}) / V_{c_4=-\bar{c}/4}$: Comparison of $\widetilde{V}(0,x)+A$ when $c_4=\bar{c}/4$ and $c_4=-\bar{c}/4$. The relevant parameters are $T= 100, c_2 = 2\bar{c},\gamma = 0.01$, $\delta = 0.005$, convex intensity.} 
\end{table}

\noindent\textbf{Insight 6:} Scrapping inventory is not a significant burden (or source of income) when the manufacturer wishes to start using outside source rather than holding inventory, unless the initial inventory level is excessively large. So, this parameter seems to be less effective for decision-making purposes.

\subsubsection{Expected Penalty of a Misspecified Intensity Function}

We analyze the impact of an error in selecting the intensity function of the non-homogeneous Poisson process $(N_t)_{t\in[0,T]}$. Suppose that the manufacturer chooses a linear intensity function, but the true intensity function is convex (recall Table \ref{Table: Intensity Functions} for definitions). To calculate the cost of making such assumption, we first solve $\widetilde{V}(0,x)+A$ when the intensity function is convex and linear to find the best decision variables (ordering, stopping, continuation regions and order-up-to levels). Next, with those fixed decision variables, we compute the objective function $\widetilde{V}(0,x)+A$ when intensity is convex.

Table \ref{Table: V_linear_decvar vs. V_convex_decvar, T=50} shows the percent difference in expected total cost. If $x=0,K=0$ and intensity is linearly decreasing, then the manufacturer places small orders more frequently. On the other hand, if the intensity is decreasing in a convex manner, then the manufacturer tends to place larger orders at the beginning and smaller orders towards the end. Hence, by presuming a linearly decreasing intensity function and taking actions based on this assumption, the manufacturer can observe excess penalty cost at the beginning and excess holding cost towards the end, resulting in a significant loss (above 30\%). We believe that this is a motivation to study a problem where intensity rate is random itself. Also note that for other $x$ and $K$ combinations, the loss can be as high as 116\%. 

The largest percent loss can be observed when there is an initial inventory level which is close to the expected demand throughout the horizon. For instance when $x=450$, it is likely that we wait for an amount of time and then place an order. If intensity is linearly decreasing, then this future order can be large. Hence, if the manufacturer places a large order in the future, then it is most likely that excess holding and procurement costs incur as the arrival rate under decreasing-convex case is much lower towards the end of horizon.

When $x$ is large (above expected demand), one may be less willing to stop towards the end under the presumption that the intensity rate is decreasing linearly. Hence, this difference in the stopping region create small, but meaningful percent difference in expected total costs indicating the importance of the stopping time. 

\noindent\hypertarget{Penalty of Misspecification}{\textbf{Insight 7:}} One of the critical inputs to the end-of-life management problem is the  estimation of the demand rate over the time horizon. The results presented in Table \ref{Table: V_linear_decvar vs. V_convex_decvar, T=50} indicate that the penalty of this misspecification can be drastic, especially if the fixed ordering cost is large. Hence an initial study to analyze underlying demand structure seems to be a reasonable way for management to use her resources. Similarly, an agreement with the consumers on the possible timing of demand arrivals may further help in quantifying the demand intensity over time,  decreasing the risk of misspecification. 

\begin{table}[h!]
	\captionsetup{font=small}
	\centering
	\begin{tabular}{|c|cccccc|}
		\hline
		& 0    & 100  & 250  & 450   & 550 & 650 \\ \hline
		0    & 36 & 22 & 6  & 10  & 0 & 3 \\
		1000 & 11 & 14 & 16 & 85  & 1 & 3 \\
		5000 & 29 & 5  & 36 & 116 & 0 & 3 \\ \hline
	\end{tabular}
	\caption{ \label{Table: V_linear_decvar vs. V_convex_decvar, T=50}  Percentage difference $100\times(V_{linear-decvar} - V_{convex-decvar}$) / $V_{convex-decvar}$: The cost $V_{linear-decvar}$ denotes the expected total cost of $\widetilde{V}(0,x)+A$ calculated with convex intensity, but the best decision variables are found with linear intensity. $V_{Convex-decvar}$ denotes the expected total cost of $\widetilde{V}(0,x)+A$ with convex intensity. The relevant parameters are $T=50, c_2 = 2\bar{c}, \gamma=0.01, c_4=c/4, \delta=0.005$.  } 
\end{table}
\section{Conclusions with Practical Implications and Possible Extensions} \label{Section: Conclusion}

This study analyzes the value of providing flexibility in the end-of-life management problem. Namely, we allow for multiple orders as well as a change in the timing of the first order, and we utilize stopping times to decide on when to dispose all the available inventory. To that end, we consider a manufacturer whose problem is to make one of the three decisions at each period: (1) place an order for spare parts, (2) do nothing and use existing inventory to satisfy demand, or (3) stop holding inventory permanently and use an outside/alternative source. We cast this problem as an optimal stopping problem with additional decisions so that it can be solved by means of stochastic dynamic programming. After providing the dynamic programming formulation, we use martingale theory to facilitate the calculation of the value function. We devise a taxonomy for benchmark models to show the value of our approach as well as compare our results with the current literature. Several analytical results are presented to further enhance our understanding of the problem. Finally, we present computational results, generating several managerial insights. 

The originality of the study comes from the fact that several dimensions of the end-of-life problem are considered. The first dimension is related to the decision-making environment. We consider possible decisions which give us the benefit of using all flexibilities. Remarks \mbox{\ref{Remark: Allowing multiple orders is important}} through \mbox{\ref{Remark: Considering joint effects is important}} summarize when and how to exploit these flexibilities. Accordingly, approaches with the premise that a final order must be placed at time zero can be a strong assumption leading to losses. Moreover, the dynamic selection of the time to stop (via a stopping time) and additionally allowing for multiple orders can be valuable.

A second dimension is related to the practicalities during the implementation phase of these policies. Note that the problem of managing end-of-life is more than supplying spare parts inventory, especially if the horizon is long. Hence, operationally, additional information we obtain during the horizon can be used more effectively to manage the decisions within the horizon. More precisely, one can compute several properties regarding the stopping time, as depicted in Subsection \mbox{\ref{Subsection: Other Useful Structural Results}}. The information gathered would allow proactive agreements with the outside source, as well as some operational support in the later phases of the end-of-life. The manufacturer can share the information on the time of  transition with the outside source. The information can be based on some assumptions (such as no further order will be given, as in Subsections \mbox{\ref{Subsubsection: Bound on the Switching Time}} and \mbox{\ref{Subsubsection: S is increasing in tau}}) or can be in the form of exact distribution (Subsection \mbox{\ref{Subsubsection: Distribution of Stopping Time}}). Although not presented here, using the analysis carried out in Appendix \ref{Sec: Computation of Distribution of stopping time} (distribution of stopping times), one can come up with various additional information to further support decision-making in all, but especially the later stages of the horizon. Two straightforward examples are as follows: (1) One can compute the probability that an order is not placed in the next $n$ periods. For instance, the manufacturer can use this probability for the following decision: if the probability is greater than a threshold, then it might be beneficial not to review inventory for some number of periods, avoiding review costs. Note that the review cost component affects the selection of the time between two consecutive periods. (2) For a fixed initial inventory level, the expected number of periods before placing the first order can be calculated. This information will likely be utilized by the manufacturing function for planning the production of this lot in advance. 

A final dimension we consider is regarding the setting of the parameters, which is likely to affect the problem outcome significantly. In Subsection \ref{Subsection: Sensitivitiy Analysis}, seven managerial insights are proposed. The insights are mainly towards controlling the environment of the end-of-life management problem. They are related to affecting the customer arrival rates, making monetary arrangements to support development of the alternative source, extending the warranty period, and announcing favorable parameter values to attract more customers so that demand rates increase, making the expected profits of the end-of-life period even more attractive for the manufacturing firm. 

There are a few straightforward extensions that can follow: use of time-varying unit procurement cost, use of time-varying fixed ordering cost, use of unequal review periods, and use of costs to review inventory. Except the last one, the current dynamic programming formulation can be adjusted. For the case with review costs, state space-reducing properties can be studied. Finally, observing Insight \hyperlink{Penalty of Misspecification}{7}, which is related to the significant cost of misspecification of the demand intensity function, the case with random intensity might be a reasonable future direction for research.

\appendix
\section{Auxiliary Results}

This subsection provides auxiliary results for the other proofs. Recall that $(\Omega,\cH,\PP)$ denotes the underlying probability space which hosts the non-homogeneous Poisson process $N$. Let $\FF=(\cF_t)_{t\in[0,T]}$ denote the filtration generated by $N$, that is, $\cF_t:=\sigma(N_s, s\in[0,t])$ for every $t\in[0,T]$. Lemma \ref{Lemma: Martingale of NHPP} below introduces the martingale property for $N$ and it helps us convert Poisson integrals into Lebesgue integrals. 
\begin{lem} \label{Lemma: Martingale of NHPP}
	\citep[p.~299, VI.6.4]{ccinlar2011probability} Let $(H_t)_{t\in[0,T]}$ be a non-negative $\FF$-predictable process such that $\E\sqb{\int_{0}^{t} H_u\lambda(u)du} < \infty$ for every $t\in[0,T]$. 
	Then, the process $(L_t)_{t\in[0,T]}$ defined by
	\begin{equation*}
	L_t := \int_{0}^{t} H_u\,dN_u - \int_{0}^{t} H_u\lambda(u)\,du, \quad t\in[0,T],
	\end{equation*}
	is a martingale with respect to $\FF$. Moreover, for each $\FF$-stopping time $\tau\in\cT$,
	\begin{equation*}
	\E\sqb{\int_{0}^{\tau} H_u\,dN_u} = \E\sqb{\int_{0}^{\tau}H_u\lambda(u)\,du}.
	\end{equation*}
\end{lem}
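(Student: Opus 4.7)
\textbf{Proof proposal for Lemma \ref{Lemma: Martingale of NHPP}.}
The plan is to first establish the compensated martingale property for the raw process $N$ itself, then extend it to stochastic integrals against $H$ via a monotone class / approximation argument, and finally deduce the stopping-time identity from the optional stopping theorem.

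First, I would show that $M_t := N_t - \Lambda(t)$ is an $\FF$-martingale. Since $N$ is a non-homogeneous Poisson process, the increment $N_t - N_s$ is independent of $\cF_s$ and is Poisson distributed with mean $\Lambda(t) - \Lambda(s)$ for $0 \leq s \leq t \leq T$. Hence $\E[N_t - N_s \mid \cF_s] = \Lambda(t) - \Lambda(s)$, which yields $\E[M_t \mid \cF_s] = M_s$. Integrability is immediate since $\E[N_t] = \Lambda(t) < \infty$.

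Next, I would verify the martingale property of $L$ first for simple $\FF$-predictable integrands of the form $H_u = \sum_{i=1}^{n} \xi_i \mathbf{1}_{(t_{i-1}, t_i]}(u)$ with bounded $\cF_{t_{i-1}}$-measurable $\xi_i$. For such $H$, $L_t$ is a finite linear combination of terms $\xi_i [M_{t_i \wedge t} - M_{t_{i-1} \wedge t}]$, and using the tower property together with the martingale property of $M$ one obtains directly that $\E[L_t \mid \cF_s] = L_s$. Then I would pass to general non-negative $\FF$-predictable $H$ satisfying the stated integrability condition by approximating $H$ pointwise from below by bounded simple predictable processes $H^{(n)} \uparrow H$ (using a monotone class argument on the predictable $\sigma$-algebra). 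Monotone convergence applied to both $\int_0^t H_u \lambda(u)\,du$ and $\int_0^t H_u \,dN_u$ gives the desired convergence of $L^{(n)}_t$ to $L_t$, and the integrability hypothesis guarantees that the limiting martingale property is preserved under conditional expectation.

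For the second claim, I would invoke the optional stopping theorem. Since every $\tau \in \cT$ takes values in the finite set $\TT = \{0, 1, \ldots, T\}$, it is a bounded $\FF$-stopping time. Applying optional stopping to $L$ yields $\E[L_\tau] = \E[L_0] = 0$, which is precisely the claimed identity
\[
\E\!\left[\int_0^\tau H_u \, dN_u\right] = \E\!\left[\int_0^\tau H_u \lambda(u)\,du\right].
\]

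The main obstacle is the passage from simple predictable integrands to general ones: one must ensure that the approximating sequence $H^{(n)}$ can be chosen so that both Poisson and Lebesgue integrals converge, and that uniform integrability (or at least the $L^1$ control provided by the hypothesis $\E[\int_0^t H_u \lambda(u) du] < \infty$) is sufficient to preserve the martingale identity in the limit. This is a standard but non-trivial step in the construction of stochastic integrals with respect to counting process martingales, and is the substance of Çınlar's reference; here I would simply invoke the monotone class theorem on the predictable $\sigma$-algebra to carry it out cleanly.
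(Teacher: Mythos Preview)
The paper does not actually prove this lemma: it is stated in Appendix~A as an auxiliary result and attributed directly to \c{C}{\i}nlar (2011, p.~299, VI.6.4), with no argument given. So there is no ``paper's proof'' to compare against; the authors simply import the result.

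Your outline is the standard route and is correct. A couple of minor remarks. First, when you pass to the limit from simple predictable $H^{(n)}\uparrow H$, note that $L_t$ is a \emph{difference} of two increasing limits, so monotone convergence should be applied to $\int_0^t H_u\,dN_u$ and $\int_0^t H_u\lambda(u)\,du$ separately; the hypothesis $\E[\int_0^t H_u\lambda(u)\,du]<\infty$ together with $\E[L_t^{(n)}]=0$ gives $\E[\int_0^t H_u\,dN_u]<\infty$, so both pieces are in $L^1$ and conditional monotone convergence on each piece yields the martingale property in the limit. Second, although the lemma as written restricts to $\tau\in\cT$ (discrete-valued, bounded by $T$), the paper later applies it with stopping times such as $(k+1)\wedge\sigma_x^k$, which are merely bounded $\FF$-stopping times; your optional-stopping argument covers this more general case as well, since $L$ is a right-continuous martingale on the compact interval $[0,T]$ and any $\tau\le T$ is bounded.
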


The next lemma is helpful while converting the expected cost terms into new forms that can be calculated numerically.
\begin{lem} \label{Lemma: E[x-N(tau)^+] = sum P(N(tau) <= k)}
	For every $x \in \ZZ_+$ and $k\in\TT$,
	\[
	\mathbb{E} [ (x-N_k)^+] = \sum_{n=0}^{x-1} \PP\cb{N_k \leq n},
	\]
	where the sum is defined to be 0 when $x=0$. 
\end{lem}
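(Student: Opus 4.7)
The plan is to establish the identity pointwise and then take expectations, using linearity. Specifically, I would prove the deterministic identity
\[
(x - N_k(\omega))^+ = \sum_{n=0}^{x-1} \mathbf{1}_{\{N_k(\omega) \leq n\}}
\]
for every $\omega \in \Omega$, and then apply $\mathbb{E}[\cdot]$ to both sides.

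To verify the pointwise identity, I would fix $\omega$ and let $j = N_k(\omega) \in \ZZ_+$. There are two cases. If $j \geq x$, then the left-hand side equals $0$, and on the right-hand side each indicator $\mathbf{1}_{\{j \leq n\}}$ vanishes for $n \in \{0,\dots,x-1\}$ because $n \leq x-1 < x \leq j$; so the sum is also $0$. If $j < x$, then the left-hand side equals $x - j$, and on the right-hand side the indicator $\mathbf{1}_{\{j \leq n\}}$ equals $1$ precisely for $n \in \{j, j+1, \dots, x-1\}$, giving a sum of $x - j$ terms. This handles the edge case $x = 0$ as well, since then the sum is empty and $(0 - N_k)^+ = 0$.

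Taking expectations and using linearity together with $\mathbb{E}[\mathbf{1}_{\{N_k \leq n\}}] = \PP\{N_k \leq n\}$ then yields
\[
\mathbb{E}[(x - N_k)^+] = \sum_{n=0}^{x-1} \PP\{N_k \leq n\},
\]
as claimed. I do not anticipate any obstacles here: this is the standard tail-sum representation of the expectation of a non-negative integer-valued random variable (applied to $Y = (x - N_k)^+$, which is bounded by $x$), and the only thing to be careful about is the convention that an empty sum is $0$ when $x = 0$, which matches $(0 - N_k)^+ = 0$ directly.
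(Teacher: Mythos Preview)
Your proof is correct. The paper's proof is essentially the same idea (the tail-sum formula for a bounded nonnegative integer-valued random variable) but presents it as a telescoping at the level of expectations: it observes that $\mathbb{E}[(x-N_k)^+]-\mathbb{E}[(x-1-N_k)^+]=\PP\{N_k\leq x-1\}$ and iterates down to $x=0$. Your pointwise indicator identity followed by linearity of expectation is an equally standard and arguably more direct rendering of the same argument.
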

\begin{proof}
	Note that $\mathbb{E}[(x-N_k)^+] - \mathbb{E}[(x-1-N_k)^+]=\PP\cb{N_k \leq x-1}$. Iterating this equality yields that $\mathbb{E}[(x-N_k)^+] = \mathbb{E}[(x-1-N_k)^+] + \PP\cb{N_k \leq x-1} = \mathbb{E}[(x-2-N_k)^+] + \PP\cb{N_k \leq x-2} + \PP\cb{N_k \leq x-1} = \dots = \sum_{n=0}^{x-1} \PP\cb{N_k \leq n}.$
\end{proof}
Recall that $\sigma^{k}_x=\inf\{u>k: N_u-N_k\geq x\}$ and $\sigma_x = \sigma^0_x$. The following lemma shows that the stopping time $\sigma_x$ is conditionally independent from the past given the present. This enables us to write dynamic programming algorithms when the objective function includes $\sigma_x$. For an $\FF$-stopping time $\tau$, we define the stopped $\sigma$-algebra $\cF_\tau:= \cb{E\in\cH : E\cap \cb{\tau\leq t}\in\cF_t \text{ for each } t\in[0,T]}$.
 
\begin{lem} \label{Lemma: sigma_x has two parts}
	For every $\FF$-stopping time $\tau\in\cT$ and $x \in \ZZ_+$, on $\{\tau < \sigma_x\}$,
	\begin{equation*}
	\sigma_x = \sigma_{x-N_\tau}^{\tau} + \tau,
	\end{equation*}
	where $\sigma_{x-N_\tau}^{\tau} = \inf \{t>0: N_{t+\tau}-N_\tau \geq x-N_\tau \}$. Moreover, $\sigma_{x-N_\tau}^{\tau}$ and $\cF_\tau$ are conditionally independent given $N_\tau$.
\end{lem}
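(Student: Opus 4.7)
The plan is to establish the two assertions of the lemma separately, dealing first with the pathwise identity $\sigma_x = \sigma_{x-N_\tau}^{\tau} + \tau$ on $\{\tau < \sigma_x\}$ and then with the conditional independence claim.

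For the identity, I would first observe that on the event $\{\tau < \sigma_x\}$ we have $N_\tau < x$: since paths of $N$ are non-decreasing and $\sigma_x = \inf\cb{u > 0 : N_u \geq x}$, having $\tau < \sigma_x$ forces $N_\tau \leq x-1$. Next, because $\sigma_x > \tau$ on this event, I can restrict the defining infimum to $u > \tau$ and substitute $u = t + \tau$, which turns the condition $N_u \geq x$ into $N_{t+\tau} - N_\tau \geq x - N_\tau$; this gives
\[
\sigma_x - \tau = \inf\cb{t > 0 : N_{t+\tau} - N_\tau \geq x - N_\tau} = \sigma_{x-N_\tau}^{\tau}.
\]

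For the conditional independence claim, I plan to exploit the fact that $\tau \in \cT$ takes values in the finite set $\TT$, and decompose the sample space into the events $\{\tau = k\}$, $k \in \TT$. On each such event, $\sigma_{x-N_\tau}^{\tau}$ coincides with $\sigma_{x-N_k}^{k}$, which is measurable with respect to $N_k$ and the post-$k$ increment process $(N_{t+k} - N_k)_{t \geq 0}$. Because $N$ is a non-homogeneous Poisson process, it has independent increments, so the post-$k$ increment process is independent of $\cF_k$. Once we condition on $N_k$, the threshold $x - N_k$ is determined, and $\sigma_{x-N_k}^{k}$ reduces to a measurable function of data that is independent of $\cF_k$; this yields conditional independence on $\{\tau = k\}$, and patching across $k \in \TT$ gives the full claim.

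The main conceptual subtlety, and what I expect to be the hardest part to phrase carefully, is that the distribution of $\sigma_{x-N_\tau}^{\tau}$ also depends on $\tau$ through the shifted intensity $\lambda(\cdot + \tau)$; the clean reading of the statement is thus conditional independence given the pair $(\tau, N_\tau)$, and since $\tau$ is $\cF_\tau$-measurable this is automatically respected by the case analysis above. The discreteness of $\TT$ is precisely what lets us avoid invoking a continuous-time strong Markov property. A minor care-point is the degenerate region $\{N_\tau \geq x\}$, where the threshold $x-N_\tau$ is nonpositive so that $\sigma_{x-N_\tau}^{\tau} = 0$; the decomposition identity is not asserted there and therefore is not affected.
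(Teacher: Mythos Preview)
Your argument for the pathwise identity matches the paper's exactly: note $N_\tau < x$ on $\{\tau<\sigma_x\}$, restrict the defining infimum to times past $\tau$, and shift. For the conditional-independence part, however, you take a genuinely different route. The paper does not exploit the discreteness of $\TT$ at all; it simply invokes the strong Markov property of non-homogeneous Poisson processes (citing \c{C}{\i}nlar, \emph{Probability and Stochastics}, VI.5.18) to conclude that $N_{u+\tau}$ and $\cF_\tau$ are conditionally independent given $N_\tau$ for every $u>0$, and then observes that $\sigma_{x-N_\tau}^{\tau}$ is by definition a measurable function of the post-$\tau$ process. Your case-decomposition over $\{\tau=k\}$, $k\in\TT$, is more elementary and self-contained: it reduces everything to the independent-increments property with no external reference, at the price of needing $\tau$ to be $\TT$-valued. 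It also makes explicit the subtlety you flag about the shifted intensity---that one is effectively conditioning on the pair $(\tau,N_\tau)$ rather than on $N_\tau$ alone---whereas the paper absorbs this issue into the cited result. For this paper's setting, where $\cT$ consists of $\TT$-valued stopping times, your argument is arguably cleaner; the paper's one-line citation is shorter and would in principle cover continuous-time $\tau$ as well.
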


\begin{proof}
	On the set $\{\tau < \sigma_x\}$, we see that $N_\tau < x$. Then, on $\{\tau < \sigma_x \}$, we have
	\begin{align*}
	\sigma_x 
	= & \inf \{t>0:N_t \geq x \} 
	= \inf \{t > \tau: N_t \geq x \} \\
	= & \inf \{u>0: N_{u+\tau}-N_\tau+N_\tau \geq x \} + \tau 
	= \sigma_{x-N_\tau}^{\tau} + \tau.
	\end{align*}
	For the second claim, we know from the strong Markov property of non-homogeneous Poisson processes \citep[p.\@ 296, VI.5.18]{ccinlar2011probability} that $N_{u+\tau}$ and $\cF_\tau$ are conditionally independent given $N_\tau$ for any $u>0$. Hence, by definition, $\sigma^\tau_{x-N_\tau}$ and $\cF_\tau$ are conditionally independent given $N_\tau$.
\end{proof}

\section{Proofs of the Results in Section \ref{Section: Problem Definition}} \label{Section: Proofs of Section Problem Defn}

\begin{proof}[Proof of \autoref{Lemma: Holding Cost}] \label{Proof: Lemma: Holding cost}
	Clearly, $H(k,0)=0$. Let $x\geq 1$. Then,
	\begin{align*}
	H(k,x) = & c_1\E\sqb{\int_{k}^{k+1} e^{-\delta(u-k)} \big( x-(N_u-N_k)\big)^+\,du } \\
	= & c_1\int_{k}^{k+1} e^{-\delta(u-k)} \E\sqb{\big( x-(N_u-N_k)\big)^+}\,du \quad\text{(Fubini's Theorem)}\\
	= & c_1\int_{k}^{k+1} e^{-\delta(u-k)} \sum_{n=0}^{x-1}\PP\cb{N_u-N_k\leq n} \,du \quad\text{(Lemma \ref{Lemma: E[x-N(tau)^+] = sum P(N(tau) <= k)})} \\
	= & c_1\sum_{n=0}^{x-1} \int_{k}^{k+1} e^{-\delta(u-k)} \PP\cb{N_u-N_k\leq n} \,du \\
	= & c_1\sum_{n=0}^{x-1} \sum_{i=0}^{n} \int_{k}^{k+1} e^{-\delta(u-k)} \PP\cb{N_u-N_k=i} \,du \\
	= & c_1\sum_{n=0}^{x-1} \sum_{i=0}^{n} \int_{k}^{k+1} e^{-\delta(u-k)} e^{-(\Lambda(u)-\Lambda(k))} \frac{(\Lambda(u)-\Lambda(k))^i}{i!} \,du,
	\end{align*}
	where $\Lambda(u)-\Lambda(k)=\int_{k}^{u}\lambda(s)ds$.
\end{proof}

\begin{proof}[Proof of \autoref{Lemma: Lost sales cost}]
We first note that $L(k,x)$ can be expressed as
\begin{equation*}
L(k,x) = \EE\sqb{\int_{k}^{k+1} e^{-\delta(u-k)} c_2(u)dN_u} - \EE\sqb{\int_k^{(k+1)\wedge\sigma_x} e^{-\delta(u-k)} c_2(u)dN_u }.
\end{equation*}
It follows from Lemma \ref{Lemma: Martingale of NHPP} that
\[
\E\sqb{\int_{k}^{k+1} e^{-\delta(u-k)} c_2(u) \,dN_u} 
=  \int_{k}^{k+1} e^{-\delta (u-k)} c_2(u) \lambda(u)\,du.
\]
Moreover, we have
\begin{align*}
& \E\sqb{\int_{k}^{(k+1) \wedge \sigma^k_x} e^{-\delta(u-k)} c_2(u) \,dN_u} \\
= & \E\sqb{\int_{k}^{(k+1) \wedge \sigma^k_x} e^{-\delta(u-k)} c_2(u)\lambda(u)\,du} \quad\text{(Lemma \ref{Lemma: Martingale of NHPP})} \\
= & \int_{k}^{k+1} \E\sqb{ 1_{\cb{u<\sigma^k_x}} } e^{-\delta(u-k)} c_2(u)\lambda(u)\,du  \quad\text{(Fubini's Theorem)}\\
= & \int_{k}^{k+1} \PP\cb{N_u-N_k\leq x} e^{-\delta(u-k)} c_2(u)\lambda(u)\,du \quad\text{ (Definition of $\sigma^k_x$)} \\
= & \sum_{i=0}^{x} \int_{k}^{k+1} e^{-\delta(u-k)} c_2(u)\lambda(u) e^{-(\Lambda(u)-\Lambda(k))}\frac{(\Lambda(u)-\Lambda(k))^i}{i!} \,du.
\end{align*}
Hence, the result of the lemma follows. 
\end{proof}

\section{Proof of the Results in Subsection \ref{Subsection: Reformulation}} \label{Section: Proofs of Subsection Reformulation}

\begin{proof}[Proof of \autoref{Prop: Reformulate Obj. Fnc.}]
	Note that for every $t\in\TT$ and every $x_t\in\ZZ_+$, we can write
	\begin{align*}
	e^{-\delta t} S(t,x_t) = & e^{-\delta t} c_{4}x_t + e^{-\delta t} \E\sqb{\int_{t}^{T} e^{-\delta(u-t)} c_{3}(u) dN_u } \\
	= & e^{-\delta t} c_{4}x_t + \E\sqb{\int_{t}^{T} e^{-\delta u} c_{3}(u) dN_u } \\
	= & e^{-\delta t} c_{4}x_t + \E\sqb{\int_{0}^{T} e^{-\delta u} c_{3}(u) dN_u } - \E\sqb{\int_{0}^{t} e^{-\delta u} c_{3}(u) dN_u } \\
	= & e^{-\delta t} c_{4}x_t + \E\sqb{\int_{0}^{T} e^{-\delta u} c_{3}(u) dN_u } - \sum_{k=0}^{t-1} e^{-\delta k} \EE\sqb{\int_k^{k+1} e^{-\delta (u-k)} c_{3}(u) dN_u} \\
	= & e^{-\delta t} c_{4}x_t + A - \sum_{k=0}^{t-1} e^{-\delta k} \EE\sqb{\int_k^{k+1} e^{-\delta (u-k)} c_{3}(u) dN_u}.
	\end{align*}
	Moreover, for every $t\in\TT$ and every $x_0,x_1,\dots,x_{t-1}\in\ZZ_+$, we can write
	\begin{align*}
	\sum_{k=0}^{t-1} e^{-\delta k} L(k,x_k) = & \sum_{k=0}^{t-1} e^{-\delta k} \EE \sqb{\int_{ (k+1)\wedge \sigma^{k}_{x_k}}^{k+1} e^{-\delta (u-k)} c_2(u) dN_u}\\
	= & \sum_{k=0}^{t-1} e^{-\delta k} \of{ \EE \sqb{\int_{k}^{k+1} e^{-\delta (u-k)} c_2(u) dN_u} - \EE \sqb{\int_{k}^{(k+1)\wedge \sigma^{k}_{x_k} } e^{-\delta (u-k)} c_2(u) dN_u}}.
	\end{align*}
	Then, for every $t\in\TT$ and every $x_0,x_1,\dots,x_t\in\ZZ_+$, we have
	\begin{align*}
	& \sum_{k=0}^{t-1} e^{-\delta k} C(k,x_k) + e^{-\delta t} S(t,x_t) \\
	= & \sum_{k=0}^{t-1} e^{-\delta k} \of{H(k,x_k) + L(k,x_k)} + e^{-\delta t} S(t,x_t) \\
	= & \sum_{k=0}^{t-1} e^{-\delta k} c_1\EE\sqb{\int_k^{k+1} e^{-\delta (u-k)} (x_k- (N_u-N_k))^+ du} \\
	& + \sum_{k=0}^{t-1} e^{-\delta k} \of{ \EE \sqb{\int_{k}^{k+1} e^{-\delta (u-k)} c_2(u) dN_u} - \EE \sqb{\int_{k}^{(k+1)\wedge \sigma^{k}_{x_k} } e^{-\delta (u-k)} c_2(u) dN_u}} \\
	& + e^{-\delta t} c_{4}x_t + A - \sum_{k=0}^{t-1} e^{-\delta k} \EE\sqb{\int_k^{k+1} e^{-\delta (u-k)} c_{3}(u) dN_u} \\
	= & \sum_{k=0}^{t-1} e^{-\delta k} c_1\EE\sqb{\int_k^{k+1} e^{-\delta (u-k)} (x_k- (N_u-N_k))^+ du} \\
	& + \sum_{k=0}^{t-1} e^{-\delta k} \of{ \EE \sqb{\int_{k}^{k+1} e^{-\delta (u-k)} [c_2(u)-c_3(u)] dN_u} - \EE \sqb{\int_{k}^{(k+1)\wedge \sigma^{k}_{x_k} } e^{-\delta (u-k)} c_2(u) dN_u}}\\
	& + e^{-\delta t} c_{4}x_t + A\\
	= & \sum_{k=0}^{t-1} e^{-\delta k} \widetilde{C}(k,x_k) + e^{-\delta t} \widetilde{S}(x_t) + A.
	\end{align*}
	Therefore, for every $\pi\in\Pi$, we have
	\begin{align*}
	& \left. \E\sqb{\sum_{k=0}^{\tau-1} e^{-\delta k} \bigg( c(\mu_k(X_k))+ C(k,X_k+\mu_k(X_k))\bigg) + e^{-\delta\tau} S(\tau,X_\tau) \right\vert X_0 = x}  \\
	= & \left. \E\sqb{\sum_{k=0}^{\tau-1} e^{-\delta k} \bigg( c(\mu_k(X_k))+ \widetilde{C}(k,X_k+\mu_k(X_k))\bigg) + e^{-\delta\tau} \widetilde{S}(X_\tau) \right\vert X_0 = x}  + A.
	\end{align*} 
	After taking infimum over all $\pi\in\Pi$, we conclude the proof.
\end{proof}

\begin{proof}[Proof of \autoref{Proposition: Converting tilde(C)}]
	Let $k\in\TT$ and $x\in\ZZ_+$. If $x=0$, then $\sigma^k_x = k$ and $\of{x-(N_u-N_k)}^+ = 0$ for every $u>k$. Therefore, we apply Lemma \ref{Lemma: Martingale of NHPP} to the definition of $\widetilde{C}(k,x)$ in \eqref{Equation: tilde(C)(k,x)} to get
	\begin{align*}
	\widetilde{C}(k,0) & = \EE\sqb{\int_k^{k+1} e^{-\delta(u-k)} \sqb{c_2(u)-c_3(u)} dN_u}  = \int_k^{k+1} e^{-\delta(u-k)} \sqb{c_2(u)-c_3(u)} \lambda(u)\,du.
	\end{align*}
	Next, if $x\geq 1$, then from Lemma \ref{Lemma: Holding Cost}, we have
	\[
	\EE\sqb{\int_k^{k+1} e^{-\delta(u-k)} \of{ x-(N_u-N_k)}^+ du} = \sum_{n=0}^{x-1} \sum_{i=0}^{n} \int_{k}^{k+1} e^{-\delta(u-k)} e^{-(\Lambda(u)-\Lambda(k))} \frac{(\Lambda(u)-\Lambda(k))^i}{i!} \,du.
	\]
	Moreover, from Lemma \ref{Lemma: Martingale of NHPP}, we have 
	\[
	\EE\sqb{ \int_{k}^{k+1} e^{-\delta(u-k)} \sqb{c_2(u) -c_3(u)} \, dN_u} = \int_{k}^{k+1} e^{-\delta (u-k)} [c_2(u)-c_{3}(u)] \lambda(u)\,du.
	\]
	Finally, by proceeding as in the proof of Lemma \ref{Lemma: Lost sales cost}, it is possible to show that
	\[ 
	\EE\sqb{\int_k^{(k+1)\wedge \sigma_x^k} e^{-\delta(u-k)} c_2(u) \, dN_u} = \sum_{i=0}^{x} \int_{k}^{k+1} e^{-\delta(u-k)} c_2(u) \lambda(u) e^{-\Lambda(u)-\Lambda(k)}\frac{(\Lambda(u)-\Lambda(k))^i}{i!} \,du.
	\]
	Combining the above terms concludes the proof.
\end{proof}

\section{Proof of the Results in Subsection \ref{Subsection: Structural Results for D8F}} \label{Section: Proofs of Subsection Structural Results for D8F}

\begin{proof}[Proof of \autoref{Proposition: Solution to D8F}] \textcolor{white}{asd}\\
	\textbf{1.} To show the partitioning of $\TT\times\ZZ_+$ into three regions, let an arbitrary time $k\in\TT$ and an inventory level $x\in\ZZ_+$ be given (recall that $\TT=\cb{0,1,\dots,T}$). \\
	\underline{Case 1:} Assume that
	$
	\widetilde{S}(x)\leq\tJ(k,x)=\inf_{m\in\ZZ_+}(c(m)+\tG(k,x+m)).
	$ 
	Then $x\in R^S_k$ by definition. It also holds that $\tS(x)\leq \tG(k,x)$ because $c(0)=0$. Then, $x\notin R^C_k$. Moreover, it holds that $\tS(x)\leq \inf_{m\geq 1}(c(k)+\tG(k,x+m))$, so $x\notin R^O_k$. \\
	\underline{Case 2:} Assume that $\widetilde{S}(x) > \tJ(k,x) = \inf_{m\in\ZZ_+}(c(m)+\tG(k,x+m))$ and that $\widetilde{G}(k,x) \leq \inf_{m\geq 1}(c(m)+\tG(k,x+m))$. Then, from \eqref{Equation: tilde(V)(k,x_k) DP Definition}, it is possible to see that $\tV(k,x)=\tG(k,x)$. Moreover, $\tG(k,x) < \tS(x)$ due to the assumption of this case. Hence, $x\in R^C_k$.  Next, $x\notin R^S_k$ since $\tS(x)> \tG(k,x)$.  Finally, $\tG(k,x)\leq \inf_{m\geq 1}(c(m)+\tG(k,x+m))$ and therefore $x\notin R^O_k$.  \\
	\underline{Case 3:} Assume that $\widetilde{S}(x) > \tJ(k,x) = \inf_{m\in\ZZ_+}(c(m)+\tG(k,x+m))$ and that $\tG(k,x) > \inf_{m\geq 1}(c(m)+\tG(k,x+m))$. Then, $\inf_{m\geq 1}(c(m)+\tG(k,x+m)) < \min \{\tS(x),\tG(k,x)\}$ and so $x\in R^O_k$. Moreover, from \eqref{Equation: tilde(V)(k,x_k) DP Definition}, it is possible to see that $\tV(k,x)<\tG(k,x)$ so $x\notin R^C_k$. Finally, we have $x\notin R^S_k$ since $\widetilde{S}(x) > \inf_{m\in\ZZ_+}(c(m)+\tG(k,x+m))$ from the assumption of the case.\\
	\noindent\textbf{2.} Let $\tpi^*$ be given as in the proposition. For ease of notation, define for each $k\in\TT$ and $x\in\ZZ_+$ the function $\tJ_1(k,x) := \inf_{m\geq 1}(c(m)+\tG(k,x+m))$. If $X^{\tpi^*}_k\in R^S_k$, then $\tS(X_k^{\tpi^*})\leq \tJ(k,X_k^{\tpi^*})$. Then, we stop due to the definition of $\widetilde{\tau}^*$ in Corollary \ref{Cor: DP solves tilde(V)*}. If $X_k^{\tpi^*}\in R^O_k$, then $\tJ_1(k,X_k^{\tpi^*}) < \min\{ \tS(X_k^{\tpi^*}), \tG(k,X_k^{\tpi^*})\}.$ Then, from \eqref{Equation: tilde(V)(k,x_k) DP Definition}, we have $\tV(k,X_k^{\tpi^*})=\tJ_1(k,X_k^{\tpi^*}),$ so it is optimal to place an order of $\widetilde{\mu}^*_k\circ X_k^{\tpi^*} \geq 1$ where the function $\widetilde{\mu}^*_k$ is defined as in Corollary \ref{Cor: DP solves tilde(V)*}. Hence, it is optimal to increase inventory to the level $S^*_k\circ X_k^{\tpi^*}$ where $S^*_k(x):=x+\widetilde{\mu}^*_k(x)$ for each $x\in R^O_k$. Finally, if $X_k^{\tpi^*}\in R^C_k$, then $\tV(k,X_k^{\tpi^*}) =\tG(k, X_k^{\tpi^*})$, so the decision of not placing an order ($\widetilde{\mu}^*_k\circ X_k^{\tpi}=0$ ) can attain the infimum in the definition of $\tJ$ in \eqref{Equation: tilde(J)(k,x)}. Moreover, $X_k^{\tpi^*}\notin R^O_k$ and $X_k^{\tpi^*}\notin R^S_k$ since $R^C_k, R^O_k$ and $R^S_k$ are disjoint from the first claim of the proposition. Then, it is optimal to continue without taking an action. 
\end{proof}

\section{Distribution of Optimal Stopping Time}\label{Sec: Computation of Distribution of stopping time}
In this section, we provide recursive relations to compute the distribution of optimal stopping time. Recall that $\pi^*$, $\mu^*_k$ $\tau^*$ and $X^{\pi^*}_k$ respectively denote an optimal policy, optimal order amount, optimal stopping time, and inventory level. Let us ease the notation and use $\pi,\mu_k,\tau$ and $X_k$ in this section. Let $x_0$ denote the initial inventory level at time $0$. Then, we stop at time $m$ if the first time that we enter the stopping region is $m$. Formally,
\begin{align}\label{Eq: tau=t}
\PP\cb{\tau= m} = \PP\cb{\condition{ X_1 \notin R^S_1, \dots,\, X_{m-1}\notin R^S_{m-1},\, X_m \in R^S_m } X_0=x_0 }.
\end{align}
where $R^S_k$ denotes the set of inventory levels that we stop at time $k$. In this section, we provide a recursive relation to compute \eqref{Eq: tau=t}. Define
\begin{equation}\label{Eq: P(t,x_t)}
P(t,x_t) := \PP\cb{\condition{  X_{t+1} \notin R^S_{t+1},\dots,\, X_{m-1}\notin R^S_{m-1},\, X_m \in R^S_m } X_t = x_t }.
\end{equation}
Then, $\PP\cb{\tau=m} = P(0,x_0)$ and we calculate $P(0,x_0)$. To that end, let us first state the following lemma.
\begin{lem} \label{Lem: P(a,b,c|d) = P(a,b|d)+P(a,c|d)}
	Let $A,B,C,D$ be non-negligible events such that $B\cap C=\emptyset$. Then,
	\begin{equation*}
	\PP(A\cap (B\cup C) | D) = \PP(A\cap B|D) + \PP(A\cap C|D).  
	\end{equation*}
\end{lem}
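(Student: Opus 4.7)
The plan is to establish the identity through two elementary steps: first expressing the intersection $A \cap (B \cup C)$ as a disjoint union, and then applying finite additivity of the probability measure (both before and after conditioning on $D$).

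First I would invoke the distributive law for sets to write
\[
A \cap (B \cup C) \;=\; (A \cap B) \cup (A \cap C).
\]
Next, I would verify that the two sets on the right-hand side are disjoint: since $(A \cap B) \cap (A \cap C) \subseteq B \cap C$ and $B \cap C = \emptyset$ by hypothesis, the intersection $(A\cap B)\cap (A\cap C)$ is empty. Consequently, finite additivity of $\PP$ yields
\[
\PP\bigl(A \cap (B \cup C)\bigr) \;=\; \PP(A \cap B) + \PP(A \cap C),
\]
and the same identity holds for the restricted measure $E \mapsto \PP(E \cap D)$, namely
\[
\PP\bigl(A \cap (B \cup C) \cap D\bigr) \;=\; \PP(A \cap B \cap D) + \PP(A \cap C \cap D).
\]

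Finally, since $D$ is non-negligible, I would divide both sides of the last equality by $\PP(D)$ and apply the definition of conditional probability $\PP(E \mid D) = \PP(E \cap D)/\PP(D)$ to each of the three terms. This immediately yields the claimed identity
\[
\PP\bigl(A \cap (B \cup C) \mid D\bigr) \;=\; \PP(A \cap B \mid D) + \PP(A \cap C \mid D).
\]

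There is no genuine obstacle here: the lemma is a direct consequence of set-theoretic distributivity, disjoint-additivity of $\PP$, and the definition of conditional probability. The only point worth flagging explicitly is the disjointness check, which relies solely on the hypothesis $B \cap C = \emptyset$ and is what makes finite additivity applicable.
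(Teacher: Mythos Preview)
Your proof is correct and follows essentially the same route as the paper: both arguments note that $B\cap C=\emptyset$ forces $(A\cap B\cap D)$ and $(A\cap C\cap D)$ to be disjoint, apply finite additivity to $\PP(A\cap(B\cup C)\cap D)$, and then divide by $\PP(D)$ using the definition of conditional probability. The paper's write-up is slightly terser (it skips the unconditional version and works directly with the events intersected with $D$), but the underlying idea is identical.
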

\begin{proof}
	$B\cap C=\emptyset$ implies $(A\cap B\cap D)\cap (A\cap C \cap D)=\emptyset$. Then,
	\begin{align*}
	\PP(A\cap (B\cup C) | D ) 
	= & \frac{1}{\PP(D)} \PP(A\cap (B\cup C)\cap D) \\
	= & \frac{1}{\PP(D)} \sqb{ \PP(A\cap B\cap D) + \PP(A\cap C\cap D)  } \\
	= & \PP(A\cap B|D) + \PP(A\cap C|D). 
	\end{align*}
\end{proof}

The following proposition enables us to calculate $P(0,x_0)$. 
\begin{prop}
	The function $P$ can be expressed recursively by
	\begin{equation*}
	P(t-1,x) = \sum_{\substack{n\in\ZZ_+: \\ x-n\in R^C_{t}}} P(t, x-n) \PP\cb{N_t - N_{t-1} =n} + \sum_{\substack{ n\in\ZZ_+ :\\ x-n \in R^O_t}} P(t, y_t) \PP\cb{N_t - N_{t-1}=n}
	\end{equation*}
	with the terminal condition $P(m,x_m)=1$ if $x_m\in R^S_m$ and $P(m,x_m)=0$ if $x_m\notin R^S_m$. Here, $y_t$ denotes the order-up-to level at time $t$. 
\end{prop}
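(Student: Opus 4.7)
The proof plan is to apply the law of total probability, conditioning on the Poisson increment $N_t - N_{t-1}$, and then invoke the independent-increments property of $N$ to decouple the conditioning past from the future. First, I would write
\begin{equation*}
P(t-1,x) = \sum_{n=0}^\infty \PP\of{\condition{X_t\notin R^S_t,\dots,X_m\in R^S_m} X_{t-1}=x,\,N_t-N_{t-1}=n}\,\PP\cb{N_t-N_{t-1}=n},
\end{equation*}
using that $N_t-N_{t-1}$ is independent of $\cF_{t-1}$ and hence of the event $\{X_{t-1}=x\}$.

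Under the convention implicit in this recursion---namely, that $x$ represents the inventory at time $t-1$ prior to the demand in $[t-1,t]$, any ordering at $t-1$ having already been absorbed at the previous recursion step---the identity $X_t = x-n$ holds on $\{n\le x\}$. I would then partition the sum over $n$ according to which of the disjoint regions $R^S_t$, $R^C_t$, $R^O_t$ (Proposition \ref{Proposition: Solution to D8F}) contains $x-n$. The indices with $x-n\in R^S_t$ violate the requirement $X_t\notin R^S_t$ and hence contribute zero. For $x-n\in R^C_t$, no order is placed at $t$, so $(X_{t+1},\dots,X_m)$ is a measurable function of $x-n$ together with the subsequent increments $(N_{t+j}-N_{t+j-1})_{j\ge 1}$; since these are independent of $\cF_t$, the conditional probability collapses to $P(t,x-n)$. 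For $x-n\in R^O_t$, the optimal policy orders up to $y_t$, and the analogous argument applied to the post-ordering state produces $P(t,y_t)$.

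Summing the two surviving contributions against $\PP\cb{N_t-N_{t-1}=n}$, and appealing to Lemma \ref{Lem: P(a,b,c|d) = P(a,b|d)+P(a,c|d)} to combine the disjoint events legitimately, yields the claimed recursion. The terminal condition at $t=m$ follows immediately from the definition of $P(t,x_t)$: the chain of conditions collapses to $\{X_m\in R^S_m\}$, giving $P(m,x_m)=\mathbf{1}_{\{x_m\in R^S_m\}}$.

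The main obstacle I foresee is the identification of the ordering case with $P(t,y_t)$. Because the definition of $P(t,\cdot)$ does not itself stipulate what action is taken at time $t$, I must argue that, conditional on the post-ordering inventory at time $t$ equaling $y_t$, the joint law of $(X_{t+1},\dots,X_m)$ coincides with that obtained when the policy is started afresh at $t$ with inventory $y_t$. This relies on $y_t$ being an order-up-to target---no further ordering fires in the same period---together with the independent-increments property of the non-homogeneous Poisson process (cf.\ Lemma \ref{Lemma: sigma_x has two parts}), which renders the future increments independent of $\cF_t$.
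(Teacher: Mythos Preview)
Your approach is essentially the same as the paper's: condition on the Poisson increment $N_t-N_{t-1}$, partition according to which of the disjoint regions $R^S_t,R^C_t,R^O_t$ contains the post-demand inventory, and use the independent-increments/Markov property to reduce each surviving case to $P(t,\cdot)$. The paper carries this out by first writing down the state transition (post-ordering inventory at $t$ is either $x-n$ or $y_t$), deducing the equivalence $\{X_t\notin R^S_t\}\iff\{x-N^{t-1}_t\notin R^S_t\}$ from $y_t\notin R^S_t$, and then invoking Lemma~\ref{Lem: P(a,b,c|d) = P(a,b|d)+P(a,c|d)} exactly as you suggest; the rest is a direct conditioning computation that matches your sketch.

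One small point of care: your sentence ``the identity $X_t=x-n$ holds on $\{n\le x\}$'' is not quite right as stated, since on $\{x-n\in R^O_t\}$ the optimal policy resets $X_t$ to $y_t$ rather than $x-n$. The paper handles this by taking $X_t$ to denote the post-action inventory at $t$ and then observing that $\{X_t\notin R^S_t\}$ nonetheless agrees with $\{x-n\notin R^S_t\}$ because $y_t\notin R^S_t$. Your subsequent case analysis is consistent with this reading, so the slip is purely expository and does not affect the argument.
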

\begin{proof}
	Let $N_{t}^{t-1}:= N_t - N_{t-1}$ be the total demand during one period. The following equation relates the inventory levels between $t-1$ and $t$:
	\begin{equation*}
	X_t = \begin{cases}
	X_{t-1} - N^{t-1}_t, & \text{ if } X_{t-1} - N^{t-1}_t \in R^C_{t}\cup R^S_t, \\
	y_t, & \text{ if } X_{t-1} - N^{t-1}_t \in R^O_{t}.
	\end{cases}
	\end{equation*}
	That is, the inventory at $t$ is equal to inventory at $t-1$ minus the demand, if we stay in the continuation or stopping region; and the inventory at $t$ is equal to the order-up-to level if we enter the ordering region. Then, since $y_t\notin R^S_t$, the following relations hold:
	\begin{align}
	X_{t-1} - (N_t - N_{t-1}) \notin R^S_t \iff &  X_t \notin R^S_t, \label{Eq: dont enter R^S} \\ 
	X_{t-1} - N^{t-1}_t \in R^C_t \implies & X_t\in R^C_t, \label{Eq: enter R^C} \\
	X_{t-1} - N^{t-1}_t \in R^O_t \implies & X_t = y_t. \label{Eq: enter R^O}
	\end{align}
	Next, let us combine the events that occur after $t$ since they will be fixed throughout the proof:
	\begin{equation*}
	E := \cb{X_{t+1}\notin R^S_{t+1}, \dots,\, X_{m-1}\notin R^S_{m-1},\, X_m \in R^S_m}.
	\end{equation*}
	Then, 
	\begin{align*}
	P(t-1,x) = & \PP\cb{\condition{  X_{t} \notin R^S_{t},\dots,\, X_{m-1}\notin R^S_{m-1},\, X_m \in R^S_m } X_{t-1} = x } \\
	= & \PP \of{\condition{ E \cap \cb{X_t \notin R^S_t} } X_{t-1}=x } \\
	= & \PP\of{\condition{ E \cap \cb{ X_{t-1} - N^{t-1}_t \notin R^S_t } } X_{t-1}=x  } \quad \text{(Relation \eqref{Eq: dont enter R^S})} \\
	= & \PP\of{\condition{ E \cap \cb{ x - N^{t-1}_t \notin R^S_t } } X_{t-1}=x  } \\
	= & \PP\of{\condition{ E\cap \of{\cb{x-N^{t-1}_t \in R^O_t} \cup \cb{x-N^{t-1}_t \in R^C_t} }  } X_{t-1}=x } \quad \text{(Since $R^S_t,R^O_t,R^C_t$ are disjoint)}
	\end{align*}
	Moreover, notice that the sets $\cb{x - N^{t-1}_t \in R^C_t }$ and $\cb{x - N^{t-1}_t \in R^O_t}$ are disjoint since 
	\begin{equation*}
	\cb{x - N^{t-1}_t \in R^C_t } \cap \cb{x - N^{t-1}_t \in R^O_t} = \cb{x - N^{t-1}_t \in R^C_t \cap R^O_t} = \cb{x - N^{t-1}_t \in \emptyset} = \emptyset,
	\end{equation*}
	as $R^C_t$ and $R^O_t$ are disjoint. Therefore, after applying Lemma \ref{Lem: P(a,b,c|d) = P(a,b|d)+P(a,c|d)}, we get
	\begin{align*}
	P(t-1,x) = & \PP\of{\condition{ E \cap \of{\cb{x - N^{t-1}_t \in R^C_t }\cup \cb{x - N^{t-1}_t \in R^O_t} } } X_{t-1}=x } \\
	= & \PP\of{\condition{E\cap \cb{x-N^{t-1}_t\in R^C_t} }X_{t-1}=x } + \PP\of{\condition{ E\cap \cb{x-N^{t-1}_t \in R^O_t} } X_{t-1}=x } \\
	= & \sum_{\substack{n\in\ZZ_+: \\ x-n\in R^C_{t}}} \PP\of{\condition{E\cap \cb{N^{t-1}_t = n} }X_{t-1}=x } + \sum_{\substack{ n\in\ZZ_+ :\\ x-n \in R^O_t}} \PP\of{\condition{E\cap \cb{N^{t-1}_t =n} }X_{t-1}=x}. 
	\end{align*}
	Moreover, the first summand is
	\begin{align*}
	&\PP\of{\condition{E\cap \cb{N^{t-1}_t = n} }X_{t-1}=x } \\
	& = \frac{1}{\PP\{X_{t-1}=x\}} \PP\of{E \cap\cb{N^{t-1}_t=n}\cap \cb{X_{t-1}=x}} \\
	& = \frac{1}{\PP\{X_{t-1}=x\}} \PP\of{E \cap\cb{N^{t-1}_t=n}\cap \cb{X_{t-1}=x} \cap \cb{X_t=x-n}} \quad\text{(By relation \eqref{Eq: enter R^C} and $x-n\in R^C_t$)} \\
	& = \frac{1}{\PP\{X_{t-1}=x\}} \PP\of{E | N^{t-1}_t=n, X_{t-1}=x, X_t = x-n} \PP\cb{N^{t-1}_t=n, X_{t-1}=x, X_t=x-n} \\
	& = \frac{1}{\PP\{X_{t-1}=x\}} \PP\of{E | X_t =x-n} \PP\cb{N^{t-1}_t=n, X_{t-1}=x, X_t=x-n} \quad \text{(Markov property of $X_t$)} \\
	& = \frac{1}{\PP\{X_{t-1}=x\}} \PP\of{E | X_t =x-n} \PP\cb{N^{t-1}_t=n, X_{t-1}=x} \quad \text{(Relation \eqref{Eq: enter R^C})} \\
	& = \frac{1}{\PP\{X_{t-1}=x\}} \PP\of{E | X_t =x-n} \PP\of{N^{t-1}_t=n} \PP\of{X_{t-1}=x} \quad \text{($N^{t-1}_t$ and $X_{t-1}$ are independent)} \\
	& = \PP(E | X_t = x-n) \PP\{N^{t-1}_t =n\} \\
	& = P(t,x-n) \PP\{N^{t-1}_t =n\} \quad \text{(Definition of $P$)}. 
	\end{align*}
	Furthermore, by the applying same steps, we can express the other summand as
	\begin{align*}
	&\PP\of{\condition{E\cap \cb{N^{t-1}_t =n} }X_{t-1}=x} \\
	& = \frac{1}{\PP\{X_{t-1}=x\}} \PP\of{E\cap \cb{N^{t-1}_t = n} \cap \cb{X_{t-1}=x} \cap \cb{X_t=y_t}} \quad \text{(Since $x-n\in R^O_t$ and relation \eqref{Eq: enter R^O})} \\
	& = \PP\of{E | X_t = y_t} \PP\cb{N^{t-1}_t =n} \\
	& = P(t,y_t) \PP\cb{N^{t-1}_t =n}. 
	\end{align*}
	Hence, the following equalities hold: 
	\begin{align*}
	P(t-1,x) = & \sum_{\substack{n\in\ZZ_+: \\ x-n\in R^C_{t}}} \PP\of{\condition{E\cap \cb{N^{t-1}_t = n} }X_{t-1}=x } + \sum_{\substack{ n\in\ZZ_+ :\\ x-n \in R^O_t}} \PP\of{\condition{E\cap \cb{N^{t-1}_t =n} }X_{t-1}=x} \\
	= & \sum_{\substack{n\in\ZZ_+: \\ x-n\in R^C_{t}}} P(t,x-n) \PP\cb{N^{t-1}_t =n} + \sum_{\substack{ n\in\ZZ_+ :\\ x-n \in R^O_t}} P(t,y_t) \PP\cb{N^{t-1}_t =n}.
	\end{align*}
\end{proof}

\section{Proof of Results in Subsubsection \ref{Subsubsection: Bound on the Switching Time}}
\label{Section: Proof of Results in Subsubsection: Inventory at 0 affects switching time}
In this section, we prove Proposition \ref{Proposition: Upper bound on tau^*} and Proposition \ref{Proposition: Lower bound on tau^*}. For the brevity of notation, we take derivative of integrals when the functions are right-continuous. To ensure the existence, it is possible to take right-derivative and obtain the same expressions. We characterize an upper bound $\tau^{ub}$ and a lower bound $\tau^{lb}$ by stating, respectively, that $\frac{\partial \cC(x,\tau)}{\partial \tau}\geq 0$ for every $\tau \geq \tau^{\text{ub}}$ and $\frac{\partial \cC(x,\tau)}{\partial \tau} \leq 0$ for every $\tau\leq \tau^{\text{lb}}$. Lemma \ref{Lemma: tau_ub} shows a condition which makes the first derivative of $\cC(x,\tau)$ positive. 

\begin{lem} \label{Lemma: tau_ub}
	Let $x\in\ZZ_+$ and $\tau\in[0,T]$. If
	\[
	\mathbb{P}(N_\tau \geq x) \tilde{c}_2(\tau) \geq \mathbb{P}(N_\tau \leq x-1) [c_{3}(\tau) + c_{4}  ],
	\]
	then $ \frac{\partial \cC(x,\tau)}{\partial \tau} \geq 0.$
\end{lem}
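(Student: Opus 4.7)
The plan is to compute $\dfrac{\partial \cC(x,\tau)}{\partial \tau}$ term-by-term from the explicit form \eqref{Equation: cC(x,tau) Defn} and to show that, after rearrangement, the hypothesis of the lemma is exactly what is needed to conclude non-negativity. First I would rewrite the four terms of $\cC(x,\tau)$ as ordinary Lebesgue integrals so that Leibniz's rule applies cleanly. The holding-cost term becomes $c_1 \int_0^\tau e^{-\delta u}\EE[(x-N_u)^+]\,du$ by Fubini. For the lost-sales term, observe that on $[\tau\wedge\sigma_x,\tau]$ the integrand vanishes unless $N_{u-}\geq x$; rewriting as $\EE\sqb{\int_0^\tau e^{-\delta u} c_2(u)\mathbf{1}_{\{N_{u-}\geq x\}}\,dN_u}$ and applying Lemma \ref{Lemma: Martingale of NHPP} converts it into $\int_0^\tau e^{-\delta u} c_2(u)\PP\{N_u\geq x\}\lambda(u)\,du$. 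The alternative-source term becomes $\int_\tau^T e^{-\delta u}c_3(u)\lambda(u)\,du$ by the same lemma.

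Differentiating these three contributions with respect to $\tau$ yields, respectively,
\[
c_1 e^{-\delta\tau}\EE[(x-N_\tau)^+],\qquad e^{-\delta\tau}c_2(\tau)\PP\{N_\tau\geq x\}\lambda(\tau),\qquad -e^{-\delta\tau}c_3(\tau)\lambda(\tau).
\]
For the scrap term $c_4 e^{-\delta\tau}\EE[(x-N_\tau)^+]$, the key identity I would use is the Kolmogorov forward relation $\tfrac{d}{d\tau}\PP\{N_\tau\leq n\} = -\lambda(\tau)\PP\{N_\tau = n\}$, which telescopes when combined with Lemma \ref{Lemma: E[x-N(tau)^+] = sum P(N(tau) <= k)} to give $\tfrac{d}{d\tau}\EE[(x-N_\tau)^+] = -\lambda(\tau)\PP\{N_\tau\leq x-1\}$. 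Hence differentiating the scrap term produces $-\delta c_4 e^{-\delta\tau}\EE[(x-N_\tau)^+] - c_4 e^{-\delta\tau}\lambda(\tau)\PP\{N_\tau\leq x-1\}$.

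Summing the four contributions and factoring out $e^{-\delta\tau}$ I expect to obtain
\[
e^{\delta\tau}\frac{\partial\cC(x,\tau)}{\partial\tau} = (c_1 - \delta c_4)\EE[(x-N_\tau)^+] + \lambda(\tau)\bigl[c_2(\tau)\PP\{N_\tau\geq x\} - c_3(\tau) - c_4\PP\{N_\tau\leq x-1\}\bigr].
\]
Splitting $c_3(\tau)=c_3(\tau)\PP\{N_\tau\geq x\}+c_3(\tau)\PP\{N_\tau\leq x-1\}$ via the partition $\{N_\tau\geq x\}\cup\{N_\tau\leq x-1\}=\Omega$ converts the bracket into $\widetilde{c}_2(\tau)\PP\{N_\tau\geq x\}-(c_3(\tau)+c_4)\PP\{N_\tau\leq x-1\}$, whose non-negativity is exactly the hypothesis of the lemma. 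Under assumption POS, $c_1-\delta c_4\geq 0$ and $\EE[(x-N_\tau)^+]\geq 0$, so the first summand is also non-negative, which completes the argument.

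The main technical point to handle with care is justifying the interchange of differentiation with the expectation/integral signs. The integrands are uniformly bounded on $[0,T]$ and $\lambda$, $c_2$, $c_3$ are piecewise smooth; following the convention stated at the beginning of this appendix, I interpret the derivatives as right-derivatives at the finitely many discontinuity points, so dominated convergence validates each of the formal differentiations above.
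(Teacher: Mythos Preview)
Your proof is correct and reaches exactly the same final expression for $\partial\cC(x,\tau)/\partial\tau$ as the paper. The only difference is organizational: the paper first invokes the closed-form representation of $\cC(x,\tau)$ from \cite{frenk2019exact} (reproduced as \eqref{Equation: C(x,tau)}), in which all $\tau$-dependence sits in the upper limits of Lebesgue integrals, so the derivative falls out immediately by Leibniz's rule. You instead differentiate the original definition \eqref{Equation: cC(x,tau) Defn} term by term, using Lemma~\ref{Lemma: Martingale of NHPP} to convert the Poisson integrals and the Kolmogorov forward relation $\tfrac{d}{d\tau}\PP\{N_\tau\le n\}=-\lambda(\tau)\PP\{N_\tau=n\}$ to handle the scrap term. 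Your route is slightly more computational but fully self-contained (no appeal to an external paper for the intermediate reformulation), whereas the paper's route is shorter once that reformulation is granted. Your explicit invocation of POS for the sign of $(c_1-\delta c_4)\EE[(x-N_\tau)^+]$ is also a small clarification that the paper leaves implicit.
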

\begin{proof}
	By proceeding as in \cite{frenk2019exact}, we can express $\cC(x,\tau)$ as
	\begin{align*}
	\cC(x,\tau) 
	= & c_{4}x + \EE \bigg(\int_{0}^{\tau} e^{-\delta u} \lambda(u) [- c_{4} - c_{2}(u)] \PP \cb{N_u \leq x-1} du  \bigg) 
	+ \int_{0}^{\tau} e^{-\delta u} \lambda(u) \tilde{c}_2(u)du \notag \\
	& + (c_{1}-\delta c_{4}) \int_{0}^{\tau} e^{-\delta u} \EE [(x-N_u)^+] du
	+ \int_{0}^{T} e^{-\delta u} c_{3}(u) \lambda(u)du.
	\end{align*}
	Therefore, taking derivative with respect to $\tau$ yields
	\begin{align*}
	& \frac{\partial \cC(x,\tau)}{\partial \tau}\\  
	& = e^{-\delta \tau} \lambda(\tau) [- c_{4} - c_{2}(\tau)] \PP \cb{N_\tau \leq x-1}  + e^{-\delta \tau} \lambda(\tau) \tilde{c}_2(\tau)  + e^{-\delta \tau}(c_{1}-\delta c_{4}) \EE[(x-N_\tau)^+] \\
	& = e^{-\delta \tau} \lambda(\tau) [- c_{4} - c_{3}(\tau) - \tilde{c}_2(\tau)] \PP \cb{N_\tau \leq x-1}  + e^{-\delta \tau} \lambda(\tau) \tilde{c}_2(\tau)  + e^{-\delta \tau}(c_{1}-\delta c_{4}) \sum_{k=0}^{x-1} \PP\cb{N_\tau \leq k} \\
	& = e^{-\delta \tau} \lambda(\tau) [- c_{4} - c_{3}(\tau)] \PP \cb{N_\tau \leq x-1} + e^{-\delta \tau} \lambda(\tau) \tilde{c}_2(\tau) \PP\cb{N_\tau\geq x}  + e^{-\delta \tau}(c_{1}-\delta c_{4}) \sum_{k=0}^{x-1} \PP\cb{N_\tau \leq k}  \\
	& = e^{-\delta \tau} \lambda(\tau) \Big( \PP\cb{N_\tau\geq x} \tilde{c}_2(\tau) - \PP \cb{N_\tau \leq x-1} [c_{3}(\tau) + c_{4}   ] \Big)  + e^{-\delta \tau}(c_{1}-\delta c_{4}) \sum_{k=0}^{x-1} \PP\cb{N_\tau \leq k},
	\end{align*}
	where the second equality uses Lemma \ref{Lemma: E[x-N(tau)^+] = sum P(N(tau) <= k)}.
\end{proof} 

\begin{proof}[Proof of \autoref{Proposition: Upper bound on tau^*}]
	We first note that $\PP\cb{N_\tau \geq x} = 1 - \PP\cb{N_\tau \leq x-1}$, so
	\begin{align*}
	& \PP\cb{N_\tau\geq x} \tilde{c}_2(\tau) \geq \PP \cb{N_\tau \leq x-1} [c_{3}(\tau) + c_{4}  ] \\
	\iff & \tilde{c}_2(\tau) \geq \PP\cb{N_\tau\leq x-1} [c_{3}(\tau) + \tilde{c}_2(\tau) + c_{4}  ] \\
	\impliedby & \tilde{c}_2(T) \geq \PP\cb{N_\tau\leq x-1} [c_{3}(\tau) + \tilde{c}_2(\tau) + c_{4}  ],
	\end{align*}
	where last implication is due to $\tilde{c}_2(.)$ being non-increasing. Since $\tau^{\text{ub}}$ satisfies inequality \eqref{Equation: Upper bound on tau^*}, it follows from Lemma \ref{Lemma: tau_ub} that
	\[
	\left.\frac{\partial \cC(x,\tau)}{\partial\tau} \right\vert_{\tau=\tau^{\text{ub}}} \geq 0.
	\]
	Therefore, it suffices to show that for any $\tau\geq\tau^{ub}$, $\frac{\partial \cC(x,\tau)}{\partial\tau} \geq 0$. To see this, we show that the function 
	$\tau\mapsto f(\tau) := \PP \cb{N_\tau \leq x-1} [c_{2}(\tau) + c_{4}]$
	is non-increasing. Taking the derivative of $f$ yields that
	\[
	\frac{\partial f(\tau)}{\partial \tau}  = -\lambda(\tau) \PP\cb{N_\tau = x-1} [c_{2}(\tau) + c_{4}] + \PP \cb{N_\tau \leq x-1} c_{2}^{\prime}(\tau).
	\]
	The first term is negative since it is assumed that $c_{2}(\tau) + c_{4}  \geq 0$ and that $\lambda(\tau) \geq 0$. The second term is negative since $\tilde{c}_2(\tau)$ is non-increasing. Therefore, $f$ is non-increasing.
\end{proof}

\begin{lem} \label{Lemma: for lower bound on tau^*}
	If $\lambda(\tau) \geq 1$ and
	\begin{equation*}
	\PP \cb{N_\tau \leq x-1} [c_{2}(\tau) + c_{4}  ] \geq x(c_{1}-\delta c_{4}) + \tilde{c}_2(0), 
	\end{equation*}
	then $\frac{\partial \cC(x,\tau)}{\partial \tau} \leq 0$.
\end{lem}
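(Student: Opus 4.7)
The plan is to reuse the explicit formula for $\partial\cC(x,\tau)/\partial\tau$ obtained in the proof of Lemma \ref{Lemma: tau_ub}, namely
\[
\frac{\partial \cC(x,\tau)}{\partial\tau}
= e^{-\delta\tau}\lambda(\tau)\Big(\PP\{N_\tau\geq x\}\tilde c_2(\tau)-\PP\{N_\tau\leq x-1\}[c_3(\tau)+c_4]\Big)
+ e^{-\delta\tau}(c_1-\delta c_4)\sum_{k=0}^{x-1}\PP\{N_\tau\leq k\},
\]
and to show that, under the stated hypothesis, the term inside the braces together with the last positive contribution is non-positive. Since $e^{-\delta\tau}>0$, it suffices to prove
\[
\lambda(\tau)\PP\{N_\tau\leq x-1\}[c_3(\tau)+c_4]\;\geq\;\lambda(\tau)\PP\{N_\tau\geq x\}\tilde c_2(\tau)+(c_1-\delta c_4)\sum_{k=0}^{x-1}\PP\{N_\tau\leq k\}.
\]

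The first step is to upper bound the right-hand side with more tractable quantities. I would bound the summation trivially by $x$ (each probability is at most one) and use $c_1-\delta c_4\geq 0$ from POS to conclude that $(c_1-\delta c_4)\sum_{k=0}^{x-1}\PP\{N_\tau\leq k\}\leq x(c_1-\delta c_4)$. Moreover, writing $\PP\{N_\tau\geq x\}=1-\PP\{N_\tau\leq x-1\}$ and invoking the NON-INC assumption on $\tilde c_2$ to get $\tilde c_2(\tau)\leq \tilde c_2(0)$, I obtain the cruder upper bound $\lambda(\tau)\tilde c_2(\tau)-\lambda(\tau)\PP\{N_\tau\leq x-1\}\tilde c_2(\tau)\leq \lambda(\tau)\tilde c_2(0)-\lambda(\tau)\PP\{N_\tau\leq x-1\}\tilde c_2(\tau)$.

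The second step is to combine these bounds with the hypothesis. Using the decomposition $c_2(\tau)+c_4=\tilde c_2(\tau)+c_3(\tau)+c_4$, the hypothesis \eqref{Equation: Lower bound on tau^*} rearranges to
\[
\PP\{N_\tau\leq x-1\}[c_3(\tau)+c_4]\;\geq\; x(c_1-\delta c_4)+\tilde c_2(0)-\PP\{N_\tau\leq x-1\}\tilde c_2(\tau).
\]
Multiplying this inequality by $\lambda(\tau)\geq 1$ and then using $\lambda(\tau)\cdot x(c_1-\delta c_4)\geq x(c_1-\delta c_4)$ (valid since $x(c_1-\delta c_4)\geq 0$ and $\lambda(\tau)\geq 1$) yields
\[
\lambda(\tau)\PP\{N_\tau\leq x-1\}[c_3(\tau)+c_4]\;\geq\; x(c_1-\delta c_4)+\lambda(\tau)\tilde c_2(0)-\lambda(\tau)\PP\{N_\tau\leq x-1\}\tilde c_2(\tau),
\]
which dominates the bound on the right-hand side derived in the first step. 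Chaining the two inequalities gives the desired conclusion.

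The main obstacle, as I see it, is recognizing which direction each monotonicity/positivity assumption needs to be applied: NON-INC is used through $\tilde c_2(\tau)\leq \tilde c_2(0)$ (to convert a $\tau$-dependent weight into the constant $\tilde c_2(0)$ appearing in the hypothesis), while $\lambda(\tau)\geq 1$ plays the dual role of allowing us to multiply the LHS of the hypothesis by $\lambda(\tau)$ without losing ground, and of absorbing the additive term $x(c_1-\delta c_4)$ that arose from bounding the sum $\sum_{k=0}^{x-1}\PP\{N_\tau\leq k\}$. Once these directions are fixed, the argument is essentially algebraic and mirrors the structure of the upper-bound proof.
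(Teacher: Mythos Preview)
Your proof is correct and follows essentially the same approach as the paper: both bound $\sum_{k=0}^{x-1}\PP\{N_\tau\leq k\}$ by $x$, use $\tilde c_2(\tau)\leq\tilde c_2(0)$ from NON-INC, and invoke $\lambda(\tau)\geq 1$ to absorb the $x(c_1-\delta c_4)$ term. The only cosmetic difference is that the paper starts from the intermediate form of the derivative in which the coefficient $-[c_2(\tau)+c_4]\PP\{N_\tau\leq x-1\}$ is kept intact (so no decomposition $c_2=\tilde c_2+c_3$ and subsequent recombination is needed), which makes the chain of implications a bit shorter; your route via $\PP\{N_\tau\geq x\}=1-\PP\{N_\tau\leq x-1\}$ reaches the same conclusion with one extra algebraic unpacking.
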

\begin{proof}
	Using the same steps in Lemma \ref{Lemma: tau_ub} yields 
	\begin{align*}
	\frac{\partial \cC(x,\tau)}{\partial \tau }
	= & e^{-\delta \tau} \lambda(\tau) \tilde{c}_2(\tau) + e^{-\delta \tau}(c_{1}-\delta c_{4}) \sum_{k=0}^{x-1} \PP\cb{N_\tau\leq k} - e^{-\delta \tau} \lambda(\tau) [c_{2}(\tau) + c_{4}] \PP \cb{N_\tau \leq x-1}.
	\end{align*}
	Moreover, it is possible to see that
	\begin{equation} \label{Equation: observation in Lemma: for lower bound on tau^*}
	\sum_{k=0}^{x-1} \PP\cb{N_\tau\leq k} \leq x \quad \text{and} \quad \tilde{c}_2(\tau) \leq \tilde{c}_2(0)
	\end{equation}
	since the function $\tilde{c}_2(.)$ is non-increasing. Therefore,
	\begin{align*}
	& \frac{\partial \cC(x,\tau)}{\partial \tau} \leq 0 \\
	& \iff e^{-\delta \tau} \lambda(\tau) \tilde{c}_2(\tau) + e^{-\delta \tau}(c_{1}-\delta c_{4}) \sum_{k=0}^{x-1} \PP\cb{N_\tau\leq k}  \leq e^{-\delta \tau} \lambda(\tau) [c_{2}(\tau) + c_{4}] \PP \cb{N_\tau \leq x-1} \\
	& \impliedby \lambda(\tau) \tilde{c}_2(0) + (c_{1}- \delta c_{4})x  \leq \lambda(\tau) [c_{2}(\tau) + c_{4}] \PP \cb{N_\tau \leq x-1} \quad\text{(by \eqref{Equation: observation in Lemma: for lower bound on tau^*})} \\
	& \impliedby \lambda(\tau) \tilde{c}_2(0) + \lambda(\tau) (c_{1}- \delta c_{4})x  \leq \lambda(\tau) [c_{2}(\tau) + c_{4}] \PP \cb{N_\tau \leq x-1} \quad(\text{since }\lambda\geq 1) \\
	& \iff \tilde{c}_2(0) + (c_{1}-\delta c_{4}) x  \leq [c_{2}(\tau) + c_{4}  ] \PP \cb{N_\tau \leq x-1}.
	\end{align*}
\end{proof}

\begin{proof}[Proof of \autoref{Proposition: Lower bound on tau^*}]
	It suffices to show that any $\tau\in[0,\tau^{lb}]$ satisfy the inequality in \eqref{Equation: Lower bound on tau^*}, so that $\frac{\partial \cC(x,\tau)}{\partial \tau} \leq 0$ due to Lemma \ref{Lemma: for lower bound on tau^*}. To achieve this, we first note that $\lambda$ is non-increasing, therefore, any $\tau\leq\tau^{lb}$ satisfies $\lambda(\tau)\geq 1$. Next, we show that the function
	\begin{equation*}
	\tau\mapsto g(\tau) := \PP \cb{N_\tau \leq x-1} [c_{2}(\tau) + c_{4}  ]
	\end{equation*}
	is a non-increasing function. Taking derivative of $g$ yields that
	\[
	\frac{\partial g(\tau)}{\partial \tau} = -\lambda(\tau) \PP \cb{N_\tau \leq x-1} [c_{2}(\tau) + c_{4}  ]  + \PP \cb{N_\tau \leq x-1} c_{2}^{\prime}(\tau)
	\]
	The first term is negative due to the assumptions that $c_{2}(\tau) + c_{4} \geq 0$ and that $\lambda(\tau) \geq 0$. The second term is negative since both $c_{3}$ and $\tilde{c}_2$ is non-increasing.
\end{proof}

\section{Proof of \autoref{Proposition: S is increasing in tau}}
\label{Section: Proof of Results in Subsubsection S is increasing in tau}

In this section, we prove two lemmata and Proposition \ref{Proposition: S is increasing in tau}. In the sequel, we use the following forms of $\cC(x,\tau)$, $\Delta_{x} \cC(x, \tau):=\cC(x+1, \tau)-\cC(x, \tau)$ and $\Delta_{x}^{2} \cC(x, \tau):=\Delta_{x} \cC(x+1, \tau)-\Delta_{x} \cC(x, \tau)$ shown by \cite{frenk2019exact} in relations (4)-(6), (14):
\begin{flalign} \label{Equation: C(x,tau)}
\cC(x,\tau) 
= & c_{4}x + \EE \bigg(\int_{0}^{\tau} e^{-\delta u} \lambda(u) [- c_{4} - c_{2}(u)] \PP \cb{N_u \leq x-1} du  \bigg) 
 + \int_{0}^{\tau} e^{-\delta u} \lambda(u) \tilde{c}_2(u)du \notag \\
& + (c_{1}-\delta c_{4}) \int_{0}^{\tau} e^{-\delta u} \EE [(x-N_u)^+] du
 + \int_{0}^{T} e^{-\delta u} c_{3}(u) \lambda(u)du,\\
\cC(0,\tau) = & \int_{0}^{\tau} e^{-\delta u} \lambda(u) \tilde{c}_2(u) du + \int_{0}^{T} e^{-\delta u} c_{3}(u) \lambda(u) du, \label{Equation: C(0,tau)} \\
\Delta_x \cC(x,\tau) 
= & c_{4} + \int_{0}^{\tau} e^{-\delta u} \lambda(u) [- c_{4} - c_{2}(u)] \PP\cb{N_u = x} du \notag \\
& + (c_{1}-\delta c_{4}) \int_{0}^{\tau} e^{-\delta u} \PP\cb{N_u \leq x} du, \label{Equation: Delta_xC(x,tau)} \\
\Delta_x^2\cC(x-1,\tau) 
= & e^{-\delta \tau} \big(c_{2}(\tau) + c_{4} \big) \PP\cb{N_\tau = x} 
+ \int_{0}^{\tau} e^{-\delta u} \big[ c_{1}-c_{2}^{\prime}(u) + \delta c_{2}(u) \big] \PP\cb{N_u = x} du \notag \\
& - \sum_{i\leq m,\, l_i\leq \tau} e^{-\delta l_i } \Delta c_{2}(l_i) \PP\cb{N_{l_i} = x}. \label{Equation: Delta_x2C(x,tau)}
\end{flalign}

Lemma \ref{Lemma: Delta_xC(0,tau) is non-increasing in tau}, Lemma \ref{Lemma: Second derivative of C(x,tau) is non-increasing in tau} and Proposition \ref{Proposition: S is increasing in tau} essentially utilize the idea that $\Delta_x \cC(x,\tau)$ is a decreasing function of $\tau$ and an increasing function of $x$ under the conditions of Proposition \ref{Proposition: S is increasing in tau}. Therefore, if $\tau$ increases, then $S(\tau)$ increases as well since it is the minimum $x$ value satisfying the first order condition, namely
\begin{equation} \label{Equation: S_tau}
S(\tau) = \min\cb{x\in\ZZ_+: \bar{c} + \Delta_x \cC(x,\tau)\geq 0}.
\end{equation}

\begin{lem} \label{Lemma: Delta_xC(0,tau) is non-increasing in tau}
	For every $\epsilon\in[0,T]$ and every $\tau\in[0,T]$ such that
	\[
	c_{1} \leq \lambda(\tau+\epsilon)c_{3}(\tau+\epsilon), 
	\]
	we have $\Delta_x \cC(0,\tau+\epsilon) < \Delta_x \cC(0,\tau)$.
\end{lem}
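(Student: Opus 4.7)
The plan is to specialize formula \eqref{Equation: Delta_xC(x,tau)} to $x=0$ and express the desired difference as a single integral over $[\tau,\tau+\epsilon]$, so that the claim reduces to a pointwise sign check on the integrand. Since $\PP\{N_u=0\}=\PP\{N_u\leq 0\}=e^{-\Lambda(u)}$, \eqref{Equation: Delta_xC(x,tau)} gives
\[
\Delta_x\cC(0,\tau)=c_{4}+\int_{0}^{\tau}e^{-\delta u}e^{-\Lambda(u)}\Bigl\{\lambda(u)[-c_{4}-c_{2}(u)]+c_{1}-\delta c_{4}\Bigr\}du,
\]
and subtracting the analogous expression for $\tau+\epsilon$ I would obtain
\[
\Delta_x\cC(0,\tau+\epsilon)-\Delta_x\cC(0,\tau)=\int_{\tau}^{\tau+\epsilon}e^{-\delta u}e^{-\Lambda(u)}\Bigl\{c_{1}-\delta c_{4}-\lambda(u)[c_{4}+c_{2}(u)]\Bigr\}du.
\]

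The task then reduces to showing that the bracket $\{\,\cdot\,\}$ is negative on $[\tau,\tau+\epsilon]$. First I would invoke NON-INC, which forces $\lambda$ and $c_{3}$ to be non-increasing, so that for every $u\in[\tau,\tau+\epsilon]$,
\[
\lambda(u)\,c_{3}(u)\geq\lambda(\tau+\epsilon)\,c_{3}(\tau+\epsilon)\geq c_{1},
\]
the second inequality being the hypothesis of the lemma. Using $c_{2}=\tilde{c}_{2}+c_{3}\geq c_{3}$ and the POS-assumption $c_{4}\geq 0$, I would chain
\[
\lambda(u)[c_{4}+c_{2}(u)]\geq\lambda(u)\,c_{2}(u)\geq\lambda(u)\,c_{3}(u)\geq c_{1},
\]
and since $\delta c_{4}\geq 0$ by POS, the bracketed integrand is $\leq 0$ everywhere on $[\tau,\tau+\epsilon]$.

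Strict negativity of the integral (for $\epsilon>0$) follows because $e^{-\delta u}e^{-\Lambda(u)}>0$ and at least one of the terms in the bracket is strictly negative on a set of positive measure: under POS either $c_{4}>0$ (in which case $-\lambda(u)c_{4}<0$ wherever $\lambda(u)>0$), or otherwise one checks that $\tilde{c}_{2}$ or $\lambda c_{3}-c_{1}$ cannot vanish identically on the interval without trivializing the problem. The one subtle point in the write-up will be treating this strict-versus-weak inequality cleanly; everything else is bookkeeping from \eqref{Equation: Delta_xC(x,tau)} together with the monotonicity assumptions NON-INC and the sign assumptions POS. No martingale machinery or stopping-time structure is needed here, since the claim is purely about the deterministic representation of $\Delta_x\cC(0,\tau)$.
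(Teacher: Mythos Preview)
Your proposal is correct and follows essentially the same route as the paper: both specialize \eqref{Equation: Delta_xC(x,tau)} to $x=0$, write $\Delta_x\cC(0,\tau+\epsilon)-\Delta_x\cC(0,\tau)$ as a single integral over $[\tau,\tau+\epsilon]$ with integrand $e^{-\delta u}\PP\{N_u=0\}\bigl(c_{1}-\delta c_{4}-\lambda(u)[c_{4}+c_{2}(u)]\bigr)$, and then use NON-INC, POS, and the hypothesis $c_{1}\le\lambda(\tau+\epsilon)c_{3}(\tau+\epsilon)$ to bound the bracket; the only cosmetic difference is that the paper splits the bracket as $\bigl(-\delta c_{4}-\lambda(u)[c_{4}+\tilde c_{2}(u)]\bigr)+\bigl(c_{1}-\lambda(u)c_{3}(u)\bigr)$ and declares the first piece strictly negative, which is the cleaner way to handle the strict-versus-weak issue you flag.
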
 
\begin{proof} 
	By using the expression for $\Delta_x\cC(x,\tau)$ in \eqref{Equation: Delta_xC(x,tau)}, we obtain
	\begin{align*}
	& \Delta_x \cC(0,\tau+\epsilon) - \Delta_x \cC(0,\tau) \\
	 &= \int_{\tau}^{\tau+\epsilon} e^{-\delta u}\lambda(u) \sqb{-c_{4} - c_{2}(u)} \PP\cb{N_u=0} du
	  (c_{1}-\delta c_{4}) \int_{\tau}^{\tau+\epsilon} e^{-\delta u} \PP\cb{N_u= 0} du \\
	 &= \int_{\tau}^{\tau+\epsilon} e^{-\delta u}\PP\cb{N_u=0} \bigg( c_{1}-\delta c_{4} + \lambda(u) [- c_{4} - c_{3}(u) - \tilde{c}_2(u) ] \bigg) du\\
	 &= \int_{\tau}^{\tau+\epsilon} e^{-\delta u}\PP\cb{N_u=0}\negthinspace \Big(\negthinspace -\delta c_{4} + \lambda(u) [ \underbrace{- c_{4}  - \tilde{c}_2(u)}_{< 0} ] \Big) du
	 \negthinspace +\negthinspace \int_{\tau}^{\tau+\epsilon} e^{-\delta u}\PP\cb{N_u=0} \negthinspace\Big( \negthinspace \underbrace{c_{1}- \lambda(u) c_{3}(u)}_{\leq 0} \Big) du\\
	&<  0,
	\end{align*}
	where the inequality $- c_{4} - \tilde{c}_2(u)<0$ holds for every $u\in[\tau,\tau+\epsilon]$ since $c_{4}$ and $\tilde{c}_2(u)$ are positive. Moreover, the inequality $c_{1}-\lambda(u)c_{3}(u)\leq 0$ holds for every $u\in[\tau,\tau+\epsilon]$ since
	\[
	c_{1} \leq \lambda(\tau+\epsilon)c_{3}(\tau+\epsilon) 
	\leq  \lambda(u)c_{3}(u),
	\]
	where first inequality is due to the condition of the lemma and the second inequality is because $\lambda$ and $c_{3}$ are non-increasing.
\end{proof}

The next lemma is helpful while stating in Proposition \ref{Proposition: S is increasing in tau} that if expected total demand exceeds the order amount and cost rate of outside source does not decline sufficiently, then the order amount should increase.
\begin{lem} \label{Lemma: Second derivative of C(x,tau) is non-increasing in tau}
	For every $x\in\ZZ_+$, every $\epsilon\in[0,T]$ and every $\tau_2\in[0,T]$ such that
	\begin{align*}
	& (i)\, x < \Lambda(\tau_2), 
	& (ii)\, c_{1} \leq \sqb{\frac{\Lambda(u) - x}{\Lambda(u)}} \lambda(u)c_{3}(u) \text{ for all } u\in[\tau_2,\tau_2+\epsilon],
	\end{align*}
	we have 
	\begin{equation} \label{Equation: Delta_x^2 C(x,tau+epsilon) < Delta_x^2C(x,tau)}
	\Delta_x^2\cC(x-1,\tau_2+\epsilon) < \Delta_x^2\cC(x-1,\tau_2).
	\end{equation}
\end{lem}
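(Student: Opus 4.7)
The plan is to show the inequality by differentiating $\tau\mapsto \Delta_x^2\cC(x-1,\tau)$ using the explicit formula \eqref{Equation: Delta_x2C(x,tau)}, and then integrating the derivative from $\tau_2$ to $\tau_2+\epsilon$. First I would work on a subinterval on which $c_2$ is $C^1$ and apply the product rule to the boundary term $e^{-\delta\tau}(c_2(\tau)+c_4)\PP\{N_\tau=x\}$, using the standard identity
\[
\frac{d}{d\tau}\PP\{N_\tau=x\}=\lambda(\tau)\bigl[\PP\{N_\tau=x-1\}-\PP\{N_\tau=x\}\bigr].
\]
Combining this with the derivative of the integral term (whose integrand involves $c_1-c_2'+\delta c_2$) I expect the $c_2'$ and $\delta c_2$ contributions to cancel, leaving
\[
\frac{\partial}{\partial \tau}\Delta_x^2\cC(x-1,\tau) = e^{-\delta\tau}\PP\{N_\tau=x\}(c_1-\delta c_4)+e^{-\delta\tau}(c_2(\tau)+c_4)\lambda(\tau)\bigl[\PP\{N_\tau=x-1\}-\PP\{N_\tau=x\}\bigr].
\]
Invoking the Poisson mass-ratio $\PP\{N_\tau=x-1\}/\PP\{N_\tau=x\}=x/\Lambda(\tau)$, which is available because condition (i) and monotonicity of $\Lambda$ guarantee $\Lambda(\tau)\geq \Lambda(\tau_2)>x>0$ on $[\tau_2,\tau_2+\epsilon]$, I expect the derivative to collapse to
\[
\frac{\partial}{\partial\tau}\Delta_x^2\cC(x-1,\tau)=e^{-\delta\tau}\PP\{N_\tau=x\}\left[c_1-\delta c_4-\frac{\Lambda(\tau)-x}{\Lambda(\tau)}\lambda(\tau)\bigl(c_2(\tau)+c_4\bigr)\right].
\]

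Next I would close the argument by checking two facts. First, the derivative above is non-positive on $[\tau_2,\tau_2+\epsilon]$: by condition (ii) we have $c_1\leq \frac{\Lambda(u)-x}{\Lambda(u)}\lambda(u)c_3(u)$, while $c_2(u)\geq c_3(u)$ (the standing assumption at the end of Subsection \ref{Subsection: Representation of Expected Costs}) and $c_4\geq 0$ give $c_2(u)+c_4\geq c_3(u)$, so with $\delta c_4\geq 0$ the bracket is $\leq -\delta c_4-\frac{\Lambda(u)-x}{\Lambda(u)}\lambda(u)[\tilde c_2(u)+c_4]\leq 0$. Under the standing positivity one of these chain inequalities is strict (e.g.\ because $\tilde c_2$ or $c_4$ is positive), which upgrades the inequality to a strict one. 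Second, $\PP\{N_u=x\}>0$ for all $u\in[\tau_2,\tau_2+\epsilon]$ by condition (i), so the integrand of the resulting representation
\[
\Delta_x^2\cC(x-1,\tau_2+\epsilon)-\Delta_x^2\cC(x-1,\tau_2)=\int_{\tau_2}^{\tau_2+\epsilon}\frac{\partial}{\partial u}\Delta_x^2\cC(x-1,u)\,du
\]
is strictly negative on a set of positive measure. This yields \eqref{Equation: Delta_x^2 C(x,tau+epsilon) < Delta_x^2C(x,tau)}.

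The main obstacle I anticipate is the bookkeeping at the finitely many jump points of $c_2$ inside $(\tau_2,\tau_2+\epsilon]$. At each such $l_i$ the boundary term $e^{-\delta\tau}(c_2(\tau)+c_4)\PP\{N_\tau=x\}$ jumps by $e^{-\delta l_i}\Delta c_2(l_i)\PP\{N_{l_i}=x\}$, while the explicit sum in \eqref{Equation: Delta_x2C(x,tau)} contributes exactly the opposite jump $-e^{-\delta l_i}\Delta c_2(l_i)\PP\{N_{l_i}=x\}$. One must verify this cancellation carefully so that $\tau\mapsto \Delta_x^2\cC(x-1,\tau)$ is genuinely continuous and the fundamental theorem of calculus assembles the piecewise-smooth derivative computations into the single integral above; once this is in place, all the remaining work is the algebraic simplification and the two one-line comparisons outlined in the previous paragraphs.
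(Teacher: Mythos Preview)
Your proposal is correct and follows essentially the same route as the paper: differentiate the boundary term in \eqref{Equation: Delta_x2C(x,tau)} via the product rule, use the Poisson identity to rewrite the derivative of $\PP\{N_\tau=x\}$ as $-\lambda(\tau)\PP\{N_\tau=x\}\bigl[1-x/\Lambda(\tau)\bigr]$, cancel the $c_2'$ and $\delta c_2$ contributions against the integral term, and arrive at exactly the integrand $e^{-\delta u}\PP\{N_u=x\}\bigl[(c_1-\delta c_4)-\lambda(u)\tfrac{\Lambda(u)-x}{\Lambda(u)}(c_2(u)+c_4)\bigr]$ that the paper obtains. Your anticipated jump bookkeeping is also handled in the paper in the same way you describe: the jump of the boundary term at each $l_i$ is cancelled by the corresponding summand $-e^{-\delta l_i}\Delta c_2(l_i)\PP\{N_{l_i}=x\}$, so the difference $\Delta_x^2\cC(x-1,\tau_2+\epsilon)-\Delta_x^2\cC(x-1,\tau_2)$ reduces to the single integral you wrote.
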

\begin{proof}
	For the non-homogeneous Poisson process $N$ with right-continuous intensity function $\lambda$, the right-directional derivative of the function $u\mapsto\psi(u) = \PP\cb{N_u=x}$ exists and it is given by
	\begin{align*}
	\psi^\prime(u+) := & \lim_{\epsilon\downarrow 0} \frac{1}{\epsilon} \sqb{\psi(u+\epsilon) - \psi(u)} \\
	= & -\lambda(u) e^{-\Lambda (u)} \frac{\Lambda(u)^x}{x!} + e^{-\Lambda(u)} \frac{\Lambda(u)^{x-1}}{(x-1)!}\lambda(u) 
	 = -\lambda(u) \PP\cb{N_u = x} \bigg[1- \frac{x}{\Lambda(u)}\bigg].
	\end{align*}
	Moreover, we observe that the function $\Lambda$ is strictly increasing and
	\begin{equation} \label{Relation: Derive of P(N_u=x) is negative}
	\psi^\prime(u+) < 0 \text{ for every } u\in[0,T] \text{ such that } \Lambda(u) > x.  \\
	\end{equation}
	After applying chain rule to the function
	\[
	\tau\rightarrow \underbrace{e^{-\delta\tau}}_{\circled{1} }  \underbrace{(c_{2}(\tau) +c_{4})}_{\circled{2}} \underbrace{\PP\cb{N_\tau=x}}_{\circled{3}}
	\]
	in \eqref{Equation: Delta_x2C(x,tau)}, we obtain
	\begin{align*}
	& \Delta_x^2\cC(x-1,\tau_2+\epsilon) - \Delta_x^2\cC(x-1,\tau_2) \\
	= & - \int_{\tau_2}^{\tau_2+\epsilon} \underbrace{\delta e^{-\delta u}}_{\circled{1}} \big( c_{2}(u) + c_{4}   \big) \PP\cb{N_u = x} du \\
	& + \int_{\tau_2}^{\tau_2+\epsilon} e^{-\delta u} \underbrace{c_{2}^{\prime}(u)}_{\circled{2}} \PP\cb{N_u=x} du + \sum_{i\leq m,\, l_i\leq \tau} e^{-\delta l_i } \Delta c_{2}(l_i) \PP\{ N_{l_i} = x\} \\
	& - \int_{\tau_2}^{\tau_2+\epsilon} e^{-\delta u} \big( c_{2}(u) + c_{4} \big) \underbrace{\lambda(u) \PP\cb{N_u = x} \bigg[ 1-\frac{x}{\Lambda(u)} \bigg]}_{\circled{3}}  du \\
	& + \int_{\tau_2}^{\tau_2+\epsilon} e^{-\delta u} \big[ c_{1}-c_{2}^{\prime}(u) + \delta c_{2}(u) \big] \PP\cb{N_u = x} du
	  - \sum_{i\leq m,\, \tau_2\leq l_i\leq \tau_2+\epsilon} e^{-\delta l_i } \Delta c_{2}(l_i) \PP\{ N_{l_i} = x\}. 
	\end{align*}
	Notice that all the integrals include the expression $e^{-\delta u}\PP\cb{N_u=x}$. Grouping them gives
	\begin{align*}
	& \Delta_x^2\cC(x-1,\tau_2+\epsilon) - \Delta_x^2\cC(x-1,\tau_2) \\
	= & \int_{\tau_2}^{\tau_2+\epsilon} e^{-\delta u} \PP\cb{N_u = x} \\
	& \quad\times \bigg[ -\delta \big( \underbrace{c_{2}(u)} + c_{4} \big) + \underbrace{c_{2}^{\prime}(u)} -\lambda(u) \bigg[ 1-\frac{x}{\Lambda(\tau)} \bigg] (c_{2}(u) + c_{4}  ) \\
	& \quad\quad + c_{1}- \underbrace{c_{2}^{\prime}(u)} + \underbrace{\delta c_{2}(u)})  \bigg] du \quad\text{ (underbraced terms cancel each other)}\\
	= & \int_{\tau_2}^{\tau_2+\epsilon} e^{-\delta u} \PP\cb{N_u = x} \bigg[ (c_{1}-\delta c_{4}) - \lambda(u) \bigg[\frac{\Lambda(u)-x}{\Lambda(u)}\bigg] (c_{2}(u) + c_{4}  ) \bigg] du.
	\end{align*}
	Next, after separating the remaining terms, we can see that
	\begin{align}
	& \Delta_x^2\cC(x-1,\tau_2+\epsilon) - \Delta_x^2\cC(x-1,\tau_2) \notag \\
	= & \int_{\tau_2}^{\tau_2+\epsilon} e^{-\delta u} \PP\cb{N_u = x} (-\delta c_{4}) du \notag \\
	& + \int_{\tau_2}^{\tau_2+\epsilon} e^{-\delta u} \PP\cb{N_u = x} \bigg[- \underbrace{\lambda(u)}_{\geq \lambda(\tau_2+\epsilon) } \bigg[\frac{\Lambda(u)-x}{\Lambda(u)}\bigg] c_{4} \bigg] du \quad \text{ ($\lambda$ is non-increasing) } \notag \\
	& + \int_{\tau_2}^{\tau_2+\epsilon} e^{-\delta u} \PP\cb{N_u = x}\sqb{- \underbrace{\lambda(u)}_{\geq \lambda(\tau_2+\epsilon)} \sqb{\frac{\Lambda(u)-x}{\Lambda(u)}} (\underbrace{\tilde{c}_2(u)  }_{\geq \tilde{c}_2(T) } ) } du \quad \text{ ($\tilde{c}_2$ is non-increasing)} \notag \\
	& + \int_{\tau_2}^{\tau_2+\epsilon} e^{-\delta u} \underbrace{\PP\cb{N_u = x}}_{=\psi(u)\geq \psi(\tau_2+\epsilon)} \sqb{c_{1}-\lambda(u) \sqb{\frac{\Lambda(u)-x}{\Lambda(u)}} c_{3}(u)} du \notag \\
	& \quad\text{ (Relation \eqref{Relation: Derive of P(N_u=x) is negative} and condition $(i)$)} \notag \\ 
	\leq & - \int_{\tau_2}^{\tau_2+\epsilon} e^{-\delta u} \PP\cb{N_u = x} (\delta c_{4}) du \notag\\
	& - \lambda(\tau_2+\epsilon)c_{4} \int_{\tau_2}^{\tau_2+\epsilon} e^{-\delta u} \PP\cb{N_u = x} \bigg[\frac{\Lambda(u)-x}{\Lambda(u)}\bigg] du \notag\\
	& - \lambda(\tau_2+\epsilon)(\tilde{c}_2(T) ) \int_{\tau_2}^{\tau_2+\epsilon} e^{-\delta u} \PP\cb{N_u = x} \sqb{\frac{\Lambda(u)-x}{\Lambda(u)}} du \notag\\
	& + e^{-\delta (\tau_2+\epsilon)} \PP\cb{N_{\tau_2+\epsilon} = x} \int_{\tau_2}^{\tau_2+\epsilon}  \sqb{c_{1}-\lambda(u) \sqb{\frac{\Lambda(u)-x}{\Lambda(u)}} c_{3}(u)} du \label{Relation: Last inequality after chain rule} \\
	< & 0. \notag
	\end{align}
	In \eqref{Relation: Last inequality after chain rule}, the first and second terms are negative due to the assumption $c_{4}\in\RR_+$ and condition $(i)$ of the lemma. The third term is negative since $\tilde{c}_2(T)\geq 0$ and condition $(i)$ of the lemma. The last term is negative due to condition $(ii)$ of the lemma. This concludes the proof. 
\end{proof}

\begin{proof}[Proof of \autoref{Proposition: S is increasing in tau}]
	The function $x \mapsto \cC(x,\tau)$ being discrete-convex implies that the function $x \mapsto \Delta_x \cC(x,\tau)$ is non-decreasing. Moreover, by the definition of $S(\tau)$, the first order condition in equation \eqref{Equation: S_tau} has to be satisfied by $S(\tau_2)$ and $\tau_2$ as well as $S(\tau_2+\epsilon)$ and $\tau_2+\epsilon$, meaning that
	\[
	\bar{c} + \Delta_x\cC(S(\tau_2),\tau_2) \geq 0 \text{ and } \bar{c} + \Delta_x \cC(S(\tau_2+\epsilon),\tau_2+\epsilon) \geq 0.
	\]
	If we can show that 
	\begin{equation} \label{Equation: Delta_x C(S(tau_2),tau_2 epsilon) less C(S(tau_2),tau_2)}
	\Delta_x \cC(S(\tau_2), \tau_2+\epsilon) < \Delta_x \cC(S(\tau_2),\tau_2),
	\end{equation}
	then $S(\tau_2)\leq S(\tau_2+\epsilon)$ must hold. To show \eqref{Equation: Delta_x C(S(tau_2),tau_2 epsilon) less C(S(tau_2),tau_2)}, we proceed in three steps. First,  condition $(iv)$ implies that
	\[
	c_{1} \underbrace{\leq}_{(iv)} \sqb{\frac{\Lambda(\tau_2+\epsilon)-S(\tau_1)}{\Lambda(\tau_2+\epsilon)} \lambda(\tau_2+\epsilon) c_{3}(\tau_2+\epsilon) } \leq \lambda(\tau_2+\epsilon) c_{3}(\tau_2+\epsilon).
	\]
	By using Lemma \ref{Lemma: Delta_xC(0,tau) is non-increasing in tau}, we obtain
	\[
	\Delta_x\cC(0,\tau_2+\epsilon) < \Delta_x\cC(0,\tau_2).
	\] 
	Next, observe from conditions $(iv)$ and $(ii)$ that for every $u\in[\tau_2,\tau_2+\epsilon]$,
	\[
	c_{1} \underbrace{\leq}_{(iv)} \sqb{\frac{\Lambda(u) - S(\tau_1)}{\Lambda(u)}} \lambda(u)c_{3}(u) 
	\underbrace{\leq}_{(ii)} \sqb{\frac{\Lambda(u) - S(\tau_2)}{\Lambda(u)}} \lambda(u)c_{3}(u).
	\]
	By using Lemma \ref{Lemma: Second derivative of C(x,tau) is non-increasing in tau}, we obtain
	\[
	\Delta_x^2 \cC(S(\tau_2)-1,\tau_2+\epsilon) < \Delta_x^2 \cC(S(\tau_2)-1,\tau_2)
	\]
	and similarly, for all $x\in\cb{1,2,\dots,S(\tau_2)-1}$, we have
	\[
	\Delta_x^2 \cC(x-1,\tau_2+\epsilon) < \Delta_x^2 \cC(x-1,\tau_2).
	\]
	Finally, we obtain
	\begin{align*}
	\Delta_x \cC(S(\tau_2), \tau_2+\epsilon) 
	= & \Delta_x \cC(0,\tau_2 + \epsilon) + \sum_{x=0}^{S(\tau_2)-1} \Delta_x^2 \cC(x,\tau_2+\epsilon) \\
	< & \Delta_x \cC(0,\tau_2) + \sum_{x=0}^{S(\tau_2)-1} \Delta_x^2 \cC(x,\tau_2)
	=\Delta_x \cC(S(\tau_2),\tau_2).
	\end{align*}
\end{proof}

\section{Numbering for the Parameter Settings}\label{Section: Number of Parameter Settings}
This section assigns a number for each parameter setting where the parameters take values in the sets presented in Table \ref{Table: Our numerical Values}. The Table \ref{Table: Numbering for Parameter Settings} below show the assigned numbers. Also recall that $\bar{c}=100$, $c_1=0.01\bar{c}$ and $\bar{c}_3=2\bar{c}$. We present the setup cost $K$ and initial inventory $x$ values alongside the related result.

\begin{table}[h!]
	\resizebox{\textwidth}{!}{
	\begin{tabular}{|ccccccc|ccccccc|ccccccc|}
		\hline
		\# & $\lambda$ & $T$   & $c_4$ & $\gamma$    & $\delta$   & $\bar{c}_2$ & \# & $\lambda$ & $T$   & $c_4$ & $\gamma$    & $\delta$   & $\bar{c}_2$ & \# & $\lambda$ & $T$   & $c_4$ & $\gamma$    & $\delta$   & $\bar{c}_2$ \\ \hline
		1  & Conv   & 50  & 25   & 0.01     & 0.005    & 200                         & 43 & Conc   & 50  & -25  & 0.01     & $10^{-6}$ & 200                         & 85  & Lin    & 100 & 25   & $10^{-6}$ & 0.005    & 200                         \\
		2  & Conv   & 50  & 25   & 0.01     & 0.005    & 1000                        & 44 & Conc   & 50  & -25  & 0.01     & $10^{-6}$ & 1000                        & 86  & Lin    & 100 & 25   & $10^{-6}$ & 0.005    & 1000                        \\
		3  & Conv   & 50  & 25   & 0.01     & $10^{-6}$ & 200                         & 45 & Conc   & 50  & -25  & $10^{-6}$ & 0.005    & 200                         & 87  & Lin    & 100 & 25   & $10^{-6}$ & $10^{-6}$ & 200                         \\
		4  & Conv   & 50  & 25   & 0.01     & $10^{-6}$ & 1000                        & 46 & Conc   & 50  & -25  & $10^{-6}$ & 0.005    & 1000                        & 88  & Lin    & 100 & 25   & $10^{-6}$ & $10^{-6}$ & 1000                        \\
		5  & Conv   & 50  & 25   & $10^{-6}$ & 0.005    & 200                         & 47 & Conc   & 50  & -25  & $10^{-6}$ & $10^{-6}$ & 200                         & 89  & Lin    & 100 & -25  & 0.01     & 0.005    & 200                         \\
		6  & Conv   & 50  & 25   & $10^{-6}$ & 0.005    & 1000                        & 48 & Conc   & 50  & -25  & $10^{-6}$ & $10^{-6}$ & 1000                        & 90  & Lin    & 100 & -25  & 0.01     & 0.005    & 1000                        \\
		7  & Conv   & 50  & 25   & $10^{-6}$ & $10^{-6}$ & 200                         & 49 & Conc   & 100 & 25   & 0.01     & 0.005    & 200                         & 91  & Lin    & 100 & -25  & 0.01     & $10^{-6}$ & 200                         \\
		8  & Conv   & 50  & 25   & $10^{-6}$ & $10^{-6}$ & 1000                        & 50 & Conc   & 100 & 25   & 0.01     & 0.005    & 1000                        & 92  & Lin    & 100 & -25  & 0.01     & $10^{-6}$ & 1000                        \\
		9  & Conv   & 50  & -25  & 0.01     & 0.005    & 200                         & 51 & Conc   & 100 & 25   & 0.01     & $10^{-6}$ & 200                         & 93  & Lin    & 100 & -25  & $10^{-6}$ & 0.005    & 200                         \\
		10 & Conv   & 50  & -25  & 0.01     & 0.005    & 1000                        & 52 & Conc   & 100 & 25   & 0.01     & $10^{-6}$ & 1000                        & 94  & Lin    & 100 & -25  & $10^{-6}$ & 0.005    & 1000                        \\
		11 & Conv   & 50  & -25  & 0.01     & $10^{-6}$ & 200                         & 53 & Conc   & 100 & 25   & $10^{-6}$ & 0.005    & 200                         & 95  & Lin    & 100 & -25  & $10^{-6}$ & $10^{-6}$ & 200                         \\
		12 & Conv   & 50  & -25  & 0.01     & $10^{-6}$ & 1000                        & 54 & Conc   & 100 & 25   & $10^{-6}$ & 0.005    & 1000                        & 96  & Lin    & 100 & -25  & $10^{-6}$ & $10^{-6}$ & 1000                        \\
		13 & Conv   & 50  & -25  & $10^{-6}$ & 0.005    & 200                         & 55 & Conc   & 100 & 25   & $10^{-6}$ & $10^{-6}$ & 200                         & 97  & Cons   & 50  & 25   & 0.01     & 0.005    & 200                         \\
		14 & Conv   & 50  & -25  & $10^{-6}$ & 0.005    & 1000                        & 56 & Conc   & 100 & 25   & $10^{-6}$ & $10^{-6}$ & 1000                        & 98  & Cons   & 50  & 25   & 0.01     & 0.005    & 1000                        \\
		15 & Conv   & 50  & -25  & $10^{-6}$ & $10^{-6}$ & 200                         & 57 & Conc   & 100 & -25  & 0.01     & 0.005    & 200                         & 99  & Cons   & 50  & 25   & 0.01     & $10^{-6}$ & 200                         \\
		16 & Conv   & 50  & -25  & $10^{-6}$ & $10^{-6}$ & 1000                        & 58 & Conc   & 100 & -25  & 0.01     & 0.005    & 1000                        & 100 & Cons   & 50  & 25   & 0.01     & $10^{-6}$ & 1000                        \\
		17 & Conv   & 100 & 25   & 0.01     & 0.005    & 200                         & 59 & Conc   & 100 & -25  & 0.01     & $10^{-6}$ & 200                         & 101 & Cons   & 50  & 25   & $10^{-6}$ & 0.005    & 200                         \\
		18 & Conv   & 100 & 25   & 0.01     & 0.005    & 1000                        & 60 & Conc   & 100 & -25  & 0.01     & $10^{-6}$ & 1000                        & 102 & Cons   & 50  & 25   & $10^{-6}$ & 0.005    & 1000                        \\
		19 & Conv   & 100 & 25   & 0.01     & $10^{-6}$ & 200                         & 61 & Conc   & 100 & -25  & $10^{-6}$ & 0.005    & 200                         & 103 & Cons   & 50  & 25   & $10^{-6}$ & $10^{-6}$ & 200                         \\
		20 & Conv   & 100 & 25   & 0.01     & $10^{-6}$ & 1000                        & 62 & Conc   & 100 & -25  & $10^{-6}$ & 0.005    & 1000                        & 104 & Cons   & 50  & 25   & $10^{-6}$ & $10^{-6}$ & 1000                        \\
		21 & Conv   & 100 & 25   & $10^{-6}$ & 0.005    & 200                         & 63 & Conc   & 100 & -25  & $10^{-6}$ & $10^{-6}$ & 200                         & 105 & Cons   & 50  & -25  & 0.01     & 0.005    & 200                         \\
		22 & Conv   & 100 & 25   & $10^{-6}$ & 0.005    & 1000                        & 64 & Conc   & 100 & -25  & $10^{-6}$ & $10^{-6}$ & 1000                        & 106 & Cons   & 50  & -25  & 0.01     & 0.005    & 1000                        \\
		23 & Conv   & 100 & 25   & $10^{-6}$ & $10^{-6}$ & 200                         & 65 & Lin    & 50  & 25   & 0.01     & 0.005    & 200                         & 107 & Cons   & 50  & -25  & 0.01     & $10^{-6}$ & 200                         \\
		24 & Conv   & 100 & 25   & $10^{-6}$ & $10^{-6}$ & 1000                        & 66 & Lin    & 50  & 25   & 0.01     & 0.005    & 1000                        & 108 & Cons   & 50  & -25  & 0.01     & $10^{-6}$ & 1000                        \\
		25 & Conv   & 100 & -25  & 0.01     & 0.005    & 200                         & 67 & Lin    & 50  & 25   & 0.01     & $10^{-6}$ & 200                         & 109 & Cons   & 50  & -25  & $10^{-6}$ & 0.005    & 200                         \\
		26 & Conv   & 100 & -25  & 0.01     & 0.005    & 1000                        & 68 & Lin    & 50  & 25   & 0.01     & $10^{-6}$ & 1000                        & 110 & Cons   & 50  & -25  & $10^{-6}$ & 0.005    & 1000                        \\
		27 & Conv   & 100 & -25  & 0.01     & $10^{-6}$ & 200                         & 69 & Lin    & 50  & 25   & $10^{-6}$ & 0.005    & 200                         & 111 & Cons   & 50  & -25  & $10^{-6}$ & $10^{-6}$ & 200                         \\
		28 & Conv   & 100 & -25  & 0.01     & $10^{-6}$ & 1000                        & 70 & Lin    & 50  & 25   & $10^{-6}$ & 0.005    & 1000                        & 112 & Cons   & 50  & -25  & $10^{-6}$ & $10^{-6}$ & 1000                        \\
		29 & Conv   & 100 & -25  & $10^{-6}$ & 0.005    & 200                         & 71 & Lin    & 50  & 25   & $10^{-6}$ & $10^{-6}$ & 200                         & 113 & Cons   & 100 & 25   & 0.01     & 0.005    & 200                         \\
		30 & Conv   & 100 & -25  & $10^{-6}$ & 0.005    & 1000                        & 72 & Lin    & 50  & 25   & $10^{-6}$ & $10^{-6}$ & 1000                        & 114 & Cons   & 100 & 25   & 0.01     & 0.005    & 1000                        \\
		31 & Conv   & 100 & -25  & $10^{-6}$ & $10^{-6}$ & 200                         & 73 & Lin    & 50  & -25  & 0.01     & 0.005    & 200                         & 115 & Cons   & 100 & 25   & 0.01     & $10^{-6}$ & 200                         \\
		32 & Conv   & 100 & -25  & $10^{-6}$ & $10^{-6}$ & 1000                        & 74 & Lin    & 50  & -25  & 0.01     & 0.005    & 1000                        & 116 & Cons   & 100 & 25   & 0.01     & $10^{-6}$ & 1000                        \\
		33 & Conc   & 50  & 25   & 0.01     & 0.005    & 200                         & 75 & Lin    & 50  & -25  & 0.01     & $10^{-6}$ & 200                         & 117 & Cons   & 100 & 25   & $10^{-6}$ & 0.005    & 200                         \\
		34 & Conc   & 50  & 25   & 0.01     & 0.005    & 1000                        & 76 & Lin    & 50  & -25  & 0.01     & $10^{-6}$ & 1000                        & 118 & Cons   & 100 & 25   & $10^{-6}$ & 0.005    & 1000                        \\
		35 & Conc   & 50  & 25   & 0.01     & $10^{-6}$ & 200                         & 77 & Lin    & 50  & -25  & $10^{-6}$ & 0.005    & 200                         & 119 & Cons   & 100 & 25   & $10^{-6}$ & $10^{-6}$ & 200                         \\
		36 & Conc   & 50  & 25   & 0.01     & $10^{-6}$ & 1000                        & 78 & Lin    & 50  & -25  & $10^{-6}$ & 0.005    & 1000                        & 120 & Cons   & 100 & 25   & $10^{-6}$ & $10^{-6}$ & 1000                        \\
		37 & Conc   & 50  & 25   & $10^{-6}$ & 0.005    & 200                         & 79 & Lin    & 50  & -25  & $10^{-6}$ & $10^{-6}$ & 200                         & 121 & Cons   & 100 & -25  & 0.01     & 0.005    & 200                         \\
		38 & Conc   & 50  & 25   & $10^{-6}$ & 0.005    & 1000                        & 80 & Lin    & 50  & -25  & $10^{-6}$ & $10^{-6}$ & 1000                        & 122 & Cons   & 100 & -25  & 0.01     & 0.005    & 1000                        \\
		39 & Conc   & 50  & 25   & $10^{-6}$ & $10^{-6}$ & 200                         & 81 & Lin    & 100 & 25   & 0.01     & 0.005    & 200                         & 123 & Cons   & 100 & -25  & 0.01     & $10^{-6}$ & 200                         \\
		40 & Conc   & 50  & 25   & $10^{-6}$ & $10^{-6}$ & 1000                        & 82 & Lin    & 100 & 25   & 0.01     & 0.005    & 1000                        & 124 & Cons   & 100 & -25  & 0.01     & $10^{-6}$ & 1000                        \\
		41 & Conc   & 50  & -25  & 0.01     & 0.005    & 200                         & 83 & Lin    & 100 & 25   & 0.01     & $10^{-6}$ & 200                         & 125 & Cons   & 100 & -25  & $10^{-6}$ & 0.005    & 200                         \\
		42 & Conc   & 50  & -25  & 0.01     & 0.005    & 1000                        & 84 & Lin    & 100 & 25   & 0.01     & $10^{-6}$ & 1000                        & 126 & Cons   & 100 & -25  & $10^{-6}$ & 0.005    & 1000                        \\
		&        &     &      &          &          &                             &    &        &     &      &          &          &                             & 127 & Cons   & 100 & -25  & $10^{-6}$ & $10^{-6}$ & 200                         \\
		&        &     &      &          &          &                             &    &        &     &      &          &          &                             & 128 & Cons   & 100 & -25  & $10^{-6}$ & $10^{-6}$ & 1000                        \\ \hline
	\end{tabular}}
	\caption{\label{Table: Numbering for Parameter Settings} Numbers for each parameter setting. The columns with the symbol \# show the assigned number. An abbreviation is used when $\lambda$ is convex (Conv), concave (Conv), linear (Lin), or constant (Cons).}
\end{table}

\newpage
\bibliographystyle{named}
\singlespacing

\end{document}